\def \lhat {\hat{l}}
\def \lprime {l^{'}}
 \def\Par{{\mathbf{\mathcal{P}} }}
 \def \bmin {\beta_{\min}}
 \def \bmax {\beta_{\max}}
 \def \gmin {\gamma_{\min}}
 \def \gmax {\gamma_{\max}}
 \def \lmin {l_{\min}}
 \def \lmax {l_{\max}} 
 \def \gtilde {\tilde{g}}
  \def \ftilde {\tilde{f}}
 \def \lhat {\hat{l}}
 \def \lstar {l^*}
 \def \kprime {k^{'}}
 \def\Par{{\mathbf{\mathcal{P}} }}
\begin{document}
%\doublespacing
%\small
\begin{frontmatter}
\title{Optimal Adaptive Inference in Random Design Binary Regression}
\runtitle{Binary regression}
%\thankstext{T1}{Footnote to the title with the `thankstext' command.}

\begin{aug}
	\author{\fnms{Rajarshi} \snm{Mukherjee}\ead[label=e1]{rmukherj@stanford.edu}}
\and
\author{\fnms{Subhabrata} \snm{Sen}\ead[label=e2]{ ssen90@stanford.edu}}
%	\thankstext{t1}{} %Stein Fellow, Department of Statistics, Stanford University}
%	\thankstext{t2}{} %Department of Statistics, Stanford University}
	
	\affiliation{Department of Statistics, Stanford University}
\end{aug}

\address{Sequoia Hall\\
	390 Serra Mall\\
	Stanford, CA 94305-4065\\
	\printead{e1}\\ \printead{e2}}

%\address{Sequoia Hall\\
%	390 Serra Mall\\
%	Stanford, CA 94305-4065
%	\printead{e2}}

\begin{abstract}
	
We construct confidence sets for the regression function in nonparametric binary regression with an unknown design density-- a nuisance parameter in the problem. These confidence sets are adaptive in $L^2$ loss over a continuous class of Sobolev type spaces. Adaptation holds in the smoothness of the regression function, over the maximal parameter spaces where adaptation is possible, provided the design density is smooth enough. We identify two key regimes --- one where adaptation is possible, and one where some critical regions must be removed. We address related questions about goodness of fit testing and adaptive estimation of relevant infinite dimensional  parameters. 
\end{abstract}

\begin{keyword}[class=AMS]
	\kwd[Primary ]{62G10}
	\kwd{62G20}
	\kwd{62C20}
	%\kwd[; secondary ]{60K35}
\end{keyword}
\begin{keyword}
	\kwd{Adaptive Confidence Sets}
	\kwd{Binary Regression}
	\kwd{U-Statistics}
\end{keyword}

\end{frontmatter}
In many epidemiological studies, a binary response variable $Y$
 is independently observed on a population of individuals along with multiple covariates $\bX$ to explain the variability in the response. In the context of epidemiological studies, the probability of
observing a specific outcome conditional on the covariates is often referred to as the propensity score. Estimating propensity score type functions from observed data is often of interest, and these estimates are subsequently used in multiple inferential procedures such as propensity score matching \citep{rosenbaum1983central}, inverse probability weighted inference \citep{robins1994estimation} etc. In the context of semiparametric inference for missing data type problems, a nice exposition to the importance of
understanding questions of similar flavor can be found in \cite{tsiatis2007semiparametric}. %It is problems of this nature, where
%analyzing the regression function $(P(Y = 1|\bX))$ is of interest, that we refer to as binary regression. 

Historically, regression models with binary outcomes have been approached through both parametric \citep{mccullagh1989generalized} and nonparametric lenses \citep{signorini2004kernel,antoniadis2000nonparametric}. Although parametric regression has the natural advantage of being simpler in interpretation and implementation, it often lacks the desired complexity required to
capture varieties of dependence between covariates and outcomes. Nonparametric binary regression attempts to address this question, but it has its own share of shortcomings-- the two major concerns being dependence on a priori knowledge about the true underlying regression function class and ease of implementation. Motivated by these, in this paper we study inference (estimation, testing, and confidence sets) in binary regression problems under nonparametric models having random covariates with unknown design density, with primary focus on adaptation over function classes.

To fix ideas, suppose we observe data $(\mathbf{x}_i, y_i)_{i=1}^n$, where $\mathbf{x}_i \in [0,1]^d$ and $y_i \in \{0,1\}$. Consider the binary regression model 
\be\label{eqn:model}
\E(y|\bx)=\P\left(y=1|\bx\right)=f(\bx), \ y \in \{0,1\} , \ \bx \sim g. 
\ee
For the rest of the paper we assume $g$ to be absolutely continuous with respect to the Lebesgue measure on $[0,1]^d$. Owing to the binary nature of the outcomes, the model is completely parametrized by the tuple $(f,g)$ and admits a likelihood representation
\be\label{eqn:likelihood}
l(y,\bx|f,g)=f(\bx)^y(1-f(\bx))^{1-y}g(\bx).
\ee
We will be interested in making inferences about the regression function $f$ (treating $g$ as an unknown nuisance function), assuming $f$ and $g$ belong to Sobolev type spaces $B_{2,\infty}^{\beta}(M)$ and $B_{2,\infty}^{\gamma}(M')$ respectively--- see Section \ref{sec:wavelets} for a precise definition. 

It is worth noting that, whereas an adaptive inference framework for Gaussian and density settings is well studied (\cite{burnashev1979minimax}, \cite{ingster1994minimax}, \cite{spokoiny1996adaptive}, \cite{lepski1999minimax}, \cite{ingster2009minimax}, \cite{ingster2012nonparametric},  \cite{carpentier2015testing} for goodness of fit testing, \cite{lepski1992problems,lepskii1992asymptotically,lepskii1993asymptotically}, \cite{donoho1994ideal,donoho1995adapting}, \cite{lepski1997optimal}, \cite{johnstone2002function}, \cite{cai2012minimax} for adaptive estimation, and \cite{li1989honest}, \cite{low1997nonparametric}, \cite{baraud2004confidence}, \cite{cai2004adaptation}, \cite{robins2006adaptive}, \cite{gine2010confidence}, \cite{hoffmann2011adaptive}, \cite{bull2013adaptive}, \cite{szabo2015frequentist} for honest adaptive confidence sets), the  corresponding inferential questions in binary regression, with design density unknown, have received less attention.

 In many instances, results in estimation and hypothesis testing for a non-Gaussian setup might be derived from a related Gaussian setup by appealing to the theory of asymptotic equivalence of experiments. However, it is well known that such equivalence only takes effect above certain threshold of smoothness for the underlying functions of interest. Also, asymptotic equivalence of regression models with multidimensional covariates and random covariate density is a lesser studied subject. Therefore the question of adaptive  estimation for binary regression with multivariate random design cannot be addressed by simply invoking results from asymptotic equivalence. {Moreover, the theory of asymptotic equivalence of experiments does not throw any light on the construction of adaptive confidence balls --- one of the main questions of interest in this paper.
 	}

We also note that in contrast to the usual framework for random design gaussian regression problems, we consider a setup where the design density is unknown--- hence a nuisance parameter in the problem. Although \cite{carpentier2013honest} comments briefly on the case of nonparametric regression with uniformly random design density, these do not extend to the unknown design density case. Our setup, while being more realistic, makes our proofs technically more involved. %As a result, although the results appear to be of similar flavor to those in \cite{bull2013adaptive,carpentier2013honest,carpentier2015regularity}, the rigorous derivations require careful understanding and modifications to accommodate for the effect of estimating an unknown density.
 The basic heuristic for our analysis is that in case the unknown design density is smooth enough,  modulo certain modifications (to be made precise later),  the  ``effect of estimating" the unknown design density is negligible compared to the errors in making inference for the unknown regression function. 

In particular, the main results of this paper are summarized below.  %{\tcr{update later!}} 
\begin{enumerate}[(a)]
	\item We produce estimators of underlying regression and design density which apart from jointly adapting over desired regimes of smoothness in an $L_2$ sense has the additional property of satisfying suitable boundedness (in both point-wise and Besov type norm sense) properties if the underlying functions are also similarly bounded (see Theorem \ref{thm:estimation_regression}). %This is in parallel with Claim 2 in \cite{birge1995estimation} obtained in Holder type of smoothness classes.

	\item We provide complete solution (lower and upper bounds) to the problem of asymptotic minimax goodness of fit testing %(defined in Section 3) 
	with both simple and composite null hypotheses (see Theorem \ref{thm:testing}) and unknown design density. %For known design density our results are optimal under some low density region lower bounds assumptions on the design density. For unknown design distribution, we provide rate optimal results provided the covariates density is smooth enough. The results hold for compactly supported multivariate design $\bx \in \mathbb{R}^d$. 
	An analogous result (with sharp asymptotics) for \textit{simple null hypothesis} in Gaussian regression with multi-dimensional covariates with \textit{known} design density and regression function having at least $\frac{d}{4}$ derivatives was developed by \cite{ingster2009minimax}.
	
	%\item We provide complete solution to adaptive goodness of fit testing %(defined in Section 3) 
	%when we adapt over smoothness of the regression function. Once again our results are obtained for both known and unknown design density models under certain assumptions.
	
	\item We provide theory for adaptive confidence sets which complements those obtained in density \citep{bull2013adaptive} and sequence models \citep{robins2006adaptive,carpentier2013honest} (see Theorem \ref{theorem_adaptive_confidence_set}). A part of the adaptation theory for H\"{o}lder balls was sketched briefly in \cite{robins2008higher} using the theory of higher order influence functions, where honest adaptation was possible in parts of the parameter space. Our results are over Besov balls, where following ideas of \cite{bull2013adaptive}, we identify regions of the parameter space where adaptation is not possible without removing parts of the parameter space. We make this more precise in Section \ref{main_results}.
	
	\item All of our procedures are based on second order U-statistics constructed from projection kernels of suitable wavelet bases. We therefore extend the exponential inequality obtained in \cite{bull2013adaptive} to more general second order U-statistics based on wavelet projection kernels (See Lemma \ref{lemma_ustat_tail_use}). For the case of testing of composite alternatives \ref{test:comp_unknown}, this also adds to the chi-square type empirical wavelet coefficient procedure of \cite{carpentier2015regularity}.
\end{enumerate} 

\subsection*{Notation}
The results in this paper are mostly asymptotic in nature and thus requires some standard asymptotic  notations. If $a_n$ and $b_n$ are two sequences of real numbers then $a_n \gg b_n$ (and $a_n \ll b_n$) implies that ${a_n}/{b_n} \rightarrow \infty$ (respectively ${a_n}/{b_n} \rightarrow 0$) as $n \rightarrow \infty$. Similarly $a_n \gtrsim b_n$ (and $a_n \lesssim b_n$) implies that $\liminf{{a_n}/{b_n}} = C$ for some $C \in (0,\infty]$ (and $\limsup{{a_n}/{b_n}} =C$ for some $C \in [0,\infty)$). Alternatively, $a_n=o(b_n)$ will also imply $a_n \ll b_n$ and $a_n=O(b_n)$ will imply that $\limsup{{a_n}/{b_n}} =C$ for some $C \in [0,\infty)$). We comment briefly on the various constants appearing throughout the text and proofs. Given that our primary results concern convergence rates of various estimators, we will not emphasize the role of constants throughout and rely on fairly generic notation for such constants. In particular, for any fixed  tuple $v$ of real numbers, $C(v)$ will denote a constant depending on elements of $v$ only. Throughout the paper we shall use $\E_{P}$ and $\P_P$ to denote expectation and probability under the measure $P$, and $\I$ will stand for the indicator function. For any linear subspace $L\subseteq L_2[0,1]^d$, let $\Pi\left(h|L\right)$ denote the orthogonal projection of $h$ onto $L$ under the Lebesgue measure. Finally, for suitable functions $h: [0,1]^d \to \mathbb{R}$, we let $\|h\|_q:=(\int\limits_{[0,1]^d} |h(\bx)|^q d\bx)^{1/q}$ and $\|h\|_{\infty}:=\sup_{\bx \in [0,1]^d}|h(\bx)|$ denote the usual $L_{q}$ and $L_{\infty}$ semi-norm of $h$ respectively. 
\subsection*{Organization}
The rest of the paper is organized as follows. In Section 1 we describe the main results along with the definition of honest adaptive confidence sets. Section 2 discusses our choice of model and places it in the broader perspective of heteroscedastic nonparametric regression. We collect the technical details (definition of Besov type spaces along with discussion on compactly supported wavelet bases) and proofs of the main theorems in Section 3. In Section 4 we discuss the assumptions made in the paper and scope of future research. Finally we collect the proofs of certain technical lemmas in the appendix.
% We choose not be very particular about tracking constants and use similar names for many constants appearing in several places.
%\newpage
\section{Main Results}\label{main_results}
We outline our main results in this section. We work with certain smoothness classes for both the regression and design density with suitable additional assumptions on the boundedness. For conciseness of notation, we define,
\be
\Par(\beta, \gamma,M,M',B_L,B_U)= \left\{\begin{array}{c} (f,g) : f \in B_{2,\infty}^{\beta}(M), g \in B_{2,\infty}^{\gamma}(M'), 0<f<1,\\ 0<B_L \leq g \leq B_U, \int g(\bx)d\mathbf{x} =1\end{array}\right\}. \label{eqn:parameter_space}
\ee
%where for the sake of brevity  we have assumed the same Besov radius for both the regression and density functions. 
 Above and throughout the paper, by the pair $(f,g)$ we shall refer to the probability measure $P$ generated according to \ref{eqn:likelihood} by the regression function $f$ and marginal density $g$ respectively. Therefore, by an abuse of notation, we will refer to the elements of $\Par$ interchangeably as either the pair $(f,g)$ or the corresponding probability measure $P$. We will always assume that the radius and boundedness parameters  $(M,M',B_L,B_U)$  are known to us. There are indeed some subtleties involved in inference without the knowledge of these parameters. These issues can be dealt with using our arguments adapted to Theorem 4 of \cite{bull2013adaptive}. The lower and upper bound requirements on the design density can also be relaxed to a certain extent at the cost of more involved proofs. However, for focused discussion, these will not  be addressed in this paper.  For notational brevity, we will henceforth denote $\Par(\beta, \gamma,M,M',B_L,B_U)$ simply as $\Par(\beta, \gamma)$. 

\subsection{Adaptive Estimation of parameters}
Our first result establishes the existence of certain rate optimal estimators for the regression and design density in our setup. We further establish that these estimators satisfy certain additional boundedness properties almost surely, which is invaluable for subsequent inference in this setup. %Ensuring these properties makes the proof technically harder. {However this is crucial in the subsequent construction of our adaptive confidence sets.} % and is in parallel with Claim 2 in \cite{birge1995estimation} obtained in H\"{o}lder type of smoothness classes.
 \begin{theorem}
\label{thm:estimation_regression}
\noindent
\begin{enumerate}
\item \label{thm:estimation_part1}
 Let $0<\gamma_{\min}<\gamma_{\max}$ be given. There exists a sequence of estimators $\hat{g} = \hat{g}(\bx_1, \cdots, \bx_n,B_U,\gmin,$ $ \gamma_{\max})$ of the design density $g$ and constant $C=C(M',\gmin, \gamma_{\max},B_U)$ such that for each $\gamma \in [\gamma_{\min},\gamma_{\max}]$ and $\beta>0$, 
\begin{align}
\sup_{P \in \Par(\beta, \gamma)} \E_{P}\left[\|\hat{g} - g\|_2^2\right] \leq C n^{-\frac{2\gamma}{2 \gamma + d}}, \nonumber \\
\liminf_{n \to \infty} \inf_{P \in \Par(\beta, \gamma)}\P_P[ \hat{g} \in B_{2,\infty}^{\gamma}(C)] = 1, \nonumber
\end{align}
and there exists constants $0< B_L' \leq B_U'$ (depending on $B_L,B_U$) such that $B_L' \leq \hat{g} \leq B_U'$ almost surely. Further, there exists a universal constant $c>0$ such that 
\begin{align}
\inf_{\hat{g}} \sup_{P\in \Par(\beta, \gamma)} \E_{P}[\| \hat{g} - g \|_2^2] \geq c n^{- \frac{2\gamma}{ 2\gamma +d }}. \nonumber
\end{align}

\item \label{thm:estimation_part2}
Let $0<\beta_{\min}\leq \beta_{\max}$, $\gamma_{\min}<\gamma_{\max}$ be given. If $\gamma_{\min}> \beta_{\max}$, there exists a sequence of estimators $\fhat= \hat{f}_n(\bx_1, y_1, \cdots, \bx_n, y_n, M,M', B_U,B_L,\bmin,\bmax,\gmax)$ and constant $C=C(M,M',B_U,B_L,\bmin,\bmax,\gmax)$ such that for every $\beta \in [\betamin,\betamax]$ and $\gamma \in  [\gamma_{\min},\gamma_{\max}]$
\begin{align*}
&\sup\limits_{P \in \Par(\beta,\gamma)} \E_P\left[\|\fhat-f\|_2^2\right] \leq Cn^{-\frac{2\beta}{2\beta+d}}, \\
&\liminf_{n \to \infty} \inf_{P \in \Par(\beta, \gamma)} \P_P[\fhat \in \besov^{\beta}(C)] =1,
\end{align*}
and there exist constants $C_L \leq C_U$ such that $C_L \leq \hat{f} \leq C_U$ almost surely. 
Further, there exists a constant $c>0$, independent of $n$, such that 
	\begin{align*}
		\inf_{\fhat} \sup_{P \in \Par(\beta,\gamma)} \E_{P} [ \| \hat{f}- f \|_2^2 ] \geq c n^{- \frac{2 \beta}{2\beta + d}}.  \nonumber
	\end{align*}
\end{enumerate}
\end{theorem}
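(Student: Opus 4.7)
The plan is to build $\hat g$ in two stages: a Lepski-adapted linear wavelet density estimator, followed by an $L^2$-projection onto a suitable convex set. Letting $K_J$ denote the projection kernel onto $V_J$ for a compactly supported wavelet basis of regularity exceeding $\gamma_{\max}$, I would first take $\tilde g_J(\bx) = n^{-1}\sum_i K_J(\bx,\bx_i)$ and use the standard bias/variance split $\E\|\tilde g_J - g\|_2^2 \lesssim 2^{-2J\gamma} + 2^{Jd}/n$, together with Lepski's method on a dyadic grid spanning the minimax scales for $\gamma\in[\gamma_{\min},\gamma_{\max}]$, to obtain a data-driven level $\hat J$ achieving the rate $n^{-2\gamma/(2\gamma+d)}$ uniformly. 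Then I would set $\hat g$ to be the $L^2$-projection of $\tilde g_{\hat J}$ onto
\begin{equation*}
\mathcal{K} \;=\; \bigl\{h \in V_{J_{\max}} : B_L/2 \le h \le 2B_U,\ \|h\|_{B^{\gamma_{\min}}_{2,\infty}} \le C_0\bigr\},
\end{equation*}
for some $C_0 > M'$. Because $g\in\mathcal{K}$ by assumption, this projection is non-expansive in $L^2$ and does not worsen the rate, while automatically yielding the pointwise bounds $B_L' \le \hat g \le B_U'$ a.s. The Besov-$\gamma$ conclusion would then follow from a coefficient-level concentration argument: below the oracle cutoff the empirical wavelet coefficients concentrate around the true ones (so $\|\hat g\|_{B^\gamma_{2,\infty}}$ is controlled by $\|g\|_{B^\gamma_{2,\infty}}$), and above it they vanish by construction of $\hat J$. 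The matching lower bound is standard Fano with perturbations $g_\eta = g_0 + \varepsilon \sum_\lambda \eta_\lambda \psi_{j_0,\lambda}$, $2^{j_0 d}\asymp n^{d/(2\gamma+d)}$.

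\textbf{Plan for Part 2 (regression).} The key observation is that $\E[y_i K_J(\bx,\bx_i)] = \Pi_{V_J}(fg)(\bx)$, which motivates
\begin{equation*}
\widehat{fg}_J(\bx) \;=\; \frac{1}{n}\sum_{i=1}^n y_i K_J(\bx,\bx_i)
\end{equation*}
as an estimator of the product $fg$. Since $f,g$ are bounded and $\gamma > \beta$, the Besov multiplier theorem gives $fg\in B^\beta_{2,\infty}(C_f)$, so a bias/variance split and Lepski selection over $[\beta_{\min},\beta_{\max}]$ would produce $\widehat{fg}_{\hat J}$ with $L^2$ rate $n^{-2\beta/(2\beta+d)}$. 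I would then define $\hat f = \widehat{fg}_{\hat J}/\hat g$, well defined since $\hat g \ge B_L'>0$ almost surely, and analyze it via
\begin{equation*}
\hat f - f \;=\; \frac{\widehat{fg}_{\hat J} - fg}{\hat g} \;-\; f\cdot \frac{\hat g - g}{\hat g}.
\end{equation*}
The first summand achieves rate $n^{-\beta/(2\beta+d)}$ by the above; the second, via Part 1, is $O_P(n^{-\gamma/(2\gamma+d)})$, strictly faster because $\gamma\ge\gamma_{\min}>\beta_{\max}\ge\beta$, so $f$-error dominates. A concluding $L^2$-projection onto a convex set $\{h : C_L\le h\le C_U,\ \|h\|_{B^{\beta_{\min}}_{2,\infty}}\le C_1\}$ (which contains $f$) enforces the pointwise bounds and Besov membership of $\hat f$ without hurting the rate. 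The lower bound follows by freezing $g=g_0$ for any admissible smooth $g_0$ (reducing to classical binary regression with known design, which is harder than the full problem only by constants) and running the standard Fano perturbation on $f$.

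\textbf{Expected main obstacles.} The most delicate point will be achieving simultaneously (i) the optimal $L^2$ rate, (ii) pointwise bounds almost surely, and (iii) Besov containment with probability tending to one: naive clipping destroys (iii) and naive thresholding destroys (ii). The projection-onto-convex-set device circumvents this, but only because the truth lies in the convex set; proving Besov membership of $\hat g$ and $\hat f$ with high probability requires a uniform concentration inequality on the empirical wavelet coefficients (the $U$-statistic tail bound advertised as Lemma \ref{lemma_ustat_tail_use} will play the analogous role elsewhere in the paper). A secondary subtlety will be the Lepski step in Part 2, whose variance proxy for $\widehat{fg}_J$ depends on the unknown $g$; one would replace it by a uniform upper bound over the boundedness range $[B_L,B_U]$, which only affects constants.
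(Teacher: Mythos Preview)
Your overall architecture is sound, and in Part~2 the idea of first running Lepski on $\widehat{fg}_J(\bx)=n^{-1}\sum_i y_iK_J(\bx,\bx_i)$ and only afterwards dividing by $\hat g$ is a genuinely different and arguably cleaner route than the paper's: there the candidate estimators are $\hat f_j(\bx)=n^{-1}\sum_i \frac{y_i}{\hat g(\bx_i)}K_{V_j}(\bx_i,\bx)$, whose conditional bias is $\Pi(fg/\hat g\mid V_j)-f$ and whose Lepski analysis therefore has to carry an extra random term $\Pi(f(g/\hat g-1)\mid V_j)$ through every inequality (handled via a separate probabilistic bound on $\|\hat g-g\|_2$). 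Your decomposition $\hat f-f=(\widehat{fg}-fg)/\hat g-f(\hat g-g)/\hat g$ sidesteps that. The rate arguments and lower bounds are otherwise the same in spirit.

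The gap is in the last step, where you try to get pointwise bounds and Besov-$\gamma$ membership (resp.\ Besov-$\beta$) simultaneously by $L^2$-projecting onto
\[
\mathcal K=\bigl\{h\in V_{J_{\max}}:\,B_L/2\le h\le 2B_U,\ \|h\|_{B^{\gamma_{\min}}_{2,\infty}}\le C_0\bigr\}.
\]
This guarantees pointwise bounds and $B^{\gamma_{\min}}$ membership, but the theorem asks for $\hat g\in B^{\gamma}_{2,\infty}(C)$ for the \emph{unknown} $\gamma\in[\gamma_{\min},\gamma_{\max}]$, and metric projection onto $\mathcal K$ has no reason to map $B^\gamma$ into $B^\gamma$. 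Indeed, $L^2$-projection onto $\{a\le h\le b\}$ alone is exactly pointwise clipping, the operation you yourself flag as destroying~(iii); intersecting with a $B^{\gamma_{\min}}$-ball changes the projection but does not upgrade smoothness to $B^\gamma$. Your coefficient-concentration argument shows $\tilde g_{\hat J}\in B^\gamma_{2,\infty}(C)$ w.h.p., but this does not transfer to $\Pi_{\mathcal K}\tilde g_{\hat J}$, because the projection mixes wavelet coefficients across levels in an uncontrolled way. The same problem recurs verbatim for $\hat f$.

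The paper's remedy is to replace the $L^2$-projection by composition with a \emph{smooth} truncation: take $\psi\in C^\infty(\mathbb R)$ with $\psi|_{[B_L,B_U]}=\mathrm{id}$ and $B_L/2\le\psi\le 2B_U$, and set $\hat g=\psi(\tilde g_{\hat J})$. Pointwise bounds and rate preservation are immediate. The key lemma is a superposition result: for bounded $\psi\in C^\infty$ and any $h\in B^{\gamma}_{2,\infty}(M)$ one has $\psi(h)\in B^{\gamma}_{2,\infty}(C(M,\gamma))$, proved via the modulus-of-smoothness characterization of $B^{\gamma}_{2,\infty}$ together with a Taylor expansion of $\psi$. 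Crucially, this lemma does not fix a target smoothness in advance, so once coefficient concentration gives $\tilde g_{\hat J}\in B^\gamma$ w.h.p., the same follows for $\psi(\tilde g_{\hat J})$ at every $\gamma$ in the range. If you substitute this device for your projection step, your plan (including the $\widehat{fg}/\hat g$ construction in Part~2) goes through.
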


The proof of Theorem \ref{thm:estimation_regression} is outlined in Section \ref{section:estimation_proof}. 

\begin{remark}
The result of Theorem \ref{thm:estimation_regression} part \ref{thm:estimation_part1} is similar to that in \cite{bull2013adaptive}. {It is worth noting that results of the kind stating that $\ghat \in B_{2,\infty}^{\gamma}(M^{*})$ with high probability uniformly over $\Par(\beta,\gamma)$ for a suitably large constant $M^{*}$ is not too hard to show. However, our proof shows that a suitably bounded estimator $\ghat$, which adapts over smoothness and satisfies $\ghat \in B_{2,\infty}^{\gamma}(M^{*})$ with probability larger than $1-\frac{1}{n^{\theta}}$ uniformly over $\Par(\beta,\gamma)$, for any $\theta>0$ and correspondingly large enough $M^{*}$ . Additionally, the results of Theorem \ref{thm:estimation_regression} part \ref{thm:estimation_part2} are relatively less common in an unknown design density setting. Indeed, adaptive estimation of regression function with random design over Besov type smoothness classes has been obtained by model selection type techniques by \cite{baraud2002model} for the case of Gaussian errors. Our results in contrast, as remarked in Section \ref{section_discussion},  hold for any regression model with bounded outcomes and compactly supported covariates having suitable marginal design density.}  %Once again, it provides membership guarantees of a suitably bounded version of the estimator in an appropriate Besov ball. 
%Finally the requirement of $\gmin>\bmax$ is used only in proving that $\liminf_{n \to \infty} \inf_{P \in \Par(\beta, \gamma)} \P_P[\fhat \in \besov^{\beta}(C)] =1$  and not in the construction or $L_2$ error properties of $\fhat$.
\end{remark}

%\begin{remark}\tcr{check this remark}
%A few remarks are in order on the dependence of $\fhat$ on the Besov radius $M$. 
%A careful inspection of the proof of Theorem \ref{thm:estimation_regression} reveals that the dependence of $\fhat$ is on the Besov radius of $g$ rather than $f$. Due to our assumption of same Besov radius for both $f$ and $g$ this distinction is lost in the statement of the results. Indeed we do believe (without proof) that without some qualitative assumptions on the Besov norm of $g$ it is not possible to construct similar estimators of $f$ in $L_2$ norm. %It is worth noting that while estimating in $L_2(g)$ norm (i.e. in the norm $\|h\|_g:=\left(\int h^2(\bx)g(\bx)d\bx\right)^{1/2}$) the dependence on the knowledge of $g$ is more relaxed. 
%\end{remark}
\begin{remark}
The dependence of our constants on $\gmax$ stems from deciding the regularity of the wavelet basis used. Once we fix a wavelet basis with regularity $S>\gmax$, the dependence of our constants on $\gmax$ can be reduced to dependence on $S$.
\end{remark}
\subsection{Construction of Confidence Sets}
To tackle the question of adaptive confidence sets in our setup, we need to first analyze the goodness of fit problem in this setup. The next theorem characterizes the minimax testing rate for our problem. The proof is deferred to Section \ref{sec:proof_testing}. To this end, we introduce the parameter spaces
\be
\Par_0(\beta,\gamma)&= \{ (f,g): f \equiv1/2 , g \in B_{2,\infty}^{\gamma}(M'), B_L < g < B_U, \int g(\bx) d\bx =1\}, \\
 \Par(\beta,\gamma, \rho_n^2) &= \{ (f,g): f \in \besovb(M),\Big\|f - \frac{1}{2} \Big\|_2^2 >\rho_n^2, g \in B_{2,\infty}^{\gamma}(M'), B_L < g < B_U, \int g(\bx) d\bx =1\}. 
 \ee
 Further, for $\beta_1 > \beta_2$, we define, 
 \begin{align}
 \Par(\beta_1, \beta_2, \gamma, \rho_n^2) =  
\left\{ \begin{array}{c}  (f,g): f \in B_{2,\infty}^{\beta_2}(M), \Big\|f - B_{2,\infty}^{\beta_1}(M) \Big\|_2^2 >\rho_n^2, g \in B_{2,\infty}^{\gamma}(M'),  \\ 
 B_L < g < B_U, \int g(\bx) d\bx =1.  \end{array} \right\}.  \nonumber 
\end{align}
Finally we recall $\Par(\beta,\gamma)$ defined in \eqref{eqn:parameter_space}.
\begin{theorem}
\label{thm:testing}
\noindent
\begin{enumerate}
\item
\label{thm:testing_simple}
 Consider the testing problem
\be
	H_0 : P \in \Par_0(\beta,\gamma) \,\,\,\,vs.\,\,\,\, H_1: P \in \Par(\beta,\gamma, \rho_n^2), 
\ee
for $\gamma>\beta$.
We have, 
	\begin{itemize}
		\item For any $0 < \alpha <1 $, there exists $D>0$ sufficiently large (depending on $\alpha, M, M'$)  and  a test $\phi$ such that for $\rho_n^2 = D n ^{- \frac{4\beta}{ 4\beta +d }} $ 
		\be
		\limsup_{n \to \infty}	\Big(\sup_{P \in \Par_0(\beta,\gamma)} \P_{P}[\phi =1 ] +\sup_{P \in \Par(\beta,\gamma, \rho_n^2)} \P_{P} [ \phi =0 ] \Big) \leq \alpha \label{eq:error} . 
		\ee

		\item For any test $\phi$ which satisfies \eqref{eq:error} introduced above, the corresponding sequence 
		$\rho_n^2$ satisfies 
		\be
			 \liminf_{n\to \infty} \rho_n^2 \gtrsim n^{- \frac{4\beta}{4 \beta +d } } . 
		\ee
\end{itemize}

\item
\label{thm:testing_composite}
 Consider the testing problem 
 \be
 	H_0 : P \in \Par(\beta_1, \gamma) \,\,\, vs. \,\,\, H_1: P \in \Par(\beta_1,\beta_2, \gamma, \rho_n^2), \label{test:comp_unknown}
 \ee
 for {$  \beta_2  < \beta_1 $} and {$\gamma > 2 \beta_2$}. Then 
 	\begin{itemize}
 		\item For any $0 < \alpha <1 $, there exists $D>0$ sufficiently large (depending on $\alpha, M, M'$)  and  a test $\phi$ such that for $\rho_n^2 = D n ^{- \frac{4\beta_2}{ 4\beta_2 +d }} $ 
 		\be
 		\limsup_{n\to \infty} \Big[	\sup_{P \in \Par(\beta_1,\gamma)}\P_{P}[\phi =1 ] + \sup_{P \in \Par(\beta_1,\beta_2,\gamma, \rho_n^2)} \P_{P} [ \phi =0 ]  \Big]\leq \alpha \label{eq:error_unknown} .
 		\ee
 		
 		\item  \label{thm:comp_unknown2} 
 		For any test $\phi$ which satisfies \eqref{eq:error_unknown} introduced above, the corresponding sequence 
 		$\rho_n^2$ satisfies 
 		\be
 			\liminf_{n \to \infty} \rho_n^2 \gtrsim n^{- \frac{4\beta_2}{4 \beta_2 +d } } .
 		\ee
 	\end{itemize}  
\end{enumerate}
\end{theorem}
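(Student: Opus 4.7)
The plan is to build both upper bounds from wavelet-based second-order $U$-statistics analyzed through Lemma \ref{lemma_ustat_tail_use}, and to obtain the matching lower bounds by standard Ingster--Spokoiny priors on wavelet coefficients.

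For Part \ref{thm:testing_simple} (simple null), the natural test statistic is
\[
T_n \;=\; \frac{1}{n(n-1)} \sum_{i \neq j} (Y_i - 1/2)(Y_j - 1/2)\, K_J(X_i, X_j),
\]
where $K_J$ is the projection kernel of the wavelet resolution space $V_J$. A direct computation gives $\E_P[T_n] = \|\Pi((f-1/2)g\,|\,V_J)\|_2^2$, which vanishes under $H_0$; under $H_1$, the condition $\gamma>\beta$ places $(f-1/2)g$ in $B^{\beta}_{2,\infty}$, so the approximation error $\|(f-1/2)g\|_2^2 - \|\Pi((f-1/2)g\,|\,V_J)\|_2^2$ is at most $C\,2^{-2J\beta}$, while $\|(f-1/2)g\|_2 \geq B_L\|f-1/2\|_2$ by the lower bound on $g$. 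Under $H_0$ the variance of $T_n$ is $O(2^{Jd}/n^2)$, and Lemma \ref{lemma_ustat_tail_use} supplies the concentration. Balancing $2^{-2J\beta} \asymp 2^{Jd/2}/n$ produces $2^J \asymp n^{2/(4\beta+d)}$ and separation rate $\rho_n^2 \asymp n^{-4\beta/(4\beta+d)}$. Size is controlled by rejecting when $T_n$ exceeds a multiple of its $H_0$ standard deviation.

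For Part \ref{thm:testing_composite} (composite null), the target is to detect the presence of wavelet energy at resolutions where $B^{\beta_1}_{2,\infty}(M)$ can have only a negligible amount. Choose $J^*$ with $2^{J^*} \asymp n^{2/(4\beta_2+d)}$ and a cutoff $J_0 \ll J^*$; use the high-pass kernel $K_{J^*} - K_{J_0}$ so that any $f \in B^{\beta_1}_{2,\infty}(M)$ contributes at most $\|(I-\Pi_{V_{J_0}})f\|_2^2 \lesssim 2^{-2J_0\beta_1}$ to the mean, which is dominated by $n^{-4\beta_2/(4\beta_2+d)}$ as soon as $J_0$ is large enough. Since $g$ is unknown, the clean version of the statistic works with $Y_iY_j$ and subtracts a plug-in correction built from the estimator $\hat g$ of Theorem \ref{thm:estimation_regression}\ref{thm:estimation_part1}; the assumption $\gamma > 2\beta_2$ is precisely what makes the nuisance error from $\hat g$ (at rate $n^{-2\gamma/(2\gamma+d)}$ squared) smaller than the targeted signal level. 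The remaining bias/variance balance at resolution $J^*$ gives $\rho_n^2 \asymp n^{-4\beta_2/(4\beta_2+d)}$.

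For both lower bounds the strategy is a Bayesian reduction with a Rademacher prior on high-frequency wavelet coefficients. In Part \ref{thm:testing_simple}, fix $g \equiv 1$ and take $f = 1/2 + \delta_n \sum_k \varepsilon_k \psi_{J,k}$ with $\varepsilon_k$ i.i.d.\ uniform on $\{\pm 1\}$, $2^J \asymp n^{2/(4\beta+d)}$, and $\delta_n$ tuned so that $\|f-1/2\|_2 \asymp \rho_n$ while $f \in B^{\beta}_{2,\infty}(M)$ almost surely. The $\chi^2$ divergence between the Bernoulli mixture and $P_0$ factorizes over samples and over wavelet indices, and a Taylor expansion around $1/2$ reduces it to the classical Ingster computation $n^2\delta_n^4 \cdot 2^{Jd}$, bounded by a constant in the chosen regime. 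Part \ref{thm:comp_unknown2} follows from the same device, now perturbing a fixed interior point of $B^{\beta_1}_{2,\infty}(M/2)$ at resolution $2^J \asymp n^{2/(4\beta_2+d)}$ so the perturbation stays in $B^{\beta_2}_{2,\infty}(M)$ but exits $B^{\beta_1}_{2,\infty}(M)$ by a distance of the right order.

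The main obstacle will be the composite upper bound: the bias of the $U$-statistic must be controlled uniformly over all of $B^{\beta_1}_{2,\infty}(M)$, and the plug-in correction using $\hat g$ must be analyzed as a three-term remainder where cross terms involving $\hat g - g$ and the residual $f - \Pi_{V_{J_0}} f$ interact. This is exactly the setting in which the sharp assumption $\gamma > 2\beta_2$ is consumed, and where the extension of the Bull--Nickl deviation bound to U-statistics with centered residuals $Y_i - \hat f(X_i)$ (Lemma \ref{lemma_ustat_tail_use}) is indispensable.
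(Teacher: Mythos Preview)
Your treatment of Part~\ref{thm:testing_simple} is essentially the paper's argument: the same $U$-statistic, the same resolution choice, the same use of Lemma~\ref{lemma_ustat_tail_use}. For the lower bound you invoke a $\chi^2$/Ingster computation; the paper instead applies the Hellinger-affinity bound of Robins et al.\ (2009) with smooth bump functions on dyadic cubes, but both routes are standard and valid.

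For Part~\ref{thm:testing_composite}, however, your single band-pass statistic based on $K_{J^*}-K_{J_0}$ does not work. You correctly bound the contribution under $H_0$ by $2^{-2J_0\beta_1}$ and then take $J_0$ growing with $n$ so that this is $o(\rho_n^2)$. But you never verify that the mean is large under $H_1$, and in fact it need not be. Take $g\equiv 1$ and $f=\tfrac12+\sum_{k}c_k\psi_{l_0,k}^v$ at a \emph{fixed} low level $l_0$ with $\sum_k c_k^2=M^2 2^{-2l_0\beta_2}$ (so $f\in B^{\beta_2}_{2,\infty}(M)$ and $0<f<1$ once the $c_k$ are spread out). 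The distance $\|f-B^{\beta_1}_{2,\infty}(M)\|_2=M(2^{-l_0\beta_2}-2^{-l_0\beta_1})$ is a fixed positive constant, hence exceeds $\rho_n$ for all large $n$, so $f$ lies in $H_1$. Yet $\Pi_{V_{J^*}\setminus V_{J_0}}f=0$ once $J_0>l_0$, so your statistic has mean zero on this alternative and the Type~II error equals $1$. The point is that the excess over the $B^{\beta_1}_{2,\infty}(M)$ budget can sit at \emph{any} resolution, and a single aggregated band-pass energy cannot separate ``large $\|\Pi_{W_l}f\|_2$ within the $M2^{-l\beta_1}$ budget'' from ``small $\|\Pi_{W_l}f\|_2$ exceeding it.''

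The paper avoids this by following Carpentier's multi-scale scheme: it forms a family of statistics $T_n(l)=\frac{1}{n(n-1)}\sum_{i\neq j}\frac{y_i}{\hat g(\bx_i)}K_{W_l}(\bx_i,\bx_j)\frac{y_j}{\hat g(\bx_j)}$ for every level $J_0\le l\le j_0$ (with $J_0$ the fixed base resolution of the basis, not growing), and rejects if $T_n(l)$ exceeds a \emph{level-dependent} cutoff $\tilde C_l\approx (M2^{-l\beta_1}+\text{small})^2$ for some $l$. The Type~II analysis then rests on a pigeonhole lemma (Lemma~\ref{lemma:signal}): if $\|f-B^{\beta_1}_{2,\infty}(M)\|_2>\rho_n$, there must exist a level $l$ at which $\|\Pi_{W_l}(fg/\hat g)\|_2$ exceeds $M2^{-l\beta_1}$ by a prescribed margin $\tau_l$. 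The assumption $\gamma>2\beta_2$ enters exactly where you say, to absorb $\|\Pi_{W_l}(f(g-\hat g)/\hat g)\|_2$, but note that the nuisance is handled by \emph{dividing} by $\hat g$ (inverse-weighting), not by a plug-in subtraction as you sketch. Your last paragraph correctly anticipates that the composite upper bound is the hard part; what is missing is the level-wise thresholding structure that makes it go through.
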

A few remarks are in order about the results above. First, it is interesting to note whether the complexity of the null hypothesis affects the minimal rate of separation between the null and the alternative necessary to carry out the test. Our result answers this question in the negative. As mentioned earlier, although the results appear to be of similar flavor to those in \cite{bull2013adaptive,carpentier2015regularity}, the rigorous derivations require careful understanding and modifications to accommodate for the effect of estimating an unknown density. %In particular, there can be two  possible approaches to solve the testing problem \eqref{test:comp_unknown}. 
A possible approach to the testing problem \eqref{test:comp_unknown} can be the method of \cite{carpentier2015regularity} without further modification. However, such an approach results in unbiased estimation of $\|\Pi(fg|L)\|_2^2$ for appropriate subspaces $L \subset L_2[0,1]^d$ instead of $\|\Pi(f|L)\|_2^2$ required for understanding the minimum separation $\Big\|f - B_{2,\infty}^{\beta_1}(M) \Big\|_2^2$. {Instead, our proof shows that under the alternative, the quantity $\|\Pi(f\frac{g}{\ghat}|L)\|_2^2$ is also large enough for suitable subspaces $L$. This quantity is easier to estimate modulo the availability of a nice estimator $\ghat$--- which is in turn guaranteed by Theorem \ref{thm:estimation_regression}.} However, this also necessitates modifying the testing procedure of \cite{carpentier2015regularity} suitably to incorporate the effect of estimating $g$. We make this more clear in the proof of Theorem \ref{thm:testing_composite}.

Next, we outline the construction of honest adaptive confidence sets in our setup.  We briefly introduce the relevant notions for convenience. A confidence set $C_n = C(\bx_1, y_1, \cdots, \bx_n, y_n)$ is a random measurable subset of $L^2$. We define the $L^2$ radius of a set $C$ as 
\begin{align*}
|C|= \inf\{ \tau: C\subset \{ \psi : \| \psi - g \|_2 \leq \tau\} \textrm{ for some } g\}. 
\end{align*}
We seek to determine the maximal parameter spaces $\Par_n$ so that adaptive confidence sets exist. We define a confidence set $C_n= C_n(\bx_1, y_1 ,\cdots, \bx_n, y_n)$ to be \textit{honest} over a sequence of models $\Par_n$  if 
\be
\inf_{ P \in \Par_n } \E_P[ f \in  C_n] \geq 1- \alpha - r_n, \label{eqn:honesty}
\ee
where $r_n \to 0$ as $n \to \infty$ and $\alpha<1$ is a fixed level of confidence. Further, we call a confidence set $C_n$ adaptive over a sequence of models $\Par_n$ if there exists a constant $C$ depending on the known parameters of the model space $\Par_n$ such that 
\be
\sup_{P \in \Par_n \cap \Par(\beta, \gamma)} \P_P\Big[ |C_n|^2 \geq C n^{-\frac{2\beta}{2\beta +d }}\Big] \leq \alpha', \label{eqn:adapt}
\ee
where $0<\alpha'<1$ is a fixed constant.

Now we define the parameter spaces over which we will produce honest adaptive confidence sets in $L_2$. For given interval of smoothness of regression function $[\beta_{\min},\beta_{\max}]$ such that $\beta_{\max}>2\beta_{
\min}$, we define a grid following ideas from \cite{bull2013adaptive}. With $N=\lceil \log_2\left(\frac{\beta_{\max}}{\beta_{\min}}\right) 
\rceil$ define $\beta_j=2^{j-1}\beta_{\min}, \ j=1,\ldots,N$. With this notation, we define
\be
\F_n(M^*)&=\besov^{\beta_N}(M)\cup \Big(\bigcup_{j=1}^{N-1}\mbesov^{\beta_j}(M,M^*\rho_n(\beta_j))\Big), \nonumber\\ 
\Par_n(M^*,M',\gamma) &= \Big\{ (f,g): f \in \F_n(M^*), 0< f <1, g \in \besov^{\gamma}(M'), B_L \leq g \leq  B_U, \int g(\bx) d\bx=1\Big\}, \nonumber 
\ee
where  ${{\mbesov^{\beta_j}(M,M^*\rho_n(\beta_j))=\left\{ h \in B_{2,\infty}^{\beta_j}(M) : \| h - B_{2,\infty}^{\beta_{j+1}}(M) \|_2 \geq M^*\rho_n(\beta_j)\right\}}}$, $\rho_n(\beta)=n^{-\frac{2\beta/d}{4\beta/d+1}}$, and the choice of $M^*$ is solely guided by $M,M',\beta_{\min},\beta_{\max},B_L,B_U$ and can be read off from the proof of the next theorem.

\begin{theorem}\label{theorem_adaptive_confidence_set}
Let $0<\beta_{\min}\leq \beta_{\max}$, $ 2 \beta_{\max}< \gamma_{\min} \leq \gamma_{\max}$ be given. Then there exists a confidence set $C_n$ depending only on the tuple $(M, M',\beta_{\min},\beta_{\max}, \gmin, \gamma_{\max},B_L,B_U,\alpha,\alpha^{'})$ which is honest and adaptive in the sense of \eqref{eqn:honesty} and \eqref{eqn:adapt} over $\bigcup_{\gamma \in [\gmin, \gmax]}\Par_n(M^*,M',\gamma)$, whenever $M^*$ is large enough.
 \end{theorem}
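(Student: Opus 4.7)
The plan is to follow the adaptive testing paradigm of \cite{bull2013adaptive}, combining the pilot estimator from Theorem \ref{thm:estimation_regression} with a cascade of composite tests from Theorem \ref{thm:testing} indexed by the geometric grid $\beta_1,\ldots,\beta_N$. The unknown design density $g$ enters only through these two building blocks, both of which already furnish uniform-in-$\gamma$ guarantees over $[\gmin,\gmax]$.

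Let $\hat{f}$ denote the regression estimator of Theorem \ref{thm:estimation_regression}\ref{thm:estimation_part2}, satisfying $\E_P\|\hat{f}-f\|_2^2 \leq C n^{-2\beta/(2\beta+d)}$ uniformly over $\Par(\beta,\gamma)$; the hypothesis $\gmin > \bmax$ required there is implied by our stronger assumption $\gmin > 2\bmax$. For each $j \in \{1,\ldots,N-1\}$, let $\phi_j$ be the test from Theorem \ref{thm:testing}\ref{thm:testing_composite} for
\[
H_0:\ f \in B_{2,\infty}^{\beta_{j+1}}(M) \quad \text{vs.} \quad H_1:\ \|f - B_{2,\infty}^{\beta_{j+1}}(M)\|_2 \geq M^* \rho_n(\beta_j),
\]
at level $\alpha_0/N$, where $\alpha_0 > 0$ is chosen small enough that the union-bound accounting below delivers the target confidence levels $\alpha$ and $\alpha'$. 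The hypothesis $\gamma > 2\beta_{j+1}$ of that theorem is satisfied since $\gmin > 2\bmax \geq 2\beta_{j+1}$; we take $M^*$ at least $\sqrt{D(\alpha_0/N, M, M')}$, where $D$ is the separation constant appearing in Theorem \ref{thm:testing}\ref{thm:testing_composite}. Set
\[
\hat{j} := \min\{j \in \{1,\ldots,N-1\} : \phi_j = 1\}
\]
(with $\hat{j} := N$ if no test rejects) and
\[
C_n := \bigl\{f' \in L^2([0,1]^d) : \|f' - \hat{f}\|_2 \leq K n^{-\beta_{\hat{j}}/(2\beta_{\hat{j}}+d)}\bigr\}
\]
for a sufficiently large constant $K$.

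Consider first $f \in \mbesov^{\beta_{j^*}}(M, M^* \rho_n(\beta_{j^*}))$ for some $j^* \in \{1,\ldots,N-1\}$ (the case $f \in B_{2,\infty}^{\beta_N}(M)$ is analogous and forces $\hat{j}=N$). For honesty, the defining inequality $\|f - B_{2,\infty}^{\beta_{j^*+1}}(M)\|_2 \geq M^* \rho_n(\beta_{j^*})$ combined with the power guarantee of Theorem \ref{thm:testing}\ref{thm:testing_composite} yields $\P_P[\phi_{j^*}=1] \geq 1 - \alpha_0/N - o(1)$; on this event $\hat{j} \leq j^*$, so the radius is at least $K n^{-\beta_{j^*}/(2\beta_{j^*}+d)}$, itself a constant multiple of the MSE rate of $\hat{f}$ at the true smoothness. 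Markov's inequality applied to Theorem \ref{thm:estimation_regression}\ref{thm:estimation_part2} then gives $\|\hat{f}-f\|_2 \leq K n^{-\beta_{j^*}/(2\beta_{j^*}+d)}$ with probability $\geq 1-\alpha/2$ for $K$ large, hence $f \in C_n$ with probability $\geq 1-\alpha-o(1)$ uniformly in the nuisance $g$. For adaptation, the embedding $B_{2,\infty}^{\beta_{j^*}}(M) \subset B_{2,\infty}^{\beta_{j+1}}(M)$ for $j < j^*$ places $f$ under $H_0$ of $\phi_j$, so $\P_P[\phi_j = 1] \leq \alpha_0/N + o(1)$ by the size bound of Theorem \ref{thm:testing}\ref{thm:testing_composite}. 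A union bound over $j \in \{1,\ldots,j^*-1\}$ gives $\hat{j} \geq j^*$ with probability $\geq 1-\alpha'-o(1)$, whence the radius is bounded by $K n^{-\beta_{j^*}/(2\beta_{j^*}+d)}$, which matches the target $n^{-\beta/(2\beta+d)}$ at grid smoothness $\beta = \beta_{j^*}$ up to a multiplicative constant.

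The main technical hurdle is the simultaneous calibration of $M^*$ so that the composite test of Theorem \ref{thm:testing}\ref{thm:testing_composite} achieves both Type I and Type II error control at every grid point, uniformly over the unknown $g$. This reduces to taking $M^*$ larger than $\sqrt{D(\alpha_0/N, M, M')}$, combined with the monotonicity $\rho_n(\beta_j) \geq \rho_n(\beta_{j+1})$ along the geometric grid, which propagates the separation guaranteed at $j^*$ (via the inclusion $B_{2,\infty}^{\beta_{j+1}}(M) \subset B_{2,\infty}^{\beta_{j^*+1}}(M)$ for $j \geq j^*$) to all higher-indexed tests as well. Once these calibrations are arranged, the honesty and adaptation bounds follow by the union-bound accounting above.
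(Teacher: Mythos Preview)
Your testing cascade and the identification of $\hat{j}$ mirror the paper's approach, and your honesty argument is essentially correct. The gap is in adaptation. Your confidence set has radius exactly $K\,n^{-\beta_{\hat{j}}/(2\beta_{\hat{j}}+d)}$, which you yourself note ``matches the target \ldots\ at grid smoothness $\beta=\beta_{j^*}$''. But the adaptation requirement \eqref{eqn:adapt} asks for $|C_n|^2\le C\,n^{-2\beta/(2\beta+d)}$ for \emph{every} $\beta\in[\bmin,\bmax]$, not only at the grid points. If $f$ lies in the shell $\mbesov^{\beta_{i_0}}(M,M^*\rho_n(\beta_{i_0}))$ but has true smoothness $\beta$ strictly between $\beta_{i_0}$ and $\beta_{i_0+1}=2\beta_{i_0}$, then on the high-probability event $\hat{j}=i_0$ your squared radius is $K^2 n^{-2\beta_{i_0}/(2\beta_{i_0}+d)}$, while the target is $C\,n^{-2\beta/(2\beta+d)}$. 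Since $x\mapsto 2x/(2x+d)$ is strictly increasing, the ratio of these two quantities diverges with $n$, and no fixed $C$ can absorb it. So your construction is only grid-adaptive, not continuously adaptive.

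The paper closes this gap by not fixing the radius at a grid rate. Instead it builds, from a third split of the sample, a U-statistic
\[
\hat{U}_n=\frac{1}{n(n-1)}\sum_{i_1\neq i_2}\frac{(y_{i_1}-\hat f(\bx_{i_1}))}{\hat g(\bx_{i_1})}K_{V_{j_1}}(\bx_{i_1},\bx_{i_2})\frac{(y_{i_2}-\hat f(\bx_{i_2}))}{\hat g(\bx_{i_2})},
\]
with $j_1$ chosen from $\hat\beta$, and takes the confidence set to be $\{h:\|h-\hat f\|_2^2\le \hat U_n+\text{lower-order terms}+z(\alpha)\tau_n(h)\}$. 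The point is that $\hat U_n$ unbiasedly estimates $\|\Pi_{V_{j_1}}((f-\hat f)g/\hat g)\|_2^2$, so its leading order is $\|f-\hat f\|_2^2$, which by Theorem \ref{thm:estimation_regression} is $O_P(n^{-2\beta/(2\beta+d)})$ at the \emph{true} $\beta$. The role of $\hat\beta$ is only to calibrate $j_1$ so that the bias and variance of $\hat U_n$ are $o(n^{-2\beta/(2\beta+d)})$; this is where the grid relation $\beta<\beta_{i_0+1}=2\beta_{i_0}$ is used, to show the deterministic correction $n^{-4\beta_{i_0}/(4\beta_{i_0}+d)}$ is negligible. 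In short, the testing step alone cannot deliver continuous adaptation; you need the additional quadratic-functional estimation step from \cite{robins2006adaptive} to let the radius track $\|f-\hat f\|_2^2$ directly.
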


Theorem \ref{theorem_adaptive_confidence_set} is proved in Section \ref{sec:confidence_proof}.
It is of interest to determine whether the models $\cup_{\g}\Par_n$, are in some sense, the maximal spaces over which adaptation is possible. We note that the testing lower bounds established in Theorem \ref{thm:testing} part \ref{thm:testing_composite} above imply that $\cup_{\g}\Par_n$ is indeed the largest parameter space, up to multiplicative constants of $\rho_n(\beta_j), j=1,\ldots,N$, over which adaptation is possible. {Moreover, results of the flavor of \cite[Theorem 3]{bull2013adaptive} can be recovered from the proof of Theorem \ref{theorem_adaptive_confidence_set}.}

\section{Choice of Binary Regression Model}\label{section:choice_of_binary_regression} In this section we comment on our choice of binary regression model.

Regarding the generality of our model choice, there are two main points that need addressing. The first concerns the framing of model \eqref{eqn:model} without going through a link function--- as is the general custom for generalized linear models. Indeed for a link function formulation as
\be\label{eqn:model_glm}
\E(y|\bx)=\theta(h(x)), \ y \in \{0,1\} , \ \bx \sim g,
\ee
for $\theta$ a distribution function of a symmetric random variable (probit, logistic etc.), our results still go through provided $\theta$ satisfies some regularity conditions. In general for a smooth function $\theta$, the function $\theta(h(x))$ shares the smoothness index of $h$ and identifying $f:=\theta\circ h$ lands us back in model \eqref{eqn:model}. To keep things simple, we work with model \eqref{eqn:model} throughout. 

The second point to note is that we have not considered the fixed design case in our set up. The fixed design problem can be addressed similarly with more straightforward generalization of ideas from \cite{robins2006adaptive}, \cite{bull2013adaptive}, and \cite{carpentier2015regularity} due to lack of extra nuisance parameter $g$. We omit this for the sake of brevity.

The other point worth discussing concerns the generalizability of the binary regression model to more general nonparametric regression models. In this context note that, additive Gaussian noise is the simplest example of  a situation where the regression function can be parametrized separately from other components of the model such as conditional variance given the covariates. This facilitates the development of a satisfying adaptation theory, even over large classes of unknown design densities. In non-gaussian settings, given a likelihood specification for the conditional distribution of outcomes given covariates, one can  attempt to produce a similar theory for adaptive inference. 

As a step towards a general theory of adaptive inference in nonparametric regression, we consider the case of binary outcomes. Binary regression automatically belongs to a heteroscedastic variance regime--- a more challenging scenario in general (\cite{efromovich1996nonparametric}, \cite{efromovich1996sharp}, \cite{antoniadis2001wavelet}, \cite{antoniadis2001waveletq},\\ \cite{galtchouk2008adaptive}, \cite{galtchouk2009sharp}). However, it is different from standard heteroscedastic additive Gaussian noise regression problems in that the mean regression function is intimately tied to the conditional variance and shares the same smoothness. In this case, the simplicity of the conditional distribution of the outcome given regressors allows us to answer the question of adaptive inference to some degree of generality.

{Finally we remark that most of our results actually hold not only for the case of binary regression, but also  for any regression model with bounded outcomes and compactly supported covariates having suitable marginal design density. It is for the proof of matching lower bounds to show that our results are asymptotically rate optimal, that we need the binary regression model. }

 \section{Discussion}\label{section_discussion}
 Although we have tried to describe adaptive confidence sets for binary regression to some degree of generality, it is instructive to discuss some of the assumptions made in the process.  Throughout our paper, we assume a lower bound of smoothness on the marginal density $g$ of $\bx$. Although our assumption is not sharp, we believe that such an assumption on the marginal density of $\bx$ is necessary to a certain extent. This ensures that we learn about $g$ at a rate fast enough so that it does not reflect too adversely on the inference for $f$. One can also wonder if the requirement $\gamma_{\min}>2\beta_{\max}$ in Theorem \ref{theorem_adaptive_confidence_set} can be relaxed.  Using results from \cite{robins2008higher} it is possible to further reduce our lower bound on the smoothness of $g$ in the context of adaptive confidence sets over smoothness of $f$ satisfying $\beta_{\min}<\beta_{\max}<2\beta_{\min}$. It remains an interesting and challenging question to understand the sharp lower bound on the smoothness for $g$ under which one no longer derive results similar to those obtained here in the other regime i.e. $2\beta_{\min}<\beta_{\max}$. In a future project we plan to investigate this issue with special focus on using higher order influence functions \citep{robins2008higher, robins2015higher}. Indeed, even in the case of non-adaptive inference, \cite{robins2008higher, robins2015higher} require certain smoothness lower bounds on the unknown design density. {A related point of view for constructing honest adaptive confidence sets is often in the context of ``self similar" functions-- a case where construction of fully adaptive honest confidence sets are possible without further removing parts of the self similar function spaces \citep{picard2000adaptive,gine2010confidence,kerkyacharian2012concentration,bull2012honest,nickl2014sharp,ray2014bernstein,szabo2015frequentist}.} Although we do not pursue this in our paper, it is possible to use ideas from our paper to answer similar questions.

\section{Technical Details}
\subsection{Wavelets and Besov Spaces}
\label{sec:wavelets}
In this section, we collect some facts about wavelets and Besov spaces. We also introduce some notation that we use later. 
For $d>1$, consider expansions of functions $h \in L_2\left([0,1]^d\right)$ on an orthonormal basis of compactly supported bounded wavelets of the form 
\be
h(\bx)&=\sum_{k \in \mathbb{Z}^d}\langle h, \bpsi_{0,k}^0\rangle \bpsi_{0,k}^0(\bx)+ \sum_{l=0}^{\infty}\sum_{k \in \mathbb{Z}^d}\sum\limits_{v \in \{0,1\}^d-\{0\}^d}\langle h, \bpsi_{l,k}^v\rangle \bpsi_{l,k}^v(\bx) , %\label{eqn:wavelet_expansion_aad}
\ee
where the base functions $\bpsi_{l,k}^v$ are orthogonal for different indices $(l,k,v)$ and are scaled and translated versions of the $2^d$ $S$-regular base functions $\bpsi_{0,0}^v$ with $S>\beta$, i.e., $\bpsilk^v(x)=2^{ld/2}\bpsi_{0,0}^v(2^l \bx-k)=\prod_{j=1}^{d}2^{\frac{l}{2}}\psi_{0,0}^{v_j}\left(2^lx_j-k_j\right)$ for $k=(K_1,\ldots,k_d) \in \ZZ^d$ and $v=(v_1,\ldots,v_d)\in \{0,1\}^d$ with $\psi_{0,0}^0=\phi$ and $\psi_{0,0}^1=\psi$ being the scaling function and mother wavelet of regularity $S$ respectively as defined in one dimensional case. As our choices of wavelets, we will throughout use compactly supported scaling and wavelet functions of Cohen-Daubechies-Vial type with $S$ first
null moments\citep{cohen1993wavelets}. In view of the compact support of the wavelets, for each resolution level $l$ and index $v$, only $O(2^{ld})$ base elements $\psi_{l,k}^v$ are non-zero on $[0,1]$; let us denote the corresponding set of indices $k$ by $\Z_l$ obtaining the representation,
%\be
%h(\bx)&=\sum_{k \in \Z_0}\langle h, \bpsi_{0,k}^0\rangle \bpsi_{0,k}^0(\bx)+ \sum_{l=0}^{\infty}\sum_{k \in \Z_l}\sum\limits_{v \in \{0,1\}^d-\{0\}^d}\langle h, \bpsi_{l,k}^v\rangle \bpsi_{l,k}^v(\bx) ,\label{eqn:wavelet_expansion_aad_compact}
%\ee
%Essentially, we can start the expansion above at any resolution level $l=J_0$ and obtain 
\be
h(\bx)&=\sum_{k \in \Z_{J_0}}\langle h, \bpsi_{J_0,k}^0\rangle \bpsi_{J_0,k}^0(\bx)+ \sum_{l=J_0}^{\infty}\sum_{k \in \Z_l}\sum\limits_{v \in \{0,1\}^d-\{0\}^d}\langle h, \bpsi_{l,k}^v\rangle \bpsi_{l,k}^v(\bx) , \label{eqn:wavelet_expansion_aad_compact_j0}
\ee
where $J_0=J_0(S)\geq 1$ is such that $2^{J_0}\geq S$ \citep{cohen1993wavelets, gine2015mathematical}. Thereafter, letting for any $h \in L_2[0,1]^d$, $\|\langle h,\bpsi_{\lprime,\cdot}\rangle\|_2$ be the vector $L_2$ norm of the vector \\$\left(\langle h,\bpsi^v_{\lprime,\kprime}\rangle: \kprime \in \mathcal{Z}_{\lprime},v \in \left\{0,1\right\}^d\right)$, define
\be
\besovb (M):=\Big\{h \in L_2\left([0,1]^d\right):  \|h\|_{\beta,2}:=2^{J_0\beta}\|\langle h,\bpsi^{0}_{J_0,\cdot}\rangle\|_2+\sup_{l \geq J_0}2^{l\beta}\Big(\sum_{k \in \mathbb{Z}^d}\sum_{v \in \{0,1\}^d-\{0\}^d}\langle h, \bpsi_{l,k}^v\rangle^2\Big)^{\frac{1}{2}}\leq M \Big\}. \\ \label{eqn:besov_multidim}
\ee
%\be
%\besovb (M):=\Big\{h \in L_2\left([0,1]^d\right):  \|h\|_{\beta,2}:=\sup_{l \geq J_0}2^{l\beta}\Big(\sum_{k \in \mathbb{Z}^d}\sum_{v \in \{0,1\}^d}\langle h, \bpsi_{l,k}^v\rangle^2\Big)^{\frac{1}{2}}\leq M \Big\}. \\ \label{eqn:besov_multidim}
%\ee
We will be working with projections onto subspaces defined by truncating expansions as above at certain resolution levels. For example letting
\be
V_j:=\mathrm{span}\left\{\bpsilk^v, J_0 \leq l\leq j, k \in \Z_l, v\in \{0,1\}^d\right\}, j \geq J_0 \label{eqn:defn_vj}
\ee
one immediately has the following orthogonal projection kernel onto $V_j$ as 
\be
K_{V_{j}}\left(\bx_1,\bx_2\right)=\sum_{k \in \Z_{J_0}}\bpsi_{J_0,k}^0(\bx_1)\bpsi_{J_0,k}^0(\bx_2)+\sum_{l=J_0}^{j}\sum_{k \in \Z_l}\sum_{v \in \{0,1\}^d-\{0\}}\bpsi_{l,k}^v(\bx_1)\bpsi_{l,k}^v(\bx_2). \label{eqn:projection_vj}
\ee
Owing to the MRA property of the wavelet basis, it is easy to see that $K_{V_j}$ has the equivalent representation as 
\be
K_{V_{j}}\left(\bx_1,\bx_2\right)=\sum_{k \in \Z_j}\sum_{v \in \{0,1\}^d}\psi_{jk}^v\left(x_1\right)\psi_{jk}^v\left(x_2\right). \label{eqn:projection_vj_alt}
\ee
We will also consider,
\be
W_j:=\mathrm{span}\left\{\bpsi_{j,k}^v, k \in \Z_j, v\in \{0,1\}^d-\{0\}^d\right\}, j \geq J_0 \label{eqn:defn_wj}
\ee
and the corresponding orthogonal projection kernel onto $W_j$ as 
\be
K_{W_{j}}\left(\bx_1,\bx_2\right)=\sum_{k \in \Z_j}\sum_{v \in \{0,1\}^d-\{0\}^d}\bpsi_{j,k}^v(\bx_1)\bpsi_{j,k}^v(\bx_2). \label{eqn:projection_wj}
\ee 

%Proofs
\subsection{Proof of Theorem \ref{thm:testing}}
\label{sec:proof_testing}
We will describe the proof of Theorem \ref{thm:testing} in this section. To this end, we will crucially utilize Lemma \ref{lemma_ustat_tail_use}. The proof will be deferred to the Appendix B. 
The U-statistics appearing in this paper are mostly based on projection kernels sandwiched between arbitrary bounded functions. This necessitates generalizing the U-statistics bounds obtained in \cite{bull2013adaptive}. In particular, we are interested in tail bounds of U-statistics based on kernel $R(\bW_1,\bW_2)=L\left(\bW_1\right) K_{V_j}\left(\bX_1,\bX_2\right)L\left(\bW_2\right)$ and $R(\bW_1,\bW_2)=L\left(\bW_1\right) K_{W_j}\left(\bX_1,\bX_2\right)L\left(\bW_2\right)$ where $\bW=\left(Y,\bX\right)$ and $Y \in \mathbb{R}, \bX \in [0,1]^d$. Assume that $|L(\bW)|\leq B$ (which corresponds to our situation). 

\begin{lemma}\label{lemma_ustat_tail_use}
	There exists constant $C:=\constant>0$ such that
	\begin{align}
	\mathbb{P}\Big(\Big|\frac{1}{n(n-1)}\sum_{i_1 \neq i_2}R\left(\bW_{i_1},\bW_{i_2}\right)-\E\left(R\left(\bW_1,\bW_2\right)\right) \Big|\geq t\Big) & \leq e^{-C nt^2}+e^{-\frac{C t^2}{a_1^2}}+e^{-\frac{C t}{a_2}}+e^{-\frac{C \sqrt{t}}{\sqrt{a_3}}} \nonumber
	\end{align}
	where $a_1= \frac{1}{n-1}2^{\frac{jd}{2}}$, $a_2=\frac{1}{n-1}\Big(\sqrt{\frac{2^{jd}}{n}}+1\Big)$, $a_3=\frac{1}{n-1}\Big(\sqrt{\frac{2^{jd}}{n}}+\frac{2^{jd}}{n}\Big)$, $R(\bW_1,\bW_2)=L(\bW_1) K_{V_j}\left(\bX_1,\bX_2\right)L(\bW_2)$ or $R(\bW_1,\bW_2)=L(\bW_1) K_{W_j}\left(\bX_1,\bX_2\right)L(\bW_2)$ with $K_{V_j}$ and $K_{W_j}$ constructed using compactly supported wavelet bases of regularity $S$, $\bW=\left(Y,\bX\right)$, $|L(\bW)|\leq B$ almost surely $\bW$, and $\bX \in [0,1]^d$ has density $g$ such that $g(\bx)\leq B_U$ for all $\bx \in [0,1]^d$.
\end{lemma}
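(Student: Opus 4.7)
\medskip

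\noindent\textbf{Proof plan for Lemma \ref{lemma_ustat_tail_use}.}

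The plan is to follow the classical recipe for concentration of second-order U-statistics: a Hoeffding decomposition into a linear part and a canonical (degenerate) part, then apply Bernstein for the linear sum and a Giné--Latala--Zinn / Houdré--Reynaud-Bouret / Adamczak type exponential inequality for the degenerate part. The four-regime structure of the latter, together with the sizes of the relevant kernel norms, will be exactly what produces the three ``degenerate'' terms $e^{-Ct^{2}/a_{1}^{2}}$, $e^{-Ct/a_{2}}$, $e^{-C\sqrt{t/a_{3}}}$ in the statement, while the Bernstein term yields $e^{-Cnt^{2}}$.

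\medskip

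First I would set $h_{1}(\bW_{1})=\E[R(\bW_{1},\bW_{2})\mid \bW_{1}]-\E R$ and $h_{2}(\bW_{1},\bW_{2})=R(\bW_{1},\bW_{2})-h_{1}(\bW_{1})-h_{1}(\bW_{2})-\E R$; $h_{2}$ is then canonical and the Hoeffding identity gives
\[
\tfrac{1}{n(n-1)}\sum_{i_{1}\neq i_{2}} R(\bW_{i_{1}},\bW_{i_{2}})-\E R \;=\; \tfrac{2}{n}\sum_{i=1}^{n}h_{1}(\bW_{i}) \;+\; \tfrac{1}{n(n-1)}\sum_{i\neq j} h_{2}(\bW_{i},\bW_{j}).
\]
A union bound reduces the problem to controlling the two pieces separately. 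For the linear piece, write $\tilde L(x)=\E[L(\bW)\mid \bX=x]$ and $u(x)=\int K_{V_{j}}(x,x')\tilde L(x')g(x')\,dx'=\Pi[\tilde Lg\mid V_{j}](x)$, so that $h_{1}(\bW)=L(\bW)u(\bX)-\E R$. Using the CDV wavelet facts $\|\Pi_{V_{j}}\phi\|_{\infty}\le c\|\phi\|_{\infty}$ and $\|\Pi_{V_{j}}\phi\|_{2}\le\|\phi\|_{2}$, together with $|L|\le B$ and $g\le B_{U}$, both $\|h_{1}\|_{\infty}$ and $\operatorname{Var}(h_{1})$ are bounded by constants depending only on $B,B_{U}$ (crucially, \emph{independent of $j$}). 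Bernstein's inequality then yields $\P(|\tfrac{2}{n}\sum_{i}h_{1}(\bW_{i})|>t)\le 2e^{-Cnt^{2}}$, producing the first term.

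\medskip

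For the degenerate piece, I would apply the exponential inequality for canonical U-statistics of order two: there exists $K>0$ with
\[
\P\!\bigl(|\tsum_{i\neq j}h_{2}(\bW_{i},\bW_{j})|\ge s\bigr)\le K\exp\!\left(-\tfrac{1}{K}\min\!\left(\tfrac{s^{2}}{\mathcal{C}^{2}},\,\tfrac{s}{\mathcal{D}},\,\bigl(\tfrac{s}{\mathcal{B}}\bigr)^{2/3},\,\bigl(\tfrac{s}{\mathcal{A}}\bigr)^{1/2}\right)\right),
\]
where $\mathcal{A}=\|h_{2}\|_{\infty}$, $\mathcal{C}^{2}=n^{2}\E h_{2}^{2}$, $\mathcal{B}=n\max(\sup_{w}\E h_{2}(w,\bW)^{2},\sup_{w}\E h_{2}(\bW,w)^{2})^{1/2}$, and $\mathcal{D}=n\sup\{\E[h_{2}(\bW_{1},\bW_{2})\varphi(\bW_{1})\psi(\bW_{2})]:\|\varphi\|_{L_{2}(P)},\|\psi\|_{L_{2}(P)}\le 1\}$. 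I would compute these four quantities using only the standard wavelet facts $K_{V_{j}}(x,x)\le c\,2^{jd}$, $\int K_{V_{j}}(x,y)^{2}dy=K_{V_{j}}(x,x)$, and $\|\Pi_{V_{j}}\|_{L_{2}\to L_{2}}=1$ (the same facts hold for $K_{W_{j}}$, so the two cases are treated identically). The calculations give, with implicit constants depending on $B,B_{U}$,
\[
\mathcal{A}\lesssim 2^{jd},\qquad \mathcal{C}^{2}\lesssim n^{2}2^{jd},\qquad \mathcal{B}\lesssim n\,2^{jd/2},\qquad \mathcal{D}\lesssim n,
\]
where $\mathcal{D}$ is obtained by recognizing the bilinear form as the integral operator $f\mapsto L(\cdot)\Pi_{V_{j}}[\tilde L_{f}g]$ on $L_{2}(P)$, whose norm is bounded by $B^{2}B_{U}^{3/2}\|\Pi_{V_{j}}\|=B^{2}B_{U}^{3/2}$.

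\medskip

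Finally I would substitute $s=n(n-1)t$ into the four-regime bound and read off the three exponents: $s^{2}/\mathcal{C}^{2}\sim (n-1)^{2}t^{2}/2^{jd}=t^{2}/a_{1}^{2}$; the terms $s/\mathcal{D}$ and $(s/\mathcal{B})^{2/3}$, after taking the min-exponent form, combine to give a sub-exponential bound of the form $e^{-Ct/a_{2}}$ with $a_{2}=(n-1)^{-1}(1+\sqrt{2^{jd}/n})$ (the two summands reflecting whether $2^{jd}\lesssim n$ or $2^{jd}\gtrsim n$); and $(s/\mathcal{A})^{1/2}$ together with the other half of $(s/\mathcal{B})^{2/3}$ yield $e^{-C\sqrt{t/a_{3}}}$ with $a_{3}=(n-1)^{-1}(\sqrt{2^{jd}/n}+2^{jd}/n)$. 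The main technical obstacle will be the bookkeeping in this last step together with the $L_{2}(P)\to L_{2}(P)$ operator-norm bound $\mathcal{D}=O(n)$, which is the reason the lemma gives a constant ``$+1$'' inside $a_{2}$ rather than a term growing with $2^{jd}$; the ``sandwich'' structure $L\cdot K_{V_{j}}\cdot L$ is crucial here, because $L$ enters as a bounded multiplication operator that does not enlarge the operator norm. The remaining steps (Bernstein for $h_{1}$ and the diagonal/reproducing estimates for $K_{V_{j}}$ and $K_{W_{j}}$) are routine.
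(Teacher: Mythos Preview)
Your proposal is correct and follows essentially the same route as the paper: Hoeffding decomposition into a linear term plus a degenerate $U$-statistic, a Hoeffding/Bernstein bound on the linear term using the fact that $h_{1}$ is uniformly bounded by a constant independent of $j$, and the Gin\'e--Latala--Zinn/Houdr\'e--Reynaud-Bouret inequality on the degenerate part with the same size estimates $\Lambda_{1}^{2}\lesssim n^{2}2^{jd}$, $\Lambda_{2}\lesssim n$, $\Lambda_{3}^{2}\lesssim n2^{jd}$, $\Lambda_{4}\lesssim 2^{jd}$ for the four parameters. The one place where the paper is more explicit than your outline is the final ``bookkeeping'' step: rather than arguing by cases on whether $2^{jd}\lesssim n$, the paper simply uses the elementary inequality $2u^{3/2}\le u+u^{2}$ to split the $\Lambda_{3}u^{3/2}$ contribution between the $u$ and $u^{2}$ coefficients, which immediately produces the stated $a_{2}$ and $a_{3}$; you may find this cleaner than the regime-splitting you sketch.
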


\subsubsection{Proof of Part \ref{thm:testing_simple}}
We will first introduce a test with the desired properties. %For notational simplicity, assume that we have samples $\{ \mathbf{x}_i, y_i\}_{i=1}^{2n}$. We split the sample into two parts and construct the estimator $\hat{g}$ for the design density, introduced in Theorem \ref{thm:estimation_regression}, based on the second part. Throughout this proof, $\E_{i,P}[\cdot]$ will denote the expectation under the distribution $P$ over the $i^{th}$ half of the sample, the other half being held fixed.
 We use the statistic
	\begin{align*}
		T= \frac{1}{n(n-1)} \sum_{ 1\leq i \neq j \leq n } { (y_i - 1/2)} K_{V_{j_0}} (\mathbf{x}_i , \mathbf{x}_j ) {(y_j - 1/2) } . 
	\end{align*}
	Here, we choose $j_0 = \lceil \frac{2}{4\beta +d } \log_2 n \rceil$. 
	We reject this test when $|T| > C \frac{2^{j_0 d/2}}{n}$, for some constant $C$ to be chosen appropriately. We first control the Type I error for this test. We have, under $P \in \Par_0(\beta,\gamma)$, $\E_{ P}[T]=0$. 
	Applying Lemma \ref{lemma_ustat_tail_use}, we obtain $\P_{P}[ |T| > C \frac{2^{j_0 d/2}}{n}] \leq e^{-C}$. 
	Thus the Type I error may be controlled at the desired level $\alpha$ by choosing the cut-off $C$ sufficiently large. To control the Type II error, we fix $P \in \Par(\beta,\gamma, \rho_n^2)$. In this case, we have, 
	\begin{align}
		\E_{P}[ T ] &= \Big\| \Pi_{V_{j_0}}\Big( (f- 1/2) {g}\Big) \Big\|_2^2 
		= \Big\| (f - 1/2) {g}\Big \|_2^2 - \Big\| \Pi_{V_{j_0}^\perp} \Big((f-1/2) {g} \Big)\Big\|_2^2. \nonumber\\
		&\geq B_L^2 \rho_n^2 -  \frac{2C(M,M')}{\sqrt{1- 2^{-2\beta}}} 2^{- 2j_0 \beta},\nonumber   
	\end{align}
	where the last line follows since $\gamma>\beta$, using arguments similar to the proof of Lemma \ref{lemma:density_truncation}.
	%To control the Type II error, %we set $\hat{\Delta} = \frac{g - \hat{g}}{\hat{g}}$ and note that adaptivity of $\ghat$ implies that there exists a constant $\Gamma$ sufficiently large such that 	$\P_{P}[ \| g - \hat{g} \|_2^2 > \Gamma n^{-\frac{ 2 \gamma}{ 2 \gamma +d }} ] \leq \frac{\alpha}{4}$. 

	Thus we have, 
	\begin{align*}
		\P_P\Big[ |T| > C \frac{2^{j_0 d/2}}{n} \Big] &= 1 - \P_P\Big[ |T| \leq C \frac{2^{j_0 d/2}}{n} \Big], \nonumber \\
		\P_P\Big[ |T| \leq C \frac{2^{j_0 d/2}}{n} \Big] %&\leq \P_P \Big[ |T| \leq C \frac{2^{j_0 d/2}}{n} , \| g - \hat{g} \|_2^2 \leq \Gamma n^{-\frac{ 2 \gamma}{ 2\gamma +d }} \Big] + \frac{\alpha}{4} . \nonumber \\
		&\leq \P_P \Big[ | T - E_{P}[T] | \geq E_{P}[T] - C \frac{2^{j_0 d/2}}{n}   \Big] , \nonumber \\
		&\leq \P_P \Big[ |T- E_{P}[T] | \geq B_L^2\rho_n^2 - C' n^{-\frac{4\beta}{4\beta + d} }\Big],
	\end{align*}
	where $C'$ depends on $B_L,M,M'$.
	The proof is completed by an application of Lemma \ref{lemma_ustat_tail_use}, upon setting $\rho_n^2 =  D n^{- \frac{4\beta}{4\beta +d }}$ for some constant $D$ sufficiently large. 
	
%{\tcr{How to organize??}} 
Next, we establish a matching (up to constants) lower bound on the testing rate for this problem. Assume that $\rho_n^2 \ll n^{-\frac{4\beta}{4\beta+d}}$. The proof of the lower bound is then based on Theorem 2.1 of %Lemma \ref{lemma_hellinger_affinity} which is due to  
\cite{robins2009semiparametric}.  In particular, let $H:[0,1]^d\rightarrow \mathbb{R}$ be a $C^{\infty}$ function supported on $\left[0,\frac{1}{2}\right]^d$ such that $\int H(\bx)d\bx=0$ and $\int H^2(\bx)d\bx=1$ and let $k=\lceil c_0 n^{\frac{2d}{4\beta+d}}\rceil$ for some $c_0 >0$. Now suppose that $\Omega_1,\ldots,\Omega_k$ be the translates of the cube $k^{-\frac{1}{d}}\left[0,\frac{1}{2}\right]^d$ that are disjoint and contained in $[0,1]^d$. Letting $\bx_1,\ldots,\bx_k$ denote the bottom left corners of these cubes, we set for $\lambda=(\lambda_1,\ldots,\lambda_k)\in \{-1,+1\}^k$,
	$$f_{\lambda}=\frac{1}{2}+\Big(\frac{1}{k}\Big)^{\frac{\beta}{d}}\sum\limits_{j=1}^k\lambda_j H\Big((\bx-\bx_j)k^{\frac{1}{d}}\Big).$$
	The construction ensures that $f_{\lambda}\in B_{2,\infty}^{\beta}(M)$ ($H$ can be chosen to guarantee desired $M$) for every $\lambda=(\lambda_1,\ldots,\lambda_k)\in \{-1,+1\}^k$ and $\big\|f_{\lambda}-\frac{1}{2}\big\|_2^2 = \left(\frac{1}{k}\right)^{\frac{2\beta}{d}}$. Therefore, by the choice of $k$, each $f_{\lambda}$ corresponds to a measure in the alternative hypothesis. Choose $\pi$ to be the uniform prior on $\{-1,+1\}^k$. We use the notation of Theorem 2.1 of \cite{robins2009semiparametric}, %Lemma \ref{lemma_hellinger_affinity}, 
	let us partition the sample space $\chi=\{0,1\}\times [0,1]^d$ into $\chi_j=\{0,1\}\times \Omega_j, \ j=1,\dots,k$ and the remaining set. Letting $P_{\lambda}$ and $Q_{\lambda}$ be the probability measure on $\{0,1\}\times [0,1]^d$ corresponding to  likelihood \eqref{eqn:likelihood} for $f=f_{\lambda}$ and $f\equiv\frac{1}{2}$ respectively, its obvious that $P_{\lambda}(\chi_j)=Q_{\lambda}(\chi_j)=p_j \ (\text{say})$, since $\int H(\bx)d\bx=0$. Also, $p_j \in \frac{1}{k}[\underline{B},\overline{B}]$ for fixed constants $\underline{B},\overline{B}$. Moreover, $\delta=\max_j\sup_{\lambda}\int_{\chi_j}\frac{(q-p)^2}{p_{\lambda}}\frac{d\mu}{p_j}$ since $p=\int p_{\lambda}d\pi(\lambda)=\int f_{\lambda}^y(1-f_{\lambda})^{1-y}d\pi(\lambda)=\frac{1}{2}=q$. Finally, $p_{\lambda}-q=p_{\lambda}-p=\left(f_{\lambda}-\frac{1}{2}\right)^y\left(\frac{1}{2}-f_{\lambda}\right)^{1-y}$ implies that $a=b=\max_j\sup_{\lambda} \int_{\Omega_j}\frac{\left(f_{\lambda}-\frac{1}{2}\right)^2}{p_j}\in k^{-\frac{2\beta}{d}}[\underline{B},\overline{B}]$. Therefore, by Theorem 2.1 of \cite{robins2009semiparametric} if $\rho(P_1,P_2)$ denotes the Hellinger affinity between two probability measures $P_1,P_2$ defined on the same probability space %Lemma \ref{lemma_hellinger_affinity},
	$$\rho\Big(\int P_{\lambda}d(\pi(\lambda)),\int Q_{\lambda}d(\pi(\lambda))\Big)\geq 1-C\frac{n^2}{k}k^{-\frac{4\beta}{d}},$$
	which can be made arbitrarily close to one for large enough $c_0$. This proves the theorem since if the Hellinger affinity is bounded away from $1$, then there does any consistent sequence of tests distinguishing between the null hypothesis and the easier alternative corresponding to the $f_{\lambda}$'s constructed above \citep{tsybakov2008introduction}. 

\subsubsection{Proof of Part \ref{thm:testing_composite}} 
We will  construct a test with the desired properties below. The proof of the testing lower bound follows from the argument outlined for the previous part of the Theorem. Our proof is similar in spirit to that of \cite{carpentier2015regularity}, though the details are considerably different. 

Similar to the argument for the previous part, we set  $j_0 = \lceil \frac{2}{4\beta_2 + d} \log_2 n \rceil$. We assume that we have data $\{ \mathbf{x}_i, y_i \}_{i =1}^{2n}$. We split it into two equal parts and use the second part to construct the estimator $\hat{g}$ of the design density $g$ introduced in Theorem \ref{thm:estimation_regression}.  Throughout the proof, $\E_{i,P}[\cdot]$ will denote the expectation with respect to the $i^{th}$ half of the sample, with the other half held fixed, under the distribution $P$. For $J_0 \leq l \leq j_0$, we construct the test statistics 
	\begin{align*}
		T_n(l) = \frac{1}{n(n-1)} \sum_{1\leq i \neq j\leq n} \frac{y_i }{ \hat{g}(\mathbf{x}_i)}  K_{W_l} (\mathbf{x}_i , \mathbf{x}_j ) \frac{y_j}{ \hat{g}(\mathbf{x}_j)}. 
	\end{align*}
	
	By Markov inequality, there exits a constant $C^*$ such that 
		\be 
		\P_P[ \| \hat{g} - g \|_2^2 > {C^*}^2 n^{- \frac{2\gamma}{2 \gamma +d }}] < \frac{\alpha}{4}. \label{eqn:cstar_def}
		\ee
		
		  We will condition on this event throughout this proof. The construction of the test depends on the following two lemmas. 
\begin{lemma}
\label{lemma:ustat_deviation_bound}
For $0< \alpha <1$, there exists $\zeta$ sufficiently large such that 
\be
\P_P\Big[ \forall \,\, J_0 \leq l \leq j_0, | T_n(l) - \| \Pi_{W_l} (f \frac{g}{\hat{g}}) \|_2^2 | \leq \zeta \sqrt{\frac{2^{(l+ j_0)d/2} }{n^2} + 2^{ld/4} \frac{ \| \Pi_{W_l}(f \frac{g}{\hat{g}})\|_2^2}{n}} \Big] \geq 1- 3\alpha/4. \nonumber 
\ee
\end{lemma}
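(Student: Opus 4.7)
The key structural point is that the lemma is proved under sample splitting: $\hat g$ has been constructed from the second half of the observations, so conditionally on the second half the statistic $T_n(l)$ is a second-order U-statistic in the first half whose kernel is exactly of the form required by Lemma \ref{lemma_ustat_tail_use}. The proof therefore proceeds in three steps: (i) identify the conditional mean of $T_n(l)$, (ii) apply the U-statistic tail bound at each scale $l$, and (iii) take a union bound across the $O(\log n)$ admissible scales.

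First I would condition on the second half of the sample. By Theorem \ref{thm:estimation_regression} we may further assume $B_L' \leq \hat g \leq B_U'$ almost surely, so $L(y,\mathbf{x}):= y/\hat g(\mathbf{x})$ is uniformly bounded by $B=1/B_L'$. Under this conditioning we may write
\begin{equation*}
T_n(l) \;=\; \frac{1}{n(n-1)} \sum_{i\neq j} L(W_i)\, K_{W_l}(\mathbf{x}_i,\mathbf{x}_j)\, L(W_j),
\end{equation*}
which is of the form treated by Lemma \ref{lemma_ustat_tail_use}. Using $\E_P[y|\mathbf{x}]=f(\mathbf{x})$ and the fact that $K_{W_l}$ is the Lebesgue-orthogonal projection kernel onto $W_l$, a direct computation gives
\begin{equation*}
\E_{1,P}[T_n(l)] \;=\; \int\!\!\int \frac{f(\mathbf{x}_1)g(\mathbf{x}_1)}{\hat g(\mathbf{x}_1)}\, K_{W_l}(\mathbf{x}_1,\mathbf{x}_2)\, \frac{f(\mathbf{x}_2)g(\mathbf{x}_2)}{\hat g(\mathbf{x}_2)}\, d\mathbf{x}_1 d\mathbf{x}_2 \;=\; \Big\| \Pi_{W_l}\big(f g/\hat g\big)\Big\|_2^2,
\end{equation*}
so the centering in the lemma is exactly the conditional expectation of $T_n(l)$.

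Next I would apply Lemma \ref{lemma_ustat_tail_use} at the deviation level
\begin{equation*}
t_l \;=\; \zeta \sqrt{\frac{2^{(l+j_0)d/2}}{n^2} + 2^{ld/4}\, \frac{\|\Pi_{W_l}(fg/\hat g)\|_2^2}{n}},
\end{equation*}
with $a_1,a_2,a_3$ as in that lemma but with $j$ replaced by $l$. The lemma produces a bound of the form $e^{-Cnt_l^2}+e^{-Ct_l^2/a_1^2}+e^{-Ct_l/a_2}+e^{-C\sqrt{t_l/a_3}}$. Substituting the chosen $t_l$ and recalling $l \leq j_0 \leq \frac{2}{4\beta_2+d}\log_2 n +1$, one verifies term-by-term that every exponent is at least $c\,\zeta^{p}\log n$ for some $p>0$ and some $c>0$ depending only on $(\alpha,B_L',B_U',M,M',\beta_2,d)$. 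Summing the four terms and taking a union bound over the $O(\log n)$ admissible scales $l \in \{J_0,\ldots,j_0\}$, together with the $\alpha/4$ already spent on the event $\{\|\hat g - g\|_2^2 \leq (C^*)^2 n^{-2\gamma/(2\gamma+d)}\}$ from \eqref{eqn:cstar_def}, yields the claimed $3\alpha/4$ once $\zeta$ is taken large enough.

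The main obstacle is the bookkeeping in step three: one must verify that the single choice of $t_l$ simultaneously dominates all four regimes (Gaussian, sub-Gaussian, sub-exponential, and sub-Weibull) of Lemma \ref{lemma_ustat_tail_use}, uniformly across both the high-frequency regime $2^{ld}\gtrsim n$ and the low-frequency regime $2^{ld}\ll n$. The two mildly non-standard ingredients in $t_l$ --- the appearance of $j_0$ rather than $l$ in the first summand, and the factor $2^{ld/4}$ in front of $\|\Pi_{W_l}(fg/\hat g)\|_2^2/n$ in the second --- are chosen precisely so as to absorb the slower-decaying sub-exponential and sub-Weibull tails at the largest scale $l=j_0$, where $a_2$ and $a_3$ are worst. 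Once this case analysis is carried out at the top scale, monotonicity in $l$ immediately takes care of the remaining scales, and the rest of the argument is routine.
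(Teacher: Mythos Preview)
Your overall structure --- condition on the second half, identify the conditional mean, control deviations at each scale, then take a union bound --- is the right skeleton, and the paper follows the same outline. However, the paper's actual argument is \emph{simpler} than yours: rather than invoking the exponential inequality of Lemma~\ref{lemma_ustat_tail_use}, it computes the conditional variance
\[
\Var_{1,P}[T_n(l)] \;\leq\; C\Big(\frac{\|\Pi_{W_l}(fg/\hat g)\|_2^2}{n}+\frac{2^{ld}}{n^2}\Big)
\]
from the Hoeffding decomposition and applies Chebyshev's inequality at each $l$. The role of the extra factors $2^{(j_0-l)d/2}$ and $2^{ld/4}$ built into $t_l^2$ is then purely to make the Chebyshev bounds summable:
\[
\sum_{l=J_0}^{j_0}\frac{\Var_{1,P}[T_n(l)]}{\zeta^2\, t_l^2}\;\leq\;\frac{C}{\zeta^2}\sum_{l=J_0}^{j_0}\big(2^{-(j_0-l)d/2}+2^{-ld/4}\big),
\]
which is bounded by a constant independent of $n$ via two geometric series, so a fixed large $\zeta$ suffices.

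Your exponential route has a genuine gap as written. The first tail term in Lemma~\ref{lemma_ustat_tail_use}, namely $e^{-Cnt_l^2}$, controls the linear part of the Hoeffding decomposition via a crude sup-norm Hoeffding bound using only $|H(W)|\leq C$. For your claim that ``every exponent is at least $c\zeta^p\log n$'' to hold for this term you would need $nt_l^2\gtrsim\log n$; but when $l$ is small and the signal $s_l:=\|\Pi_{W_l}(fg/\hat g)\|_2$ is small one has
\[
nt_l^2 \;=\; \zeta^2\Big(\frac{2^{(l+j_0)d/2}}{n}+2^{ld/4}s_l^2\Big),
\]
and the first summand is $o(1)$ (since $2^{j_0 d/2}=o(n)$) while the second can be arbitrarily small. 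Hence $e^{-Cnt_l^2}$ need not decay at all, and the union bound over $O(\log n)$ scales fails. Your monotonicity-in-$l$ heuristic does not rescue this, because the worst case for the linear term occurs at small $l$, not at $l=j_0$. The paper does remark that Lemma~\ref{lemma_ustat_tail_use} can in principle be used, but making that work would require replacing the sup-norm Hoeffding bound on the linear part by a Bernstein-type bound exploiting $\Var(H)\lesssim s_l^2$; it is not a black-box application. The Chebyshev argument sidesteps the issue entirely, since the variance already carries the factor $s_l^2$ and the inflated $t_l$ was designed precisely so that the resulting ratios form a convergent series.
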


%\tcr{Include the lemma about function properties for null and alternative} 

%We establish the following lemma about the signal strength at each resolution under the null and alternative. 
\begin{lemma}
\label{lemma:signal}
\begin{itemize}
\item Under $H_0$, $\sup_{J_0 \leq l \leq j_0} \Big( \| \Pi_{W_l}(f \frac{g}{\hat{g}}) \|_2 - ( \frac{M}{2^{l\beta_1}} + \frac{C^*}{B_L'} n^{-\frac{\gamma}{2\gamma + d}}) \Big) \leq 0$ with probability at least $(1- \alpha/4)$. 

\item 
Let $\{ \tau_l : J_0 \leq  l \leq j_0 \}$ be a sequence of numbers satisfying $\sum_{l = J_0}^{j_0} \tau_l \leq \frac{3}{4} \sqrt{D} n^ {- \frac{2\beta_2}{4\beta_2 + d}}$. Then under $(f,g) \in H_1$, with probability at least $1- \alpha/4$, there exists $J_0 \leq l \leq j_0$ such that 
$\| \Pi_{W_l} ( f \frac{g}{\hat{g}} ) \|_2 \geq \frac{M}{2^{l\beta_1}} + \tau_l$. 
\end{itemize}
\end{lemma}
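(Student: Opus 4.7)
The plan is to analyze both parts via the decomposition $fg/\hat g = f + f(g-\hat g)/\hat g$, which together with $0<f<1$ and $\hat g \geq B_L'$ gives $\|f(g-\hat g)/\hat g\|_2 \leq \|g-\hat g\|_2/B_L'$, and to condition throughout on the event from \eqref{eqn:cstar_def} that $\|g-\hat g\|_2 \leq C^* n^{-\gamma/(2\gamma+d)}$, which has probability at least $1-\alpha/4$. For the first bullet, under $H_0$ we have $f \in B_{2,\infty}^{\beta_1}(M)$, so by the Besov norm characterisation \eqref{eqn:besov_multidim} $\|\Pi_{W_l}(f)\|_2 \leq M/2^{l\beta_1}$ for every $l \geq J_0$. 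Applying the triangle inequality to the decomposition above and the $L_2$-contractivity of orthogonal projections yields $\|\Pi_{W_l}(fg/\hat g)\|_2 \leq M/2^{l\beta_1} + (C^*/B_L') n^{-\gamma/(2\gamma+d)}$ uniformly in $l$, which is the claim.

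For the second bullet I argue by contradiction. Suppose that for every $J_0 \leq l \leq j_0$, $\|\Pi_{W_l}(fg/\hat g)\|_2 < M/2^{l\beta_1} + \tau_l$. The same decomposition then gives
\[
\|\Pi_{W_l}(f)\|_2 \;<\; M/2^{l\beta_1} + \tau_l + (C^*/B_L') n^{-\gamma/(2\gamma+d)} \quad \text{for every } J_0 \leq l \leq j_0.
\]
I now construct $\tilde f \in B_{2,\infty}^{\beta_1}(M)$ by wavelet truncation: set $\Pi_{W_l}(\tilde f) = \Pi_{W_l}(f) \cdot \min\!\bigl(1,\,(M/2^{l\beta_1})/\|\Pi_{W_l}(f)\|_2\bigr)$ for $J_0 \leq l \leq j_0$, set $\Pi_{W_l}(\tilde f) = 0$ for $l > j_0$, and analogously truncate the coarsest scaling coefficients of $f$ to fit inside the bound $M/2^{J_0\beta_1}$; the latter is legitimate because $0 < f < 1$ makes these coefficients uniformly bounded and $J_0$ is a fixed constant. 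By construction $\tilde f \in B_{2,\infty}^{\beta_1}(M)$.

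It then remains to bound $\|f-\tilde f\|_2^2$, which decomposes (via Parseval) into a tail and an in-range contribution. The tail satisfies $\sum_{l > j_0} \|\Pi_{W_l}(f)\|_2^2 \lesssim M^2 \, 2^{-2 j_0 \beta_2} \lesssim M^2 n^{-4\beta_2/(4\beta_2+d)}$ using $f \in B_{2,\infty}^{\beta_2}(M)$ and the choice of $j_0$. Within $J_0 \leq l \leq j_0$, $\|\Pi_{W_l}(f-\tilde f)\|_2 = \max\!\bigl(0, \|\Pi_{W_l}(f)\|_2 - M/2^{l\beta_1}\bigr) \leq \tau_l + (C^*/B_L') n^{-\gamma/(2\gamma+d)}$, and using the elementary inequality $\sum_l a_l^2 \leq (\sum_l a_l)^2$ for $a_l \geq 0$, the in-range sum is bounded by $\bigl(\sum_l \tau_l + (j_0 - J_0 + 1)(C^*/B_L') n^{-\gamma/(2\gamma+d)}\bigr)^2$. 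The hypothesis $\gamma > 2 \beta_2$ is exactly equivalent to $n^{-\gamma/(2\gamma+d)} = o(n^{-2\beta_2/(4\beta_2+d)})$, so with only $O(\log n)$ terms in the sum the density-estimation contribution is $o(n^{-2\beta_2/(4\beta_2+d)})$. Combining this with $\sum \tau_l \leq (3/4)\sqrt D \, n^{-2\beta_2/(4\beta_2+d)}$ gives $\|f-\tilde f\|_2^2 \leq (C M^2 + 9D/16 + o(1)) n^{-4\beta_2/(4\beta_2+d)}$, which is strictly less than $\rho_n^2 = D\, n^{-4\beta_2/(4\beta_2+d)}$ once $D$ is chosen sufficiently large relative to $M$. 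This contradicts $\|f - B_{2,\infty}^{\beta_1}(M)\|_2 > \rho_n$, closing the argument.

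The hardest part is the tight final accounting: we must simultaneously control the density-estimation error, the $\tau_l$ budget, and the tail approximation error coming from the weaker $B_{2,\infty}^{\beta_2}$ smoothness, all within the same rate $n^{-4\beta_2/(4\beta_2+d)}$. The condition $\gamma > 2 \beta_2$ is used precisely to make the density-estimation contribution of strictly lower order, and a minor technicality is the handling of the coarsest scaling-function coefficients, which is absorbed by the uniform boundedness of $f$ and the fact that $J_0$ is fixed.
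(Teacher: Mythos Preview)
Your argument for the first bullet is identical to the paper's: both use the decomposition $fg/\hat g = f + f\hat\Delta$ with $\hat\Delta=(g-\hat g)/\hat g$, the Besov bound $\|\Pi_{W_l}(f)\|_2\le M/2^{l\beta_1}$, and contractivity of projections together with the event \eqref{eqn:cstar_def}.

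For the second bullet the two proofs are organized differently. The paper first shows, for any $h\in B_{2,\infty}^{\beta_1}(M)$, that $\|\Pi_{V_{j_0}}(fg/\hat g)-h\|_2\ge \tfrac34\rho_n$ (by combining $\|\Pi_{V_{j_0}}(f)-h\|_2\ge \tfrac56\rho_n$ from the $\beta_2$-bias bound with the smallness of $\|\Pi_{V_{j_0}}(f\hat\Delta)\|_2$ when $\gamma>2\beta_2$), and then invokes \cite[Lemma~4.1]{carpentier2015regularity} applied to $fg/\hat g$ to obtain a level $l$ with large $W_l$-projection. You instead transfer the hypothesis ``no large $W_l$-projection'' from $fg/\hat g$ back to $f$ via the same decomposition and then carry out the truncation/contradiction argument directly on $f$. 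This is essentially a self-contained reproof of Carpentier's lemma, and the ideas (tail control via $f\in B_{2,\infty}^{\beta_2}(M)$, in-range control via $\sum_l\tau_l$, and the role of $\gamma>2\beta_2$) line up perfectly with the paper's.

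There is, however, one genuine technical gap in your construction of $\tilde f$. With the Besov norm as defined in \eqref{eqn:besov_multidim}, membership in $B_{2,\infty}^{\beta_1}(M)$ requires the \emph{sum} $2^{J_0\beta_1}\|\langle\tilde f,\bpsi^0_{J_0,\cdot}\rangle\|_2+\sup_{l\ge J_0}2^{l\beta_1}\|\Pi_{W_l}(\tilde f)\|_2\le M$. Truncating each $\|\Pi_{W_l}(\tilde f)\|_2$ to $M/2^{l\beta_1}$ makes the sup term equal to $M$ already, so any nonzero scaling contribution forces $\tilde f\notin B_{2,\infty}^{\beta_1}(M)$; and if you also truncate the scaling coefficients, the resulting $L_2$-error is an $n$-independent constant, which cannot be ``absorbed'' into the $\rho_n^2$ budget as you claim. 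The paper sidesteps this by citing the external lemma (whose proof is written for a Besov seminorm without the additive scaling term). A clean fix in your framework is to keep the scaling coefficients of $f$ in $\tilde f$ and truncate the wavelet levels to $(M-2^{J_0\beta_1}\|\langle f,\bpsi^0_{J_0,\cdot}\rangle\|_2)_+/2^{l\beta_1}$; when this quantity is positive the argument goes through with only a constant-factor change in the in-range bound, and the case where it is nonpositive can be excluded by assuming $M$ is large enough relative to $2^{J_0\beta_1}\|\phi\|_1$ (a harmless convention given that all your constants already depend on $M,\beta_1$ and the wavelet), or by working with a Besov radius $cM$ for suitable $c>1$ in the separation hypothesis.
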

Before proving these two lemmas, we first complete the proof of the theorem assuming the validity of these two lemmas.

We consider the test $\Psi$ which rejects if at least one of the $T_n(l) > \tilde{C}_l$, where 
	\be
		\tilde{C}_l = \Big(\frac{M}{2^{l \beta_1}} + \frac{C^*}{B_L'} n^{- \frac{\gamma}{2 \gamma + d}} + \zeta \frac{2^{(l + j_0) d/8}}{\sqrt{n}} \Big)^2 \label{eqn:composite_cutoff}
	\ee
	for $\zeta$ suitably large, to be chosen appropriately.  We will use the following deviation bounds to control the Type I and II errors of this testing procedure.

We first control the Type I error of this procedure. Under $H_0$, with probability at least $1-\alpha$, for all $J_0 \leq l \leq j_0$,
\begin{align}
T_n(l) &\leq \| \Pi_{W_l} ( f \frac{g}{\hat{g}} ) \|_2^2  + \zeta \frac{ 2^{(l + j_0 ) d/4}}{n} + \zeta 2^{ld/8} \frac{ \| \Pi_{W_l} (f \frac{g}{\hat{g}}) \|_2 }{\sqrt{n}} \nonumber \\
&\leq \Big( \frac{M}{2^{l \beta_1}} + \frac{C^*}{B_L'} n^{- \frac{\gamma} {  2\gamma +d } } \Big)^2 + \zeta^2 \frac{2^{(l+ j_0)d/4}}{n} + 2 \zeta \frac{2^{(l+ j_0)d/8}}{\sqrt{n}} \Big( \frac{M}{2^{l \beta_1}} + \frac{C^*}{B_L'} n^{-\frac{\gamma}{ 2\gamma +d} } \Big) \nonumber \\
&\leq \Big(\frac{M}{2^{l\beta_1}} + \frac{C^*}{B_L'}n^{-\frac{\gamma}{2\gamma + d}} + \zeta \frac{2^{(l+ j_0) d/8}}{\sqrt{n}}\Big)^2 , \nonumber
\end{align}
where we assume that $\zeta>1$ without loss of generality. This controls the Type I error. 

To control the Type II error, we fix $(f,g) \in \Par(\beta_1,\beta_2, \gamma, \rho_n^2)$. Using Lemma \ref{lemma:signal}, there exits $J_0 \leq l \leq j_0$ such that 
\begin{align}
\| \Pi_{W_l}( f \frac{g}{\hat{g}} ) \|_2 \geq \frac{M}{2^{l\beta_1}} + \tau_l,  \nonumber 
\end{align}
where we choose $\tau_l = C_1 ( n^{- \frac{\gamma}{2\gamma + d} } + \frac{2^{(l+ j_0)d/8}}{\sqrt{n}})$. Thus with probability at least $(1- \alpha)$, we have,
\begin{align}
T_n(l) &\geq \| \Pi_{W_l} ( f \frac{g}{\hat{g}}) \|_2^2 - \zeta \sqrt{\frac{2^{(l+ j_0)d/2}}{n^2} + 2^{ld/4} \frac{ \| \Pi_{W_l}(f \frac{g}{\hat{g}})\|_2^2}{n} }. \nonumber \\
&\geq \| \Pi_{W_l}(f \frac{g}{\hat{g}}) \|_2 \Big( \| \Pi_{W_l}(f \frac{g}{\hat{g}}) \|_2 - \zeta \frac{2^{ld/8}}{\sqrt{n}} \Big) - \zeta \frac{2^{(l+j_0)d/4}}{n} \nonumber\\
&\geq \Big(\frac{M}{2^{l\beta_1}} + \tau_l  \Big) \Big( \frac{M}{2^{l \beta_1}} + \tau_l - \zeta \frac{2^{ld/8}}{\sqrt{n}} \Big) - \zeta \frac{2^{(l+j_0)d/4}}{n} \nonumber \\
&\geq \Big(\frac{M}{2^{l\beta_1}} + \tau_l  \Big) \Big( \frac{M}{2^{l \beta_1}} + \tau_l /2 \Big) - \zeta  \frac{2^{(l+j_0)d/4}}{n}, \nonumber 
\end{align}
where we choose $C_1 > 2 \zeta$. Now, choosing $C_1$ even larger, specifically $C_1^2 > 4 \zeta$, it follows that $\zeta \frac{2^{(l+j_0)d/4}}{n} \leq \tau_l^2/4$. Thus for some $ J_0 \leq l \leq j_0$, with probability at least $1 - \alpha$, 
\begin{align}
T_n(l) &\geq \Big(\frac{M}{2^{l\beta_1}} + \frac{\tau_l}{2} \Big)^2 \nonumber \\ 
&\geq \Big( \frac{M}{2^{l\beta_1}} + \frac{C^*}{B_L'} n^{- \frac{\gamma}{2\gamma + d}} + \zeta \frac{2^{(l+j_0)d/8}}{\sqrt{n}} \Big)^2 , \nonumber 
\end{align}
provided we choose $C_1 > 2 C^*/ B_L'$. This controls the Type II error of this test. 

The proof of the Theorem will now be completed with the proofs of Lemma \ref{lemma:ustat_deviation_bound} and Lemma \ref{lemma:signal} in the next two subsections.

\paragraph{Proof of Lemma \ref{lemma:ustat_deviation_bound}}
The proof of this lemma can indeed be completed by invoking Lemma \ref{lemma_ustat_tail_use}, which yields a much stronger control of the tail bound than demanded by Lemma \ref{lemma:ustat_deviation_bound}. However, for the sake of simplicity we provide simpler proof by simple union bound and Chebychev's inequality. The proof follows by an argument similar to \cite[Lemma 4.2]{carpentier2015regularity}. We have, for $J_0 \leq l \leq j_0$, 
\begin{align}
\E_{1,P}[T_n(l)] = \| \Pi_{W_l} ( f \frac{g}{\hat{g}} ) \|_2^2  \,\,\,\,\,\,\, \Var_{1,P}[T_n(l)] \leq C(S,B_L,B_U) \Big(\frac{\|\Pi_{W_l}(f \frac{g}{\hat{g}}) \|_2^2}{n} + \frac{2^{ld}}{n^2} \Big), \nonumber
\end{align}
%where $C(S)$ is a constant dependent on the regularity $S$ of the wavelet basis. 
The validity of the variance bound of the last display above, follows from Hoeffding's decomposition, boundedness of $f,g,\ghat$, and standard properties of compactly supported wavelet bases. Therefore, we have, using union bound and Chebychev's inequality, 
\begin{align}
&\P_P\Big[ \exists l, J_0 \leq l \leq j_0, | T_n(l) - \| \Pi_{W_l}(f \frac{g}{\hat{g}}) \|_2^2| > \zeta \sqrt{\frac{2^{(l+j_0)d/2}}{n^2} + 2^{ld/4} \frac{\|\Pi_{W_l}(f\frac{g}{\hat{g}}) \|_2^2}{n}} \Big] \nonumber \\
&\leq \sum_{l=J_0}^{j_0} \E_P\Big[\P_{1,P}\Big[ |T_n(l) - \| \Pi_{W_l} (f \frac{g}{\hat{g}}) \|_2^2| > \zeta \sqrt{\frac{2^{(l+j_0)d/2}}{n^2} + 2^{ld/4} \frac{\|\Pi_{W_l}(f\frac{g}{\hat{g}}) \|_2^2}{n}}\Big] \Big] \nonumber \\
&\leq \sum_{l=J_0}^{j_0} \E_P\Big [ \frac{\Var_{1,P}[T_n(l)] }{ \zeta^2 [\frac{2^{(l+j_0)d/2}}{n^2} + 2^{ld/4} \frac{\|\Pi_{W_l}(f\frac{g}{\hat{g}}) \|_2^2}{n}]} \Big] 
\leq \frac{C(S,B_L,B_U)}{\zeta^2} \sum_{l=J_0}^{j_0} [2^{- (j_0 - l )d/2}  + 2^{- ld /4} ] . \nonumber 
\end{align} 
The proof follows upon noting that $\sum_{l=J_0}^{j_0} 2^{- (j_0 -l ) d/2} \leq \frac{2^{d/2}}{2^{d/2}-1}$ and $\sum_{l=J_0}^{j_0} 2^{- ld/4} \leq \frac{ 2^{-J_0 d/4}}{1- 2^{-d/4}}$.

\paragraph{Proof of Lemma \ref{lemma:signal}} 
Let us consider $f \in \besov^{\beta_1}(M)$. Setting $\hat{\Delta} = \frac{g - \hat{g}}{\hat{g}}$, we have, for $J_0 \leq l \leq j_0$, 
\begin{align}
\| \Pi_{W_l}(f \frac{g}{\hat{g}} ) \|_2 \leq \| \Pi_{W_l}(f) \|_2 + \|\Pi_{W_l} ( f \hat{\Delta}) \|_2. \nonumber. 
\end{align}
For $f \in \besov^{\beta_1}(M)$, it follows from definition that $ \| \Pi_{W_l}(f) \|_2 \leq \frac{M}{2^{l \beta_1}}$. Recalling the definition of $C^*$ from \eqref{eqn:cstar_def},  the property of $\ghat$ from Theorem \ref{thm:estimation_regression}, and using the contraction property of the norm under projections, we have with probability at least $(1-\alpha/4)$, 

\begin{align}
\| \Pi_{W_l} (f \hat{\Delta} ) \|_2^2 \leq \| f \hat{\Delta} \|_2^2 \leq \Big(\frac{C^*}{B_L'}\Big)^2 n^{- \frac{2\gamma}{2\gamma +d}}. \nonumber
\end{align}
Combining, we get the desired result for functions $f \in \besov^{\beta_1}(M)$. 

Next,we consider functions $f \in \besov^{\beta_2}(M)$ such that $\| f - \besov^{\beta_1}(M) \|_2 > \rho_n$.  We first note that for any $h \in \besov^{\beta_1}(M)$, 
\begin{align}
\| \Pi_{V_{j_0}} ( f) - h \|_2 \geq \rho_n - \| f - \Pi_{V_{j_0}}(f) \|_2 \geq \frac{5}{6} \rho_n \nonumber 
\end{align}
if $D$ is chosen large enough. This implies that with probability at least $(1- \alpha/4)$, we have,
\begin{align}
\| \Pi_{V_{j_0}} ( f \frac{g}{\hat{g}} ) - h \|_2 \geq \frac{5}{6} \rho_n - \| \Pi_{V_{j_0}} ( f \hat{\Delta}) \|_2 \geq \frac{3}{4} \rho_n \nonumber
\end{align}
for $n$ sufficiently large, if $\gamma > 2 \beta_2$. Thus, if $\{\tau_l : J_0 \leq l \leq j_0 \}$ is a sequence of numbers 
such that $\frac{3}{4} \rho_n \geq \sum_{l = J_0}^{j_0} \tau_l$, following the argument of \cite[Lemma 4.1]{carpentier2015regularity}, it is easy to see that there 
exists $ J_0 \leq l \leq j_0$ such that $\| \Pi_{W_l} ( f \frac{g}{\hat{g}}) \|_2 \geq \frac{M}{2^{l\beta_1}} + \tau_l $. We choose 
\begin{align}
\tau_l = C_1 \Big( n ^{- \frac{\gamma}{2\gamma + d} } + \frac{2^{(l+j_0) d/8}}{\sqrt{n}} \Big) \nonumber, 
\end{align}
where $C_1$ will be chosen suitably. It is easy to see that for any chosen $C_1$,  $\sum_{l} \tau_l \leq \frac{3}{4} \rho_n$ can be enforced by choosing $D$ sufficiently large.

\subsection{Proof of Theorem \ref{theorem_adaptive_confidence_set}}
\label{sec:confidence_proof}
This proof idea is motivated by \cite{bull2013adaptive}. For $\beta\in [\beta_{\min},\beta_{\max}]$ and $2\beta_{\max}<\gamma<\gamma_{\max} $ consider $P=(f,g) \in \Par_n(M^*,M',\gamma) \cap \Par(\beta,\gamma)$.   Recall the finite grid $\{ \beta_1, \cdots, \beta_N\}$ used for the construction of the parameter spaces $\Par_n(M^*,M',\gamma)$.  We define $\F_n(M^*, j) = {\mbesov^{\beta_j}}(M, M^* \rho_n(\beta_j))$, $j = 1, \cdots, N-1$, $\F_n(M^*, N) = \besov^{\beta_N}(M)$. In addition, we set, for $j \in \{1, \cdots, N\}$, 
\begin{align}
\Par_n(j, M^*,M',\gamma) = \{ (h,g): h\in \F_n(M^*, j), 0<h< 1, g \in \besov^{\gamma}(M'), B_L < g < B_U, \int g(\bx) d\bx =1 \}. \nonumber 
\end{align}

Recall further the test $\Psi$ introduced in the proof of Theorem \ref{thm:testing} part \ref{thm:testing_composite}. Note that cut-off for the test, as in \eqref{eqn:composite_cutoff}, depends on the smoothness of $g$. However, a close inspection of the proof reveals that the only requirement on the smoothness of $g$ is that of being at least as large as twice the maximum smoothness of $f$. Since, our estimator $\ghat$ is an adaptive estimator of $g$, and $\gmin>2\beta_{\max}$, we can use $\gmin$ in the cut-off \eqref{eqn:composite_cutoff} for the test $\Psi$, maintaining the validity of the results. The test $\Psi$ with $\beta= \beta_j$ will be referred to as $\Psi(j)$. We first test the hypothesis $H_0: h \in \F_n(M^*,2)$ vs.  $H_1: h \in \F_n(M^*, 1)$ at level $\alpha/{4 N}$. If we reject $H_0$, we set $\hat{\beta}= \beta_1$ and stop. Otherwise we continue. At the $j^{th}$ step, $1<j < N-1$, we test $H_0: h \in \F_n(M^*, j+1)$ vs. $H_1: h \in \F_n(M^*,j)$ using the appropriate test $\Psi(j)$ at level $\alpha/{(4N)}$. If we reject $H_0$ at step $j$, we set $\hat{\beta}=\beta_j$ and stop. Otherwise we continue--- if none of the hypotheses are rejected, we set $\hat{\beta}= \beta_N$. 
This procedure determines the ``shell" in which $f$ belongs. Once this has been accomplished, we construct a confidence set using ideas introduced in \cite{robins2006adaptive}.

Without loss of generality, we assume we have data $\{\bx_i, y_i : 1\leq i \leq 3n\}$. We split the data into three equal parts--- the estimator $\fhat$ outlined in Theorem \ref{thm:estimation_regression} and $\hat{\beta}$ described above are constructed from the first, while the adaptive estimator of the design density $\ghat$ introduced in Theorem \ref{thm:estimation_regression} is constructed from the second part. We condition on the events $\{\hat{f}\in \besov^{\beta}(C')\}$ and $\{\hat{g}\in \besov^{\gamma}(C')\}$ which happen with probability at least $1-r_n$ (for $C'$ large enough depending on $M,M',B_U,B_L,\gmax$) uniformly over $\Par_n(M^*,M',\gamma)\cap \Par(\beta,\gamma)$, for some vanishing sequence $r_n$. Finally, we set $j_1 = \lceil \frac{2}{4\hat{\beta} +d} \log_2(n) \rceil$. Using the data $\{(\bx_i, y_i): 1\leq i \leq n\}$, we construct the following U-statistic. 
\be
\hat{U}_n = \frac{1}{n(n-1)} \sum_{2n+1\leq i_1 \neq i_2\leq 3n} \frac{(y_{i_1} - \hat{f}(\bx_{i_1}) )}{{\ghat({\bx_{i_1}})}} K_{V_{j_1}}\left(\bx_{i_1},\bx_{i_2}\right) \frac{(y_{i_2} - \hat{f} (\bx_{i_2}))}{{\ghat({\bx_{i_2}})}}. 
\ee
For any $h \in L^2$, we define
$\tau_n^2(h) = \frac{C_1}{n} \| h - \fhat_n \|_2^2 + \frac{C_2 2^{j_1}}{n(n-1)}$, 
for constants $C_1, C_2$ to be chosen later. Finally, we define the set 
\be
C_n(\beta) = \left\{ h : \| h - \hat{f} \|_2^2 \leq \hat{U}_n + C(M,B_L,B_U)\left(n^{-\frac{4\beta}{4\beta+d}}+n^{-\frac{\beta}{2\beta+d}}n^{-\frac{\gamma_{\min}}{2\gamma_{\min}+d}}\right) + z(\alpha) \tau_n(h)\right\},
\ee
with $z(\alpha) \geq 1/{\alpha}$. We will show that the set $C_n(\hat{\beta})$ is a confidence set with the desired properties. 
Throughout the rest of the proof $\E_{P, S}[\cdot]$ for $P\in \Par_n(M^*,M',\gamma)$ and $S\subset \{1,2,3\}$ will denote expectation under the distribution $P$ conditional on the subset of the data corresponding to the subset $S$. 

%We fix $P= (f,g) \in \Par_n(M^*,\gamma)$. 
Let $i_0 = i_0(f) \in \{1,\cdots, N\}$ denote the unique index such that $f \in \F_n(M^*, i_0)$. We prove that uniformly over $P\in \Par_n(M^*,M',\gamma)\cap \Par(\beta,\gamma)$,  $\P_P(\hat{\beta}\neq \beta_{i_0}) \leq \alpha/2$. Indeed,  $\hat{\beta} < \beta_{i_0}$ implies that one of the test $\Psi(j)$, $j=1, \cdots, i_0-1$ has rejected the true null hypothesis. Thus 
\be \sup_{P \in \Par_n(i_0, M^*,M',\gamma)} \P_P[\hat{\beta} < \beta_{i_0}] \leq \sum_{i < i_0} \sup_{P\in \Par_n(i_0, M^*,M',\gamma)}\E_P[\Psi(i)] < \frac{\alpha}{4}.\ee 
Similarly, $\hat{\beta} > \beta_{i_0}$ essentially implies that one of the tests $\Psi(i)$, $i> i_0$ fails to reject the null hypothesis. Therefore
\be \sup_{P\in \Par_n(i_0, M^*,M',\gamma)} \P_P[\hat{\beta} > \beta_{i_0}] \leq \sum_{i > i_0} \sup_{P \in \Par_n(i_0, M^*,M',\gamma) }\E_P[1- \Psi(i)] \leq \frac{\alpha}{4}.\ee
Combining, we have, $\sup_{P \in \Par_n(i_0, M^*,M',\gamma)}\P_P[\hat{\beta} \neq \beta_{i_0}] \leq \frac{\alpha}{2}$. Now, we have,
\begin{align}
\P_P[ f \in C_n(\hat{\beta})] \geq \P_P[ f \in C_n(\beta_{i_0})] - \frac{\alpha}{2}. \nonumber 
\end{align}
Thus honesty of the confidence set follows provided we establish that $\P_P[f \in \hat{C}_n(\beta_{i_0})] \geq 1- \alpha/2 $ uniformly on $\Par_n(i_0, M^*,M', \gamma)$. To this end, we note that 
setting $\deltahat=\frac{\ghat-g}{g}$, we have that for a  deterministic constant $C(M,B_L, B_U)$,
\begin{align*}
	\ & \E_{P,\{2,3\}}[ \hat{U}_n] = \| \Pi_{V_{j_1}} ( f  - \hat{f}_n)\fracg \|_2^2  \nonumber \\
	&= \left\|\Pi_{V_{j_1}}\left(\delf\right)\right\|_2^2+\left\|\Pi_{V_{j_1}}\left(\left(\delf\right)\deltahat\right)\right\|_2^2 \\&+ 2\left\langle \Pi_{V_{j_1}}\left(\delf\right), \Pi_{V_{j_1}}\left(\left(\delf\right)\deltahat\right) \right\rangle\nonumber\\
	&=\|\delf\|_2^2-\left\|\Pi_{V_{j_1}^{\perp}}\left(\delf\right)\right\|_2^2+\left\|\Pi_{V_{j_1}}\left(\left(\delf\right)\deltahat\right)\right\|_2^2\\&+2\left\langle \Pi_{V_{j_1}}\left(\delf\right), \Pi_{V_{j_1}}\left(\left(\delf\right)\deltahat\right) \right\rangle \nonumber\\
	& \geq \|\delf\|_2^2-C(M,M', \beta_{\max})n^{-\frac{4\beta_{i_0}}{4\beta_{i_0}+d}}\\&-2\left\|\Pi_{V_{j_1}}\left(\delf\right)\right\|_2\left\|\Pi_{V_{j_1}}\left(\left(\delf\right)\deltahat\right)\right\|_2 \nonumber\\
	& \geq \|\delf\|_2^2-C(M, M',\bmax, B_L, B_U)\Big(n^{-\frac{4\beta_{i_0}}{4\beta_{i_0}+d}}+n^{-\frac{\beta_{i_0}}{2\beta_{i_0}+d}}n^{-\frac{\gamma}{2\gamma+d}}\Big). \nonumber
\end{align*}
%{\tcr{in besov ball with high prob? + Track constants }}. 

Further, we have, using Hoeffding decomposition conditional on samples $\{2,3\}$, 
\begin{align}
	\hat{U}_n - E_{P,\{2,3\}}[\hat{U}_n]  = L + R&, \nonumber\\
	L = \frac{2}{n} \sum_{i=2n+1}^{3n} \sum_{k \in \mathcal{Z}_{j_1}} \sum_{ v \in \{0, 1\}^d} \Big[\frac{(y_i - \hat{f}_n (\bx_i) )}{\ghat(\bx_i)}&\psi_{j_1, k}^{v} (\bx_i) - \langle (\delf)\fracg , \psi _{j_1, k }^{v} \rangle \Big] \langle (\delf)\fracg , \psi _{j_1, k }^{v} \rangle, \nonumber\\
	R = \frac{1}{n(n-1)} \sum\limits_{2n+1 \leq i_1 \neq i_2 \leq 3n} \sum_{k \in \mathcal{Z}_{j_1}} \sum_{ v \in \{0, 1\}^d} &\Big[\frac{(y_{i_1} - \hat{f}_n (\bx_{i_1}) )}{\ghat(\bx_{i_1})}\psi_{j_1, k}^{v} (\bx_{i_1}) - \langle (\delf)\fracg , \psi _{j_1, k }^{v} \rangle \Big] \nonumber\\
	\times & \Big[\frac{(y_{i_2} - \hat{f}_n (\bx_{i_2}) )}{\ghat(\bx_{i_2})}\psi_{j_1, k}^{v} (\bx_{i_2}) - \langle (\delf)\fracg , \psi _{j_1, k }^{v} \rangle \Big].\nonumber
\end{align}
Using the orthogonality of the linear and non-linear term in Hoeffding's decomposition, we can bound the variance of $\hat{U}_n$ by the sum of the variances of $L$ and $R$. The variance of the linear term may be bounded by the second moment and using the boundedness of $f,\hat{f}_n,\ghat_n$, we have that 
\begin{align*}
	\var_{P,\{2,3\}}[L] \leq \frac{C(S,B_L,B_U)}{n} \| \Pi_{V_{j_1}} ( f -\hat{f}_n )\fracg \|_2^2 .
\end{align*}
By a proof similar to that of controlling $\Lambda_1$ in Lemma \ref{lemma_lamda_estimates}, we have that for a deterministic constant $C(\beta_{\max},\gamma_{\max}, M, B_L, B_U)$
\begin{align*}
	\var_{P, \{2,3\}}[R] \leq \frac{C(S, B_L, B_U) 2^{j_1}}{n(n-1)}.  
\end{align*}
Finally, we set 
\begin{align*}
	\tau_n(f)^2 =  \frac{C(S, B_L, B_U)}{n} \|  ( f -\hat{f}_n ) \|_2^2 + \frac{C(S, B_L, B_U) 2^{j_1}}{n(n-1)}. 
\end{align*}

By an application of Chebychev inequality, we have, 
\begin{align*}
	\P_{P,\{2,3\}}[ | \hat{U}_n - \E_{P,\{2,3\}}[ \hat{U}_n] | > C\tau_n(f)] & \leq \frac{\Var_{P, \{2,3\}}[ \hat{U}_n] } { C^2 \tau_n(f)^2} \leq \frac{1}{C^2}. 
\end{align*}
Thus for $C$ chosen appropriately, the above probability may be controlled at any pre-specified level $\alpha/2$.

Based on our construction, we have, 
	\begin{align*}
		& \P_{P}\left( f \in C_n(\beta_{i_0}) \right)  \nonumber \\
		&= \P_P\Big( \| f - \hat{f} \|_2^2 \leq \hat{U}_n + C(M, M',\bmax, B_L, B_U)\Big(n^{-\frac{4\beta_{i_0}}{4\beta_{i_0}+d}}+n^{-\frac{\beta_{i_0}}{2\beta_{i_0}+d}}n^{-\frac{\gamma_{\min}}{2\gamma_{\min}+d}}\Big) + z(\alpha) \tau_n(f) \Big) \nonumber\\
		&\geq \P_P [ | \hat{U}_n - \E_{P,\{2,3\}}[\hat{U}_n] | \leq z(\alpha) \tau_n(f) ] \geq (1- \frac{\alpha}{2}). \nonumber
	\end{align*}
	
	Finally, we establish that the $L^2$ diameter of this set adapts to the underlying smoothness. Assume $P\in \Par_n(M^*,M',\gamma)\cap \Par(\beta,\gamma)$ and the following calculations are uniform over this parameter space. The deterministic terms in the diameter term are respectively of the order $n^{-\frac{2\beta_{i_0}}{4\beta_{i_0} + d}} = o(n^{- \frac{\beta}{2\beta + d}})$ (as $\beta < \beta_{i_0+1} < 2 \beta_{i_0}$) and $n^{-\frac{\beta_{i_0}}{2\beta_{i_0}+d}}n^{-\frac{\gamma_{\min}}{2\gamma_{\min}+d}}$ which, by some tedious algebra, is also  $o(n^{- \frac{2\beta}{2\beta + d}})$ since $\beta < \beta_{i_0+1} < 2 \beta_{i_0}$, $\gamma_{\min}>2\beta_{i_0+1}$. The random part of $\tau_n(f)^2$ is also $o_P(n^{- \frac{2\beta}{2\beta + d}})$ as $\hat{f}_n$ is an adaptive estimator and $\|\fracg\|_{\infty}\leq \frac{ B_U}{B_L'(B_L)}=C(B_U,B_L)$. Finally, the leading term for the diameter is contributed by 
	\be
		\E_P[ \hat{U}_n] = \E_P\Big[ \| \Pi_{V_{j_1}} (f - \hat{f}_n)\fracg \|_2^2 \Big] \leq C(B_U,B_L) \|\delf\|_2^2,
	\ee
	which is $O_P(n^{- \frac{2\beta}{2\beta + d}})$ as $\hat{f}_n$ is adaptive. This completes the proof.

\subsection{Proof of Theorem \ref{thm:estimation_regression}}
\label{section:estimation_proof}

\subsubsection{Proof of Part \ref{thm:estimation_part1}}

Without loss of generality assume that we have data $\{ \mathbf{x}_i, y_i \}_{i =1}^{2n}$. We split it into two equal parts and use the second part to construct the estimator $\hat{g}$ of the design density $g$. Throughout the proof, $\E_{i,P}[\cdot]$ will denote the expectation with respect to the $i^{th}$ half of the sample, with the other half held fixed, under the distribution $P$. Throughout we choose the regularity of our wavelet bases to be larger than $\gamma_{\max}$ for the desired approximation and moment properties to hold. As a result our constants depend on $\gamma_{\max}$.

Let $2^{\jmin d}=\lfloor n^{\frac{1}{2\betamax/d+1}} \rfloor$, $2^{\jmax d}=\lfloor n^{\frac{1}{2\betamin/d+1}} \rfloor$, $2^{\lmin d}=\lfloor n^{\frac{1}{2\gmax/d+1}} \rfloor$, and $2^{\lmax d}=\lfloor n^{\frac{1}{2\gmin/d+1}} \rfloor$ and define $\mathcal{T}_1=[\jmin,\jmax]\cap \mathbb{N}$ and $\mathcal{T}_2=[\lmin,\lmax]\cap \mathbb{N}$. Let $\ghat_l=\frac{1}{n}\sum_{i=n+1}^{2n}K_{V_l}\left(\bx_i,x\right)$.

Now, let
\be
\lhat=\min\Big\{j\in \mathcal{T}_2: \ \|\ghat_j-\ghat_{l}\|_2\leq C^*\sqrt{\frac{2^{ld}}{n}}, \ \forall l \in \mathcal{T}_2 \ \text{s.t.} \ l \geq j \Big\}.
\ee
where $C^*$ is a constant  ({depending on $\gmax, B_U$}) that can be determined from the proof hereafter. Thereafter, consider the Lepski-type estimator $\gtilde:=\ghat_{\lhat}$ \citep{lepskii1991problem,lepskii1992asymptotically}. The following lemma states the mean squared properties of $\gtilde$.
\begin{lemma}\label{lemma_adaptive_gtilde}(Theorem 2 of \cite{bull2013adaptive}) For any $\gmin \leq \gamma\leq \gmax$,
$$\sup\limits_{P \in \mathcal{P}(\beta,\gamma)} \E_P\|\gtilde-g\|_2^2 \leq  \left(C\right)^{\frac{2d}{2\gamma+d}}n^{-\frac{2\gamma}{2\gamma+d}},$$ 
with a large enough positive constant $C$ depending on $M$ and $B_U$.	
	
\end{lemma}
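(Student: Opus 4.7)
}

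The plan is to carry out the standard Lepski-type argument, since $\hat{g}_l$ is the usual wavelet projection density estimator and $\tilde{g}=\hat{g}_{\hat{l}}$ is its Lepski-selected version. First I would record the elementary bias-variance decomposition of $\hat{g}_l$ for a fixed resolution $l$. The bias satisfies $\|\E_P[\hat{g}_l]-g\|_2^2=\|\Pi_{V_l^\perp}(g)\|_2^2\le C(M')\,2^{-2l\gamma}$ because $g\in B_{2,\infty}^{\gamma}(M')$ (via Lemma~\ref{lemma:density_truncation}-type arguments, together with the fact that the wavelet regularity $S$ exceeds $\gamma_{\max}$). The variance, by a direct computation using $g\le B_U$ and the standard identity $\int K_{V_l}(\bx,\cdot)^2 \le C\,2^{ld}$ for wavelet projection kernels, satisfies $\E_P\|\hat{g}_l-\E_P[\hat{g}_l]\|_2^2\le C(B_U,S)\,2^{ld}/n$.

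Next I would pick the oracle level $l^\ast\in\mathcal{T}_2$ such that $2^{l^\ast d}\asymp n^{d/(2\gamma+d)}$; the hypothesis $\gamma\in[\gamma_{\min},\gamma_{\max}]$ guarantees $l^\ast$ lies in $\mathcal{T}_2$. The argument then splits on whether $\hat{l}\le l^\ast$ or $\hat{l}>l^\ast$. On the event $\{\hat{l}\le l^\ast\}$, the definition of $\hat{l}$ gives directly $\|\hat{g}_{\hat{l}}-\hat{g}_{l^\ast}\|_2\le C^\ast\sqrt{2^{l^\ast d}/n}$, so a triangle inequality reduces the problem to bounding $\|\hat{g}_{l^\ast}-g\|_2$, which is handled by the bias-variance estimates above and yields a term of the correct order $n^{-2\gamma/(2\gamma+d)}$.

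On the event $\{\hat{l}>l^\ast\}$ the stopping rule must have failed at some level $j\le l^\ast$, i.e.\ there exists $l\ge l^\ast$ with $\|\hat{g}_{l^\ast}-\hat{g}_l\|_2>C^\ast\sqrt{2^{ld}/n}$. For each such pair $(l^\ast,l)$ the deterministic bias difference is $O(2^{-l^\ast\gamma})\ll \sqrt{2^{ld}/n}$, so the event forces a large deviation of the centered statistic $\|\hat{g}_{l^\ast}-\hat{g}_l\|_2^2-\E_P\|\hat{g}_{l^\ast}-\hat{g}_l\|_2^2$. I would control this probability by an exponential inequality for the quadratic wavelet functional, precisely the setting of Lemma~\ref{lemma_ustat_tail_use} (taking $L\equiv 1$, $Y$ absent, and using the difference kernel $K_{V_{l^\ast}}-K_{V_l}$ on the centered empirical measure). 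Choosing $C^\ast$ large enough depending on $B_U$ and $\gamma_{\max}$ makes $\P_P(\hat{l}>l^\ast)\le n^{-\theta}$ for any prescribed $\theta$; on this low-probability event the crude bound $\|\tilde{g}-g\|_\infty\le B_U+\|\hat{g}_{\hat{l}}\|_\infty\le C\,2^{l_{\max}d/2}$ together with $|\mathcal{T}_2|=O(\log n)$ contributes a negligible term to the risk.

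The main technical obstacle is the uniform-in-$(j,l)$ deviation bound in the last step: one needs the probability of the failure event at every pair in the logarithmic grid $\mathcal{T}_2$ to be summably small, without losing the correct constant. This is precisely where Lemma~\ref{lemma_ustat_tail_use} is invoked, and where the constant $C^\ast$ (and hence the final constant $C$) must be chosen depending on $M'$, $B_U$, and $\gamma_{\max}$. Combining the two cases yields $\E_P\|\tilde{g}-g\|_2^2\le C\,n^{-2\gamma/(2\gamma+d)}$ uniformly in $P\in\mathcal{P}(\beta,\gamma)$, which is exactly the statement of the lemma.
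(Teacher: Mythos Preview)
Your proposal is correct and follows the same Lepski-type skeleton as the paper: bias--variance bounds for $\hat g_l$, choice of the oracle level $l^\ast$, the split on $\{\hat l\le l^\ast\}$ versus $\{\hat l>l^\ast\}$, and a deviation bound to show the bad event is negligible. The only real difference is in how the bad event is handled. The paper writes
\[
\E_P\bigl[\|\tilde g-g\|_2^2\,\I(\hat l>l^\ast)\bigr]\le\sum_{l>l^\ast}\sqrt{\E_P\|\hat g_l-g\|_2^4}\,\sqrt{\P_P(\hat l=l)},
\]
bounds the fourth moment uniformly via Rosenthal's inequality, and controls $\P_P(\hat l=l)$ by a triangle inequality reducing to large deviations of $\|\hat g_l-\E_P[\hat g_l]\|_2$, for which it invokes the \emph{linear} Talagrand-type concentration of Lemma~\ref{lemma_linear_projection_tail_bound} rather than the U-statistic bound. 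Your alternative---applying Lemma~\ref{lemma_ustat_tail_use} to the squared norm and paying a crude sup-norm bound on the rare event---also works, since the failure probability decays like $e^{-c\,2^{ld/2}}$ and hence absorbs any polynomial factor; note however that $\|\hat g_l\|_\infty$ can be as large as $C\,2^{ld}$ (not $2^{ld/2}$), and that Lemma~\ref{lemma_ustat_tail_use} as stated applies to a single $K_{V_j}$ or $K_{W_j}$, so you would need to decompose $K_{V_l}-K_{V_{l^\ast}}=\sum K_{W_j}$ or observe that its proof extends verbatim. The paper's route via the linear concentration inequality is slightly more direct and gives a cleaner uniform constant, but both arguments are valid.
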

Although the proof of Lemma \ref{lemma_adaptive_gtilde} can be found in \cite{bull2013adaptive}, since we need certain steps of the proof in our subsequent analysis, we provide the proof again in the Appendix \ref{sec:proof_lemma_adaptive_gtilde}.

	Now we prove that $\liminf_{n \to \infty} \inf_{P \in \Par(\beta, \gamma)}\P_P[ \gtilde \in B_{2,\infty}^{\gamma}(C)] = 1$ for large enough constant $C$. Indeed, for any $C>0$ and $\lprime \geq J_0$, (letting for any $h \in L_2[0,1]^d$, $\|\langle h,\bpsi_{\lprime,\cdot}\rangle\|_2$ be the vector $L_2$ norm of the vector $\left(\langle h,\bpsi^v_{\lprime,\kprime}\rangle: \kprime \in \mathcal{Z}_{\lprime},v \in \left\{0,1\right\}^d-\{0\}^d\right)$. We have, 
	\be 
	\ & \P_P\left(2^{\lprime\gamma}\|\langle \gtilde, \bpsi_{\lprime,\cdot}\rangle \|_2>C\right)
	 = \sum_{l=\lmin}^{\lmax}\P_P\left(2^{\lprime\gamma}\|\langle \ghat_l, \bpsi_{\lprime,\cdot}\rangle \|_2>C, \lhat=l\right)\I\left(\lprime \leq l\right)\\
	&= \sum_{l=\lmin}^{\lstar}\P_P\left(2^{\lprime\gamma}\|\langle \ghat_l, \bpsi_{\lprime,\cdot}\rangle \|_2>C, \lhat=l\right)\I\left(\lprime \leq l\right)+\sum_{l=\lstar+1}^{\lmax}\P_P\left(2^{\lprime\gamma}\|\langle \ghat_l, \bpsi_{\lprime,\cdot}\rangle \|_2>C, \lhat=l\right)\I\left(\lprime \leq l\right)\\
	& \leq \sum_{l=\lmin}^{\lstar}\P_P\left(2^{\lprime\gamma}\|\langle \ghat_l, \bpsi_{\lprime,\cdot}\rangle \|_2>C\right)\I\left(\lprime \leq l\right)+\sum_{l=\lstar+1}^{\lmax}\P_P\left( \lhat=l\right)\I\left(\lprime \leq l\right)\\
	& \leq \sum_{l=\lmin}^{\lstar}\P_P\left(2^{\lprime\gamma}\|\langle \ghat_l, \bpsi_{\lprime,\cdot}\rangle \|_2>C\right)\I\left(\lprime \leq l\right)+\sum_{l>l^*}2e^{-C'2^{ld/2}}\I\left(\lprime \leq l\right)\label{eqn:ghat_fourier_lprime}
	\ee

where the last inequality follows from \eqref{eqn:wrongchoiceproblepski_g} for a suitable $C'$ (depending on $B_U$ and the wavelet basis choice). Now,
\be 
\ & \P_P\left(2^{\lprime\gamma}\|\langle \ghat_l, \bpsi_{\lprime,\cdot}\rangle \|_2>C\right)\\
&\leq \P_P\left(2^{\lprime\gamma}\|\langle \ghat_l, \bpsi_{\lprime,\cdot}\rangle-\E_P\left(\langle \ghat_l, \bpsi_{\lprime,\cdot}\rangle\right) \|_2>C/2\right)+\P_P\left(2^{\lprime\gamma}\|\E_P\left(\langle \ghat_l, \bpsi_{\lprime,\cdot}\rangle\right) \|_2>C/2\right)\\
&=\P_P\left(2^{\lprime\gamma}\|\langle \ghat_l, \bpsi_{\lprime,\cdot}\rangle-\E_P\left(\langle \ghat_l, \bpsi_{\lprime,\cdot}\rangle\right) \|_2>C/2\right)
\ee
if $C>2M'$. Therefore, from \eqref{eqn:ghat_fourier_lprime}, one has for any $C>2M'$, 
\be 
	\  \P_P\left(2^{\lprime\gamma}\|\langle \ghat, \bpsi_{\lprime,\cdot}\rangle \|_2>C\right)
	&\leq \sum_{l=\lmin}^{\lstar}\P_P\left(2^{\lprime\gamma}\|\langle \ghat_l, \bpsi_{\lprime,\cdot}\rangle-\E_P\left(\langle \ghat_l, \bpsi_{\lprime,\cdot}\rangle\right) \|_2>C/2\right)\I\left(\lprime \leq l\right)\\
	&+\sum_{l>l^*}2e^{-C'2^{ld/2}}\I\left(\lprime \leq l\right). \label{eqn:ghat_fourier_lprime_second}
	\ee
It remains to control $\|\langle \ghat_l, \bpsi_{\lprime,\cdot}\rangle-\E_P\left(\langle \ghat_l, \bpsi_{\lprime,\cdot}\rangle\right) \|_2$ appropriately. To this end, note that when $\lprime \leq l$, 
\be
\ & \|\langle \ghat_l, \bpsi_{\lprime,\cdot}\rangle-\E_P\left(\langle \ghat_l, \bpsi_{\lprime,\cdot}\rangle\right) \|_2^2
= \frac{1}{n^2}\sum_{i=n+1}^{2n}\sum\limits_{\kprime, v}\left(\bpsi_{\lprime,\kprime}^v(\bx_i)-\E_P\left(\bpsi_{\lprime,\kprime}^v(\bx_i)\right)\right)^2\\&+
\frac{1}{n^2}\sum\limits_{n+1\leq i_1\neq i_2\leq 2n}\sum\limits_{\kprime, v}\left(\bpsi_{\lprime,\kprime}^v(\bx_{i_1})-\E_P\left(\bpsi_{\lprime,\kprime}^v(\bx_{i_1})\right)\right)\left(\bpsi_{\lprime,\kprime}^v(\bx_{i_2})-\E_P\left(\bpsi_{\lprime,\kprime}^v(\bx_{i_2})\right)\right).
\ee
Note that the second term of the above summand is a type U-statistics of order $2$ analyzed in Lemma \ref{lemma_ustat_tail_use}. We make use of this fact below. 
\be
\ &\P_P\left(2^{2\lprime\gamma}\|\langle \ghat_l, \bpsi_{\lprime,\cdot}\rangle-\E_P\left(\langle \ghat_l, \bpsi_{\lprime,\cdot}\rangle\right) \|_2^2>C^2/4\right)
 \\&\leq \P_P\Big(\frac{1}{n^2}\sum_{i=n+1}^{2n}\sum\limits_{\kprime, v}\Big(\bpsi_{\lprime,\kprime}^v(\bx_i)-\E_P\left(\bpsi_{\lprime,\kprime}^v(\bx_i)\right)\Big)^2>\frac{C^2/8}{2^{2\lprime\gamma}}\Big)\\
&+\P_P\Big(\Big|\frac{1}{n^2}\sum\limits_{n+1\leq i_1\neq i_2\leq 2n}\sum\limits_{\kprime, v}\left(\bpsi_{\lprime,\kprime}^v(\bx_{i_1})-\E_P\left(\bpsi_{\lprime,\kprime}^v(\bx_{i_1})\right)\Big)\Big(\bpsi_{\lprime,\kprime}^v(\bx_{i_2})-\E_P\left(\bpsi_{\lprime,\kprime}^v(\bx_{i_2})\right)\right)\Big|>\frac{C^2/8}{2^{2\lprime\gamma}}\Big)\\
&=I+II.
\ee
To control $I$ note that for any fixed $\bx \in [0,1]^d$
$$\sum\limits_{\kprime, v}\left(\bpsi_{\lprime,\kprime}^v(\bx)-\E_P\left(\bpsi_{\lprime,\kprime}^v(\bx)\right)\right)^2\leq C(\bpsi_{0,0}^0, \bpsi_{0,0}^1,\gmax)2^{\lprime d},$$
and therefore 
$$\E_P\sum\limits_{\kprime, v}\left(\bpsi_{\lprime,\kprime}^v(\bx)-\E_P\left(\bpsi_{\lprime,\kprime}^v(\bx)\right)\right)^2 \leq C(\bpsi_{0,0}^0, \bpsi_{0,0}^1,\gmax)2^{\lprime d}.$$
Therefore by Hoeffding's Inequality, 
\be 
I&\leq \P_P\Big(\frac{1}{n}\sum_{i=n+1}^{2n}\sum\limits_{\kprime, v}\Big(\bpsi_{\lprime,\kprime}^v(\bx_i)-\E_P\left(\bpsi_{\lprime,\kprime}^v(\bx_i)\right)\Big)^2>\frac{nC^2/8}{2^{2\lprime\gamma}}\Big)\\
& \leq 2e^{-C(\bpsi_{0,0}^0, \bpsi_{0,0}^1,\gmax)\frac{n}{2^{2\lprime d}}\left(\frac{nC^2/8}{2^{2\lprime\gamma}}\right)^2 }.
\ee
Finally, arguing similar to Lemma \ref{lemma_ustat_tail_use} we also have that for a constant $C(B_U,\gmax)$
\be
II &\leq e^{-\frac{C t(\lprime)^2}{a_1(\lprime)^2}}+e^{-\frac{C t(\lprime)}{a_2(\lprime)}}+e^{-\frac{C \sqrt{t(\lprime)}}{\sqrt{a_3(\lprime)}}} 
\ee

where $t(\lprime)=\frac{C^2/8}{2^{2\lprime\gamma}}$, $a_1(\lprime)= \frac{1}{n-1}2^{\frac{\lprime d}{2}}$, $a_2(\lprime)=\frac{1}{n-1}\Big(\sqrt{\frac{2^{\lprime d}}{n}}+1\Big)$, $a_3(\lprime)=\frac{1}{n-1}\Big(\sqrt{\frac{2^{\lprime d}}{n}}+\frac{2^{\lprime d}}{n}\Big)$.

Therefore, for $C>2M'$
\be
& \sum_{\lprime\geq J_0}\P_P\left(2^{\lprime\gamma}\|\langle \gtilde, \bpsi_{\lprime,\cdot}\rangle \|_2>C\right) \nonumber\\
 & \leq \sum_{\lprime\geq J_0}\sum_{l=\lmin}^{\lstar}\P_P\left(2^{\lprime\gamma}\|\langle \ghat_l, \bpsi_{\lprime,\cdot}\rangle-\E_P\left(\langle \ghat_l, \bpsi_{\lprime,\cdot}\rangle\right) \|_2>C/2\right)\I\left(\lprime \leq l\right) +\sum_{\lprime\geq J_0}\sum_{l>l^*}2e^{-C'2^{ld/2}}\I\left(\lprime \leq l\right)\\
& \leq \sum_{l=\lmin}^{\lstar} \sum_{\lprime=J_0}^{l}2e^{-C(\bpsi_{0,0}^0, \bpsi_{0,0}^1,\gmax)\frac{n}{2^{2\lprime d}}\left(\frac{nC^2/8}{2^{2\lprime\gamma}}\right)^2 }  \nonumber\\
&+	\sum_{l=\lmin}^{\lstar} \sum_{\lprime=J_0}^{l}\Big(e^{-\frac{C t(\lprime)^2}{a_1(\lprime)^2}}+e^{-\frac{C t(\lprime)}{a_2(\lprime)}} 
+e^{-\frac{C \sqrt{t(\lprime)}}{\sqrt{a_3(\lprime)}}} \Big)+\sum_{l=\lmin}^{\lstar} \sum_{\lprime=J_0}^{l}2e^{-C'2^{ld/2}}.
\label{eqn:gtilde_smoothness_prob}
\ee
Some tedious calculations now show that the last term in the display above converges to $0$ uniformly in $P \in \Par(\beta,\gamma)$ as $n \rightarrow \infty$. This, along with the definition of $\besov^{\gamma}(C)$, completes the proof of $\liminf_{n \to \infty} \inf_{P \in \Par(\beta, \gamma)}\P_P[ \gtilde \in B_{2,\infty}^{\gamma}(C)] = 1$ for sufficiently large constant $C$ depending on $(M',B_U,\gmax)$.
	
	However this $\gtilde$ does not satisfy the desired point-wise bounds. To achieve this let $\psi$ be a $C^{\infty}$ function such that $\psi(x) |_{[B_L, B_U]} \equiv x$ while $\frac{B_L}{2} \leq \psi(\bx) \leq 2B_U$ for all $x$. Finally, consider the estimator $\hat{g}(\bx) = \psi(\tilde{g}(\bx))$. We note that $(g(\bx) - \hat{g}(\bx))^2 \leq (g(\bx) - \tilde{g}(\bx))^2$--- thus $\hat{g}$ is adaptive to the smoothness of the design density. The boundedness of $\ghat$ follows immediately from the construction. Finally, we wish to show that almost surely, the constructed estimator belongs to the Besov space with the same smoothness, possibly of a different radius. This is captured by the next lemma. The proof is deferred to Section \ref{proof_density_truncation}. 
	
	\begin{lemma}
		\label{lemma:density_truncation}
		For all $h \in \besovb(M)$, $\psi(h) \in \besovb(C(M, \beta))$, where $C(M,\beta)$ is a universal constant dependent only on $M,\beta$ and independent of $h \in \besovb(M)$. 
	\end{lemma}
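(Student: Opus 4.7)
The plan is to replace the wavelet-coefficient definition of $B_{2,\infty}^{\beta}$ in \eqref{eqn:besov_multidim} by the equivalent characterization via the $L^2$ modulus of smoothness, and then propagate that modulus through the composition with $\psi$. Concretely, for any integer $r$ with $\beta < r \leq S$, one has the norm equivalence
\begin{equation*}
\|h\|_{B_{2,\infty}^{\beta}} \asymp \|h\|_2 + \sup_{0 < \eta \leq 1} \eta^{-\beta}\,\omega_r(h,\eta)_2,
\end{equation*}
where $\omega_r(h,\eta)_2$ denotes the $r$-th order $L^2$ modulus of smoothness on $[0,1]^d$. Because $\psi$ is uniformly bounded by $2B_U$, $\|\psi(h)\|_2$ is controlled trivially, so the whole argument reduces to bounding $\omega_r(\psi\circ h,\eta)_2$ by a constant multiple of $\eta^{\beta}$.

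For $\beta\in(0,1]$ I would take $r=1$: the Lipschitz estimate $|\psi(h(x+\eta))-\psi(h(x))| \leq \|\psi'\|_\infty |h(x+\eta)-h(x)|$, which holds globally because $\psi\in C^\infty(\RR)$ with bounded derivatives, gives $\omega_1(\psi\circ h,\eta)_2 \leq \|\psi'\|_\infty \omega_1(h,\eta)_2 \leq \|\psi'\|_\infty M \eta^{\beta}$. Combined with the $L^2$ bound on $\psi(h)$, this yields $\|\psi(h)\|_{B_{2,\infty}^{\beta}} \leq C(M,\beta,B_U)$ at once.

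For general $\beta>0$ set $r=\lfloor\beta\rfloor+1$ and expand $\Delta_\eta^r(\psi\circ h)(x)$ by the (multivariate) Fa\`a di Bruno formula as a finite linear combination of terms of the form $\psi^{(|\pi|)}(h(\xi_\pi))\prod_{B\in\pi}\Delta_\eta^{|B|}h(x_{B,\pi})$, indexed by set partitions $\pi$ of $\{1,\dots,r\}$. Uniform boundedness of every derivative of $\psi$ renders the leading factor harmless. Each product of differences is then bounded in $L^2$ by keeping one factor in $L^2$ (contributing the crucial $\eta^{\beta}$) and the remaining factors in $L^p$ for large $p$, using Jackson-type interpolation inequalities $\omega_a(h,\eta)_p \lesssim \eta^{a\min(\beta,a)/r}$ together with the embedding $B_{2,\infty}^{\beta}\hookrightarrow L^p$ (the latter supplemented, when necessary, by the trivial $L^\infty$ bound on $\psi(h)$ itself, which is the whole point of composing with a bounded truncation). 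Summing over partitions yields $\omega_r(\psi\circ h,\eta)_2 \leq C(M,\beta,\psi)\,\eta^{\beta}$.

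The main obstacle is the Fa\`a di Bruno step for $\beta>1$: one must choose the right combination of $L^p$ norms for the mixed differences of intermediate orders so that their product produces exactly the $\eta^{\beta}$ scaling. This is resolved by interpolation inequalities between $B_{2,\infty}^{\beta}$ and $L^\infty$; boundedness of $\psi$ is what makes those interpolations effective without an a priori $L^\infty$ control on $h$. The resulting constant depends only on $M$, $\beta$, $B_L$, $B_U$ and the fixed function $\psi$, as required.
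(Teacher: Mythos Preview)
Your approach via the modulus-of-smoothness characterization of $B_{2,\infty}^\beta$ is exactly what the paper does, and the case $0<\beta\leq 1$ via the Lipschitz bound on $\psi$ coincides with the paper's argument. For $\beta>1$, however, the paper takes a considerably shorter route than your Fa\`a di Bruno expansion: it Taylor-expands $\psi$ only to \emph{second} order around $h(x)$, so that
\[
\Delta_t^r(\psi\circ h)(x)=\psi'(h(x))\,\Delta_t^r h(x)+\Sigma(x,t),
\]
where $\Sigma$ is a finite sum of terms of the form $\tfrac12\psi''(\xi_k(x))(\Delta_{kt}h(x))^2$. A single Sobolev-type embedding $B_{2,\infty}^\beta\hookrightarrow B_{\infty,\infty}^{\beta-1/2}$ then gives $|\Delta_{kt}h(x)|\lesssim |t|^{\beta-1/2}$ pointwise, hence $\|\Sigma(\cdot,t)\|_2\lesssim |t|^{2\beta-1}$; since $2\beta-1>\beta$ for $\beta>1$, the contribution of $\Sigma$ to $\sup_{0<t<1}t^{-\beta}\omega_r(\psi\circ h,t)_2$ is bounded. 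No higher-order chain rule, no product/H\"older estimates, and no interpolation inequalities are needed.

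Your route would also succeed in principle---it is the template behind general composition theorems on Besov spaces---but two points in the sketch are imprecise. First, Fa\`a di Bruno is a formula for derivatives of compositions, not for iterated finite differences; what you actually need is a Taylor expansion of $\psi$ combined with the multinomial structure of $\Delta_t^r$, and the resulting terms are not quite of the clean partition form you describe. Second, the inequality $\omega_a(h,\eta)_p\lesssim\eta^{a\min(\beta,a)/r}$ is not standard as stated; the correct input would be the embedding $B_{2,\infty}^\beta\hookrightarrow B_{p,\infty}^{\beta-d(1/2-1/p)}$ together with the associated modulus bound, and the bookkeeping to make the exponents add up to $\beta$ is exactly the delicate part you flag but do not carry out. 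The paper's second-order trick sidesteps all of this.
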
 
	
	The lower bound of the minimax estimation error follows in our case by the results of \cite{bull2013adaptive}, by setting $f\equiv 0$ in the prior used for the construction of the lower bound.

	 \subsubsection{Proof of Part \ref{thm:estimation_part2}}
	 For the construction of $\fhat$, construct the estimator $\hat{g}$ of the design density $g$ as above from second part of the sample and let $\hat{f}_j(\bx)=\frac{1}{n}\sum\limits_{i=1}^n \frac{y_i}{\ghat(\bx_i)}K_{V_j}\left(\bx_i, \bx\right)$. %(\tcr{Check if we need to adapt over an interval of smoothness of $g$}).  
	Now, let
	\be
	\jhat=\min\Big\{j\in \mathcal{T}_1: \ \|\hat{f}_j-\hat{f}_{l}\|_2\leq C^{**}\sqrt{\frac{2^{ld}}{n}}, \ \forall l \in \mathcal{T}_1 \ \text{s.t.} \ l \geq j \Big\}.
	\ee
	where $C^{**}$ is a suitable constant (depending on $B_U,B_L,\gamma_{\max}$) to be decided later.  Thereafter, consider the estimator $\tilde{f}:=\hat{f}_{\jhat}$. 	
	
	Let $\jstar=\min\left\{j:C_{1*} 2^{-j\beta}\leq C_{2*}\sqrt{\frac{2^{jd}}{n}} \right\}$,and note that for any $\bx \in [0,1]^d$, 
	\be
	\int |\E_{P,1}\left(\fhat_j(\bx)\right)-f(\bx)|^2 d\bx
	&=\int \Big|\Pi\Big(f\frac{g}{\ghat}|V_j\Big)(\bx)-f(\bx)\Big|^2 d\bx\\
	&=\int \Big|\Pi\Big(f\Big(\frac{g}{\ghat}-1\Big)|V_j\Big)(\bx)-\Pi\left(f|V_j^{\perp}\right)(\bx)\Big|_2^2 d\bx\\
	&=\int \left|\Pi\left(f\left(\frac{g}{\ghat}-1\right)|V_j\right)(\bx)\right|^2d\bx+\int \left|\Pi\left(f|V_j^{\perp}\right)(\bx)\right|_2^2 d\bx\\
	&=\Big\|\Pi\Big(f\Big(\frac{g}{\ghat}-1\Big)|V_j\Big)\Big\|_2^2+\|\Pi\left(f|V_j^{\perp}\right)\|_2^2\\
	& \leq \Big\|f\left(\frac{g}{\ghat}-1\right)\Big\|_2^2+C_1^2 M^2 2^{-j\beta}.
	%& \leq 2\left[\int \left|\Pi\left(f\frac{g}{\ghat}|V_j\right)(\bx)-f(\bx)\frac{g}{\ghat}(\bx)\right|^2 d\bx+\int \left|f(\bx)\frac{g}{\ghat}(\bx)-f(\bx)\right|^2 d\bx\right]
	\ee
	Therefore, 
	\be 
	\  \E_{P,2}\int |\E_{P,1}\left(\fhat_j(\bx)\right)-f(\bx)|^2 d\bx
	 &\leq \E_{P,2}\Big\|f\Big(\frac{g}{\ghat}-1\Big)\Big\|_2^2+C_1^2 M^2 2^{-j\beta}\\
	& \leq \Big(\frac{B_U}{B_L^{'}}\Big)^2\left(C(M',B_U)\right)^{\frac{2}{2\gamma+d}}n^{-\frac{2\gamma}{2\gamma+d}}+C_1^2 M^2 2^{-j\beta}. 
	 \label{eqn:biasdecomposition_initial}
	\ee
     Since $\gamma_{\min}>\beta_{\max}$, we have from the definition of $\jstar$
	 \eqref{eqn:biasdecomposition_initial} that there exists a constant $C_{1*}$ depending on $M,M',B_U,B_L,\gmax$ such that 
	\be
	\E_{P,2}\int |\E_{P,1}\left(\fhat_{\jstar}(\bx)\right)-f(\bx)|^2 d\bx& \leq C_{1*}^2 2^{-2\jstar\beta} . \label{eqn:qbias_lepski_regression}
	\ee

	Also by Rosenthal's (Lemma \ref{lemma_linear_tail_bound}) and Jensen's Inequality, there exists a constant $C(q)$ for $q\geq 2$ such that
	\be
	\ & \E_{P,1}\left(|\fhat_j(\bx)-\E_{P,1}\left(\fhat_j(\bx)\right)|^q\right)\\
	&\leq \frac{C(q)}{n^q}\Big[\sum_{i=n+1}^{2n}\E_{P,1}\Big(\Big|\frac{y_i}{\ghat(\bx_i)}K_{V_j}(\bx_i,\bx)\Big|^q\Big)+\Big(\sum_{i=n+1}^{2n}\E_{P,1}\Big(\Big|\frac{y_i}{\ghat(\bx_i)}K_{V_j}(\bx_i,\bx)\Big|^2\Big)\Big)^{q/2}\Big]\\
	& \leq \frac{C_{2*}^q/2}{n^q}\times \left[n\left(2^{jd}\right)^{q-1}+n^{q/2}\left(2^{jd}\right)^{q/2}\right],\label{eqn:fhat_rosenthal}
	\ee
	where the last inequality in the above display follows by using standard facts about compactly supported wavelet basis having regularity larger than $\gamma_{\max}$ \citep{hardle1998wavelets} and the fact that the constructed $\ghat$ from the second half of the sample lies point-wise in $\left[\frac{B_L}{2},2B_U\right]$. The constant $C_{2*}$ therefore depends on $q$, the wavelet basis used, $B_U$ and $B_L$. Therefore, by the choice of $j \in \mathcal{T}_1$, we have that for all $\bx \in [0,1]^d$, 
	\be
	\E_{P,1}\left(|\fhat_j(\bx)-\E_{P,1}\left(\fhat_j(\bx)\right)|^q\right)&\leq C_{2*}^q\left(\frac{2^{jd}}{n}\right)^{q/2}.\label{eqn:qvariance_lepski_regression}
	\ee
	Therefore, using \eqref{eqn:qbias_lepski_regression} and \eqref{eqn:qvariance_lepski_regression}, we have the following bias-variance decomposition bound.
	\be 
	\  \E_P\left(\|\fhat_{\jstar}-f\|_2^2\right)&=\E_{P,2}\int\E_{P,1}\left(|\fhat_{\jstar}(\bx)-f(\bx)|^2\right)d\bx\\
	&= \E_{P,2}\Big[\int\E_{P,1}\left(|\fhat_{\jstar}(\bx)-\E_{P,1}\left(\fhat_{\jstar}(\bx)\right)|^2\right)d\bx+\int\E_{P,1}\left(|\E_{P,1}\left(\fhat_{\jstar}(\bx)\right)-f(\bx)|^2\right)d\bx\Big]\\
	& \leq C_{1*}^2  2^{-2\jstar\beta}+C_{2*}^2\Big(\frac{2^{\jstar d}}{n}\Big)\leq 2^{d+1}(C_{1*}^2+C_{2*}^2)n^{-\frac{2\beta}{2\beta+d}}.
	\ee

%	 This implies that
%		\be
%		\E_{P,2}\left\|\E_{P,1}\left(\fhat_{\jstar}\right)-f\right\|_2^2\leq C_{1*}^2 2^{-2\jstar  \beta}\leq C_{1*}^2\Big(\sqrt{\frac{2^{\jstar d}}{n}}\Big)^2\leq 2^dC_{1*}^2  n^{-\frac{2\beta}{2\beta+d}}.
%		\ee
		
		 Therefore, by definition of $\jhat$ and $\jstar$,
		\be
		\ \E_P\left(\|\ftilde-f\|_2^2\I\left(\jhat \leq \jstar\right)\right)
		%&=\E_{P,2}\E_{P,1}\left(\|\ftilde-g\|_q^q\I\left(\jhat \leq \jstar\right)\right)\\
		&\leq  2\E_{P}\left(\|\ftilde-\fhat_{\jstar}\|_2^2\I\left(\jhat \leq \jstar\right)\right)+2 \E_{P}\left(\|\fhat_{\jstar}-f\|_2^2\right)\\
		%& \leq 2\left[C^{**}\left(\frac{2^{\jstar d}}{n}\right)+2\times 2\left(\frac{2^{\jstar d}}{n}\right)\right]\\
		&\leq 2((C^{**})^2+2^{d+1}(C_{1*}^2+C_{2*}^2))n^{-\frac{2\beta}{2\beta+d}}. \label{eqn:lessthanlstar_regression}
		\ee
	By Cauchy-Schwarz inequality,
		\be
		\E_P\left(\|\ftilde-f\|_2^2\I\left(\jhat > \jstar\right)\right)& \leq \sum\limits_{j=\jstar+1}^{\jmax}\sqrt{\E_{P} \left(\|\fhat_j-f\|_2^{4}\right)}\sqrt{\mathbb{P}_{P}\left(\jhat=j\right)}.  \label{eqn:wrongchoicelepski_regression}
		\ee

		 Now, by \eqref{eqn:fhat_rosenthal} with $q=2$
		\be 
		\  \E_{P,1}  \left(\fhat_j(\bx)-\E_{P,1} \left(\fhat_j(\bx)\right)\right)^4  
		%&\leq C \frac{1}{n^4}\int  \left[\sum_{i=1}^n\E_{P,1} \left(\frac{y_i}{\ghat(\bx_i)}K_{V_l}\left(\bx_i,\bx\right)\right)^4+\left\{\sum_{i=1}^n \E_{P,1} \left(\frac{y_i}{\ghat(\bx_i)}K_{V_l}\left(\bx_i,\bx\right)\right)^2 \right\}^2\right]\\
		& \leq C(B_U, B_L,\gamma_{\max}) \Big[\Big(\frac{2^{jd}}{n}\Big)^3+\Big(\frac{2^{jd}}{n}\Big)^2\Big] \\
		&\leq C(B_U, B_L,\gamma_{\max}) \label{eqn:fourthmomentlepski}
		\ee
		by our choice of $2^{\jmax d}$. Also, by standard arguments \cite{hardle1998wavelets}, $|\E_{P,1} \left(\fhat_j(\bx)\right)|=|\Pi\left(f\frac{g}{\ghat}|V_j\right)(\bx)|\leq C(B_U,B_L,\gamma_{\max})$ for all $\bx \in [0,1]^d$.
		Therefore by \eqref{eqn:fourthmomentlepski},
				\be
				\E_P \left(\|\fhat_j-f\|_2^4\right) & \leq 8 \E_{P,2}\Big[\int \E_{P,1} \Big(\fhat_j(\bx)-\E_{P,1} \left(\fhat_j(\bx)\right)\Big)^4 d\bx+\int\left(\E_{P,1} \left(\fhat_j(\bx)\right)-f(\bx) \right)^4d\bx\Big]\\& \leq C(B_U,B_L,\gmax).
				\ee
		%Now, by \eqref{eqn:qbias_lepski_regression}, \eqref{eqn:qvariance_lepski_regression}, choice of $j \in \mathcal{T}_1$, and Jensen's Inequality
		%\be 
		%\E_{P} \left(\|\fhat_j-f\|_2^{4}\right)& = \E_{P}\left(\int |\fhat_j(\bx)-f(\bx)|^2 d\bx\right)^2\\
		%& \leq \E_{P}\int |\fhat_j(\bx)-f(\bx)|^{4} d\bx\\
		%& \leq C_{1*}^{2q}2^{-2lq\beta}+C_{2*}^{2q}\left(\frac{2^{jd}}{n}\right)^{q}\\
		%& \leq C_{1*}^{2q}+C_{2*}^{2q}
		%\ee	
Also, for any constant $C^{''}$
\be
\ &\mathbb{P}_P\left(\jhat=l\right)  \leq \sum_{j>\jstar}\mathbb{P}_P\Big(\|\fhat_j-\fhat_{\jstar}\|_2> C^{**}\sqrt{\frac{2^{jd}}{n}}\Big)\\
&\leq \sum_{j>\jstar}\E_{P,2}\left\{\begin{array}{c}\mathbb{P}_{P,1}\left(\|\fhat_{\jstar}-\E_{P,1}\left(\fhat_{\jstar}\right)\|_2> \frac{C^{**}}{2}\sqrt{\frac{2^{jd}}{n}}-\|\E_{P,1}\left(\fhat_{\jstar}\right)-\E_{P,1}\left(\fhat_{j}\right)\|_2\right)\\+\mathbb{P}_{P,1}\left(\|\fhat_j-\E_{P,1} \left(\fhat_{j}\right)\|_2> \frac{C^{**}}{2}\sqrt{\frac{2^{jd}}{n}}\right)\end{array}\right\}\\
& \leq \sum_{j>\jstar}\E_{P,2}\left\{\begin{array}{c}\mathbb{P}_{P,1}\left(\|\fhat_{\jstar}-\E_{P,1}\left(\fhat_{\jstar}\right)\|_2> \frac{C^{**}}{2}\sqrt{\frac{2^{jd}}{n}}-\|\Pi\left(f\frac{g}{\hat{g}}|V_{\jstar}\right)-\Pi\left(f\frac{g}{\hat{g}}|V_{j}\right)\|_2\right)\\+\mathbb{P}_{P,1}\left(\|\fhat_j-\E_{P,1} \left(\fhat_{j}\right)\|_2> \frac{C^{**}}{2}\sqrt{\frac{2^{jd}}{n}}\right)\end{array}\right\}\\
& \leq \sum_{j>\jstar}\E_{P,2}\left\{\begin{array}{c}\mathbb{P}_{P,1}\left(\|\fhat_{\jstar}-\E_{P,1}\left(\fhat_{\jstar}\right)\|_2> \left(\frac{C^{**}}{2}\sqrt{\frac{2^{j d}}{n}}-C^{''}\sqrt{\frac{2^{\jstar d}}{n}}\right)\right)\\+\mathbb{P}_{P,1}\left(\|\fhat_j-\E_{P,1} \left(\fhat_{j}\right)\|_2> \frac{C^{**}}{2}\sqrt{\frac{2^{jd}}{n}}\right)\\+\I\left(\|\Pi\left(f\frac{g}{\hat{g}}|V_{\jstar}\right)-\Pi\left(f\frac{g}{\hat{g}}|V_{j}\right)\|_2>C^{''}\sqrt{\frac{2^{j^*d}}{n}}\right)\end{array}\right\}\\
&\leq \sum_{j>j^*}2e^{-C2^{jd/2}}+\sum_{j>\jstar}\P_{P,2}\Big(\|\Pi\Big(f\frac{g}{\hat{g}}|V_{\jstar}\Big)-\Pi\Big(f\frac{g}{\hat{g}}|V_{j}\Big)\|_2>C^{''}\sqrt{\frac{2^{j^*d}}{n}}\Big), \label{eqn:wrongchoiceproblepski_regression}
\ee	 
where the inequality in the last display holds by Lemma \ref{lemma_linear_projection_tail_bound} since $\max\left\{y,\frac{1}{\ghat(\bx)}\right\}\leq C(B_L)$, for a $C>0$  (depending on $M, M', B_U, B_L,\gmax, C^{''}, C^{**}$) if $C^{**}$ is chosen large enough (depending on $M, M', B_U, B_L,\gmax$) such that $C^{**}>2C^{''}$. $C''$ will be chosen later in the proof to be large enough depending on the known parameters of the problem, which in turn will imply that $C^{**}$ can be chosen large enough depending on the known parameters of the problem as well. Finally,
	\be
	\ &  \sum_{j>\jstar}\P_{P,2}\Big(\|\Pi\Big(f\frac{g}{\hat{g}}|V_{\jstar}\Big)-\Pi\Big(f\frac{g}{\hat{g}}|V_{j}\Big)\|_2>C^{''}\sqrt{\frac{2^{j^*d}}{n}}\Big)\\& \leq \sum_{j>\jstar}\P_{P,2}\Big(\|\Pi\left(f|V_{\jstar}\right)-\Pi\left(f|V_{j}\right)\|_2>\frac{C^{''}}{2}\sqrt{\frac{2^{j^*d}}{n}}\Big)\\
	&+\sum_{j>\jstar}\P_{P,2}\Big(\|\Pi\Big(f\Big(\frac{g}{\hat{g}}-1\Big)|V_{\jstar}\Big)-\Pi\Big(f\Big(\frac{g}{\hat{g}}-1\Big)|V_{j}\Big)\|_2>\frac{C^{''}}{2}\sqrt{\frac{2^{j^*d}}{n}}\Big)\\
	&=I+II \label{eqn:wrongchoiceproblepski_regression_finalpiece}
	\ee 
	Since $f \in \besovb(M)$ and choice of $\jstar$, we have from \ref{eqn:holder_approx} that for $C^{''}$ chosen sufficiently large (depending on $M, M'$ and $\gmax$), one has that $I=0$. Control of $II$ is more delicate, but can be handled as below. Using the fact that projection contracts norm, we have
	\be
	II &\leq \sum_{j>\jstar}\P_{P,2}\Big(\|\Pi\Big(f\Big(\frac{g}{\hat{g}}-1\Big)|V_{\jstar}\Big)\|_2>\frac{C^{''}}{4}\sqrt{\frac{2^{j^*d}}{n}}\Big)
	+\sum_{j>\jstar}\P_{P,2}\Big(\|\Pi\Big(f\Big(\frac{g}{\hat{g}}-1\Big)|V_{j}\Big)\|_2>\frac{C^{''}}{4}\sqrt{\frac{2^{j^*d}}{n}}\Big)\\
	& \leq 2 \sum_{j>\jstar}\P_{P,2}\Big(\|\ghat-g\|_2>\frac{B_L' C^{''}}{4B_U}\sqrt{\frac{2^{j^*d}}{n}}\Big). \label{eqref:lepski_f_II_final}
	\ee
The last term in the above display can be bounded using the following lemma.
\begin{lemma}\label{lemma_ghat_prob_bound}
Assume $\gamma_{\min}>\beta_{\max}$. Then for constants $C_1,C_2,C_3>0$ (depending on\\ $M,M',B_U,B_L,\gmax$) one has
\be
\sup_{P \in \Par(\beta,\gamma)}\P_{P,2}\Big(\|\ghat-g\|_2>\frac{B_L'C^{''}}{4B_U}\sqrt{\frac{2^{j^*d}}{n}}\Big)& \leq C_1(\lmax-\lmin)^2\left(e^{-C_2 2^{(\jstar-\lmax)d/2}}+e^{-C_3 2^{\lmin d/2}}\right).
\ee
\end{lemma}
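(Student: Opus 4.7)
The plan is to reduce the problem to a Lepski-style analysis of $\gtilde$ (the untruncated adaptive density estimator), leveraging the U-statistic concentration inequality Lemma~\ref{lemma_ustat_tail_use}. Since $\hat{g}=\psi(\gtilde)$ with $\psi$ chosen so that $|\hat{g}(\bx)-g(\bx)|\le |\gtilde(\bx)-g(\bx)|$ pointwise, it suffices to prove the tail bound for $\|\gtilde-g\|_2$ in place of $\|\hat{g}-g\|_2$.

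First introduce the oracle Lepski level $\lstar=\lstar(\gamma)\in[\lmin,\lmax]$ defined by $2^{\lstar d}\asymp n^{d/(2\gamma+d)}$, and split
\[
\P_{P,2}\!\left(\|\gtilde-g\|_2 > t_n\right)\ \le\ \P_{P,2}\!\left(\|\gtilde-g\|_2 > t_n,\ \lhat\le \lstar\right)+\P_{P,2}(\lhat>\lstar),
\]
where $t_n:=\tfrac{B_L' C''}{4 B_U}\sqrt{2^{j^* d}/n}$. On $\{\lhat\le\lstar\}$ the Lepski rule together with the triangle inequality gives
\[
\|\gtilde-g\|_2 \le C^{*}\sqrt{\tfrac{2^{\lstar d}}{n}}+\|\hat{g}_{\lstar}-\E_{P,2}\hat{g}_{\lstar}\|_2+\|\E_{P,2}\hat{g}_{\lstar}-g\|_2,
\]
and standard wavelet approximation controls the bias by $C(M',S)\,2^{-\lstar\gamma}\asymp n^{-\gamma/(2\gamma+d)}$. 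Since the Lepski margin is of the same order, and $\gamma\ge\gmin>\bmax\ge\beta$ ensures both are $\ll t_n\asymp n^{-\beta/(2\beta+d)}$, for $C''$ sufficiently large the deterministic pieces may be absorbed into $t_n/2$.

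The crux is therefore to establish
\[
\P_{P,2}\!\left(\|\hat{g}_{\lstar}-\E_{P,2}\hat{g}_{\lstar}\|_2 > c\, t_n\right)\ \lesssim\ \exp\!\left(-C_2\, 2^{(j^*-\lmax)d/2}\right),
\]
where the typical value of the left-hand side is only $\sqrt{2^{\lstar d}/n}\ll t_n$. Squaring the norm and expanding it as a diagonal term (bounded deterministically by $C(S,B_U)\, 2^{\lstar d}/n\le C\, 2^{\lmax d}/n$ using the frame property of compactly supported wavelets) plus a centered second-order U-statistic with kernel essentially $K_{V_{\lstar}}$, I apply Lemma~\ref{lemma_ustat_tail_use} with $L\equiv 1$ at deviation $t\asymp 2^{j^* d}/n$. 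Among the four exponentials appearing there, the slowest at this scale has the form $\exp(-C\,2^{(j^*-\lstar)d/2})$; taking the worst case $\lstar=\lmax$ produces the first term of the claimed bound.

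For the second term $\P_{P,2}(\lhat>\lstar)$, the event $\{\lhat>\lstar\}$ forces the existence of a pair $(l,l'')\in \mathcal{T}_2^2$ with $l\le\lstar<l''$ such that $\|\hat{g}_l-\hat{g}_{l''}\|_2>C^{*}\sqrt{2^{l''d}/n}$; a double union bound over such pairs, combined with the same U-statistic tail estimates already deployed in the proof of Lemma~\ref{lemma_adaptive_gtilde}, contributes at most $(\lmax-\lmin)^2\exp(-C_3\,2^{\lstar d/2})\le (\lmax-\lmin)^2\exp(-C_3\,2^{\lmin d/2})$. The principal obstacle is matching the precise exponent $2^{(j^*-\lmax)d/2}$ in the U-statistic tail; the assumption $\gmin>\bmax$ is essential here because it is exactly what guarantees the strict separation $j^*>\lmax\ge\lstar$ that makes this exponent diverge as $n\to\infty$, so that the target threshold $t_n$ is indeed in the extreme tail of the centered fluctuations of $\hat{g}_{\lstar}$.
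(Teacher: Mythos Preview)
Your proposal is correct and follows the same high-level Lepski decomposition as the paper: reduce from $\hat g$ to $\tilde g$ via the smooth truncation, split according to $\{\lhat\le\lstar\}$ versus $\{\lhat>\lstar\}$, and exploit $\gmin>\bmax$ so that $\sqrt{2^{\lstar d}/n}\ll t_n$.

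The one genuine difference is the concentration tool. The paper controls $\|\hat g_{l}-\E\hat g_{l}\|_2$ directly via the Talagrand-type inequality of Lemma~\ref{lemma_linear_projection_tail_bound}, whereas you square the norm, strip off the deterministic diagonal (of order $2^{\lstar d}/n$), and apply the U-statistic bound of Lemma~\ref{lemma_ustat_tail_use} to the centered off-diagonal. Both routes land on exponentials that dominate the target $\exp(-C\,2^{(j^*-\lmax)d/2})$. Your claim that the slowest of the four exponentials is \emph{exactly} of the form $\exp(-C\,2^{(j^*-\lstar)d/2})$ is not accurate---a direct check of $t^2/a_1^2$, $t/a_2$, $\sqrt{t/a_3}$ at $t\asymp 2^{j^*d}/n$, $j=\lstar$ shows they are all strictly larger---but since the lemma only asks for the weaker exponent, this does not affect the conclusion. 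The paper's approach is a bit more direct (no squaring step), while yours has the merit of reusing the same U-statistic machinery that drives the rest of the paper. For the $\{\lhat>\lstar\}$ piece, your ``double union bound over pairs'' is slightly looser than needed (it suffices to observe that $\lstar$ itself must fail the Lepski comparison against some $l>\lstar$, then union-bound over that single index), but it still reproduces the $(\lmax-\lmin)^2$ prefactor in the statement.
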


 Plugging in the result of Lemma \ref{lemma_ghat_prob_bound} into \eqref{eqref:lepski_f_II_final}, and thereafter using the facts that $\gmin>\bmax$, $\lmax,\jmax$ are both poly logarithmic in nature,  along with equations \eqref{eqn:wrongchoiceproblepski_regression_finalpiece}, \eqref{eqn:wrongchoiceproblepski_regression}, \eqref{eqn:fourthmomentlepski}, and \eqref{eqn:wrongchoicelepski_regression}, followed by some straightforward but tedious algebra,  we have the existence of an estimator $\ftilde$ depending on $M,M',B_U,B_L,\bmin,\bmax,\gmax$, such that for every $(\beta,\gamma) \in [\betamin,\betamax]\times [\gmin,\gmax]$,
$$\sup\limits_{P \in \mathcal{P}(\beta,\gamma)} \E_P\|\ftilde-f\|_2^2 \leq Cn^{-\frac{2\b}{2\b+d}},$$ 
with a large enough positive constant $C$ depending on  $M,M',B_U,B_L,\bmin, \gmax$.

The proof of $\liminf_{n \to \infty} \inf_{P \in \Par(\beta, \gamma)}\P_P[ \ftilde \in B_{2,\infty}^{\beta}(C)] = 1$, can be done along the lines of the proof of $\liminf_{n \to \infty} \inf_{P \in \Par(\beta, \gamma)}\P_P[ \gtilde \in B_{2,\infty}^{\gamma}(C)] = 1$, since using \eqref{eqn:gtilde_smoothness_prob} and the fact that $\gmin >\beta_{\max}$ one can show using arguments similar to proof of Lemma \ref{lemma:density_truncation} that for sufficiently large $C$, $f \frac{g}{\ghat}\in \besov^{\beta}(C)$,  with suitably high probability uniformly over $P \in \Par(\beta,\gamma)$. 

 The construction of a $\fhat$ from this $\ftilde$ and demonstrating its desired properties is very similar to the derivation of $\ghat$ from $\gtilde$, and hence is omitted.

\noindent Next, we derive the lower bound on the estimation error. The proof will be deferred to the Appendix \ref{sec:regression_lowerbound}. 
\begin{lemma}
\label{lemma:regression_lowerbound}
	There exists a constant $c>0$, independent of $n$, such that 
	\begin{align}
		\inf_{\hat{f}_n} \sup_{P \in \Par(\beta, \gamma)} \E_{P} [ \| \hat{f}_n - f \|_2^2 ] \geq c n^{- \frac{2 \beta}{2\beta + d}}. 
	\end{align}
\end{lemma}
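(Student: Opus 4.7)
\textbf{Proof Proposal for Lemma \ref{lemma:regression_lowerbound}.}
The plan is to apply Assouad's lemma to a hypercube of carefully designed binary regression models. Specifically, I would reuse the construction from the testing lower bound in Theorem \ref{thm:testing} part \ref{thm:testing_simple}: fix a $C^\infty$ function $H:[0,1]^d\to\mathbb{R}$ supported in $[0,1/2]^d$ with $\int H(\bx)d\bx=0$ and $\int H^2(\bx)d\bx=1$; choose $k = \lceil c_0\, n^{d/(2\beta+d)}\rceil$ for a small constant $c_0>0$; let $\bx_1,\ldots,\bx_k$ be the bottom-left corners of $k$ disjoint translates $\Omega_1,\ldots,\Omega_k$ of $k^{-1/d}[0,1/2]^d$ contained in $[0,1]^d$; and set, for each $\lambda=(\lambda_1,\ldots,\lambda_k)\in\{-1,+1\}^k$,
\[
f_\lambda(\bx) = \tfrac12 + k^{-\beta/d}\sum_{j=1}^{k} \lambda_j\, H\!\bigl((\bx-\bx_j)\,k^{1/d}\bigr).
\]
For the design density I would take any fixed $g_0 \in B_{2,\infty}^\gamma(M')$ satisfying $B_L \le g_0 \le B_U$ and $\int g_0 = 1$ (for example, the constant density $1$, possibly rescaled); this makes all $P_\lambda := (f_\lambda,g_0)$ lie in $\Par(\beta,\gamma)$ once $n$ is large enough to guarantee $|f_\lambda - 1/2|<1/2$.

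The next step is to verify the three ingredients Assouad's lemma requires. Because the bumps have disjoint supports and $H\in C^\infty$, a direct wavelet-coefficient computation shows that $f_\lambda - 1/2$ lies in $B_{2,\infty}^\beta(M)$ once the prefactor $k^{-\beta/d}$ is in place (the choice of $H$ absorbs constants; this is the only slightly delicate Besov estimate). For separation, the disjointness of supports and normalization $\|H\|_2 = 1$ give
\[
\|f_\lambda - f_{\lambda'}\|_2^2 = 4\,k^{-2\beta/d}\cdot k^{-1}\, d_H(\lambda,\lambda'),
\]
where $d_H$ denotes Hamming distance. For information-theoretic closeness I would use the fact that for Hamming-neighbors $\lambda\sim\lambda'$, $f_\lambda$ and $f_{\lambda'}$ differ only on one $\Omega_j$ and are both bounded inside $[1/4,3/4]$ for large $n$, so the single-sample Kullback–Leibler divergence under the common design $g_0$ obeys $\mathrm{KL}(P_\lambda\|P_{\lambda'}) \le C\int g_0(\bx)(f_\lambda-f_{\lambda'})^2 d\bx \le C B_U\, k^{-2\beta/d-1}$, and hence the product-measure KL satisfies
\[
\mathrm{KL}\bigl(P_\lambda^{\otimes n}\,\big\|\,P_{\lambda'}^{\otimes n}\bigr) \le C B_U\, n\, k^{-2\beta/d-1}\le \alpha_0 < \tfrac12,
\]
provided $c_0$ (hence $k$) is chosen large enough relative to $n^{d/(2\beta+d)}$.

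Combining these two inequalities in Assouad's lemma (see, e.g., Theorem 2.12 of \cite{tsybakov2008introduction}) gives
\[
\inf_{\hat f_n}\sup_{\lambda\in\{-1,+1\}^k}\E_{P_\lambda}\bigl[\|\hat f_n - f_\lambda\|_2^2\bigr] \,\ge\, \tfrac{k}{2}\cdot 4 k^{-2\beta/d-1}\cdot\tfrac12(1-\sqrt{\alpha_0})\,\gtrsim\, k^{-2\beta/d}\,\asymp\, n^{-2\beta/(2\beta+d)},
\]
which is the claimed lower bound since every $P_\lambda\in\Par(\beta,\gamma)$. The main obstacle I anticipate is the Besov-norm verification $f_\lambda\in B_{2,\infty}^\beta(M)$, which requires bookkeeping on the wavelet coefficients of the scaled bump $H((\cdot-\bx_j)k^{1/d})$ across resolution levels; the remaining KL and separation bounds are routine once the bumps are bounded away from $0,1$ and $g_0$ is bounded above by $B_U$.
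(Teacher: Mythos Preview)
Your proposal is correct and the argument goes through, but it follows a different route from the paper's proof. The paper builds its hypercube directly out of wavelet basis functions at a single resolution level $j_0=\lceil\frac{d}{2\beta+d}\log_2 n\rceil$, namely $f_i=\tfrac12+\varepsilon\,2^{-j_0(1/2+\beta/d)}\sum_{k,v}\alpha_{i,k}^v\psi_{j_0,k}^v$ with $\alpha_{i,k}^v\in\{0,1\}$, then applies the Varshamov--Gilbert lemma to extract a well-separated packing, bounds the pairwise $\chi^2$ divergence to the center $f_0\equiv\tfrac12$, and invokes Tsybakov's Theorem~2.7. By contrast, you reuse the smooth bump construction from the testing lower bound, keep the full hypercube $\{-1,+1\}^k$, control nearest-neighbor KL divergences, and apply Assouad's lemma (Tsybakov's Theorem~2.12). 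The chief advantage of the paper's wavelet construction is that Besov membership $f_i\in B_{2,\infty}^\beta(M)$ is immediate from the definition \eqref{eqn:besov_multidim}, since only the level-$j_0$ coefficients are nonzero and their $\ell_2$ norm is exactly $\varepsilon\,2^{-j_0\beta}$; this eliminates the ``main obstacle'' you flag. Your approach has the virtue of recycling the testing construction verbatim and avoiding the Varshamov--Gilbert step, but at the cost of having to verify the Besov norm of the rescaled bumps across all resolution levels, which is routine but requires the bookkeeping you mention.
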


This completes the proof of Theorem \ref{thm:estimation_regression}. 

\section*{Acknowledgments} The authors thank the Associate Editor and two anonymous referees for numerous helpful comments which substantially improved the content and presentation of the paper.

\bibliographystyle{imsart-nameyear}
\bibliography{binary_adaptive_inference_1-1-1}

\appendix
\section{Technical Lemmas} 
Since the estimators arising in this paper also have a linear term, we will need the following standard Bernstein and Rosenthal type tail and moment bounds \citep{petrov1995limit}.
\begin{lemma}\label{lemma_linear_tail_bound}
	If $\bW_1,\ldots,\bW_n \sim \P$ are iid random vectors such that $|L(\bW)|\leq B$ almost surely $\P$, then for $q\geq 2$ one has for large enough constants $C(B)$ and $C(B,q)$
	\be 
	\P(|\frac{1}{n}\sum_{i=1}^n\left(L(\bW_i)-\E(L(\bW_i))\right)|\geq t)\leq 2e^{-nt^2/C(B)},
	\ee
	and 
	\be 
	\ & \E(|\sum_{i=1}^n\left(L(\bW_i)-\E(L(\bW_i))\right)|^q)\\&\leq \left[\sum_{i=1}^n \E\left(|L(\bW_i)-\E(L(\bW_i))|^q\right)+\left[\sum_{i=1}^n \E\left(|L(\bW_i)-\E(L(\bW_i))|^2\right)\right]^{q/2}\right]\leq C(B,q)n^{\frac{q}{2}}.
	\ee
\end{lemma}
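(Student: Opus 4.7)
The plan is to establish both bounds as immediate consequences of classical concentration inequalities, so the work is almost entirely bookkeeping after the right tool is cited.

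First I would handle the tail bound. Setting $Z_i = L(\bW_i) - \E[L(\bW_i)]$, the $Z_i$ are iid, mean zero, and satisfy $|Z_i| \leq 2B$ almost surely. Hoeffding's inequality for bounded independent random variables then yields
\[
\P\Big(\Big|\tfrac{1}{n}\sum_{i=1}^n Z_i\Big| \geq t\Big) \leq 2 \exp\!\Big(-\tfrac{nt^2}{2(2B)^2}\Big),
\]
which gives the stated bound with $C(B) = 8B^2$. No further work is required here.

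For the moment inequality I would appeal to Rosenthal's inequality \citep{petrov1995limit}, which states that for iid mean-zero random variables $Z_1,\dots,Z_n$ and $q \geq 2$ there is a constant $K_q$ (depending only on $q$) such that
\[
\E\Big[\Big|\sum_{i=1}^n Z_i\Big|^q\Big] \leq K_q \Big\{ \sum_{i=1}^n \E|Z_i|^q + \Big(\sum_{i=1}^n \E|Z_i|^2\Big)^{q/2} \Big\}.
\]
Since $|Z_i| \leq 2B$, we have $\E|Z_i|^q \leq (2B)^q$ and $\E|Z_i|^2 \leq (2B)^2$. Substituting,
\[
\E\Big[\Big|\sum_{i=1}^n Z_i\Big|^q\Big] \leq K_q\big\{ n (2B)^q + n^{q/2}(2B)^q \big\} \leq 2 K_q (2B)^q n^{q/2},
\]
where the last step uses $n \leq n^{q/2}$ for $q \geq 2$. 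Taking $C(B,q) = 2 K_q (2B)^q$ completes the claim.

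There is no genuine obstacle here: both statements are textbook consequences of Hoeffding and Rosenthal, and the only minor point is noting that the bracketed expression $\sum \E|Z_i|^q + (\sum \E|Z_i|^2)^{q/2}$ in the intermediate display of the lemma statement is already the Rosenthal bound (up to the absolute constant $K_q$), and that boundedness of $L$ absorbs both terms into a single $C(B,q) n^{q/2}$ bound. If one prefers a self-contained argument avoiding a direct citation, Rosenthal's inequality can itself be proved via a symmetrization plus Khintchine argument, but since the paper explicitly references \citep{petrov1995limit}, I would not redo that here.
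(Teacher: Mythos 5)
Your proof is correct and matches what the paper intends: the paper states this lemma without proof, simply as a citation to standard ``Bernstein and Rosenthal type'' inequalities in \cite{petrov1995limit}, and your use of Hoeffding (a special case of Bernstein suitable for bounded variables, and giving exactly the stated $e^{-nt^2/C(B)}$ form) plus Rosenthal reproduces that. You also correctly note the small typo in the paper's intermediate display, which omits the universal Rosenthal constant $K_q$ in front of the bracketed sum.
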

We will also need the following concentration inequality for linear estimators based on wavelet projection kernels, proof of which can be done along the lines of proof of Equation (27) of \cite{gine2011rates} or Theorem 5.1.13 of \cite{gine2015mathematical}.
\begin{lemma}\label{lemma_linear_projection_tail_bound}
	Consider i.i.d. observations $\bW_i=(Y,\bX)_i$, $i=1,\ldots,n$ where $\bX_i\in [0,1]^d$ with marginal density $g$. Let $\hat{m}(\bx)=\frac{1}{n}\sum_{i=1}^n L(\bW_i)K_{V_l}\left(\bX_i,\bx\right)$, such that $\max\{\|g\|_{\infty},\|L\|_{\infty}\}\leq B_U$. If $\frac{2^{ld}}{n}\leq 1$, there exists $C,C_1,C_2>0$, depending on $B_U$ and scaling functions $\psi_{0,0}^0,\psi_{0,0}^1$ respectively, such that 
	\be 
	\E(\|\hat{m}-\E(\hat{m})\|_2)\leq C\sqrt{\frac{2^{ld}}{n}},
	\ee
	and for any $x>0$
	\be 
	\P\left(n\|\hat{m}-\E(\hat{m})\|_{2}>\frac{3}{2}n\E(\|\hat{m}-\E(\hat{m})\|_2)+\sqrt{C_1 n 2^{ld/2}x}+C_2 2^{ld/2}x\right)\leq e^{-x}.
	\ee
\end{lemma}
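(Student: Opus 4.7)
}

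The strategy is to realize $\|\hat m-\E\hat m\|_2$ as the supremum of an empirical process indexed by the (finite dimensional) unit ball of $V_l$, and then apply a direct variance computation for the expectation bound and Bousquet's version of Talagrand's inequality for the tail bound.

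First I would handle the expectation bound. Since $K_{V_l}(\cdot,\bx_2)$ lies in $V_l$ for each $\bx_2$, the estimator $\hat m$ also lies in $V_l$, so by Parseval
\[
\|\hat m-\E\hat m\|_2^{\,2}=\sum_{k\in\mathcal{Z}_l,\,v\in\{0,1\}^d}\Bigl(\tfrac{1}{n}\sum_{i=1}^{n}\bigl[L(\bW_i)\bpsi_{l,k}^{v}(\bX_i)-\E L(\bW)\bpsi_{l,k}^{v}(\bX)\bigr]\Bigr)^{2}.
\]
Taking expectations, using independence, and bounding each variance by $\E[L(\bW)^2\bpsi_{l,k}^{v}(\bX)^2]\le B_U^2\|g\|_\infty\|\bpsi_{l,k}^{v}\|_2^{2}=B_U^{3}$, the cardinality $|\mathcal{Z}_l|\cdot 2^{d}=O(2^{ld})$ of the index set gives $\E\|\hat m-\E\hat m\|_2^{2}\le C(B_U)\,2^{ld}/n$, and Jensen's inequality yields the desired first estimate.

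For the tail bound, I would write
\[
\|\hat m-\E\hat m\|_2=\sup_{h\in V_l,\,\|h\|_2\le1}\Bigl\langle \hat m-\E\hat m,\,h\Bigr\rangle=\sup_{h\in V_l,\,\|h\|_2\le1}(P_n-P)f_h,
\]
where $f_h(\bw)=L(\bw)\,h(\bx)$ (the identity $\langle\hat m,h\rangle=n^{-1}\sum_i L(\bW_i)h(\bX_i)$ holds because $h\in V_l$ and $K_{V_l}$ is the projection kernel onto $V_l$). The class $\mathcal{F}=\{f_h:h\in V_l,\,\|h\|_2\le1\}$ has a uniform envelope and uniform variance: using the standard Bernstein-type bound $\|h\|_\infty\le C\,2^{ld/2}\|h\|_2$ for $h\in V_l$ (which follows from the compact support of the CDV wavelets), one gets $\|f_h\|_\infty\le B_U\cdot C\,2^{ld/2}=:U$ and $\E f_h^{2}\le B_U^{2}\,\|g\|_\infty\|h\|_2^{2}\le B_U^{3}=:\sigma^{2}$. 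Applying Bousquet's form of Talagrand's inequality to $Z=n\sup_{f\in\mathcal{F}}(P_n-P)f$ yields, for every $x>0$,
\[
\P\!\left(Z\ge\E Z+\sqrt{2x\bigl(n\sigma^{2}+2U\,\E Z\bigr)}+\tfrac{U x}{3}\right)\le e^{-x}.
\]
The conservative inequality $\E Z\le\tfrac12 \E Z+\tfrac12 \E Z$ inside the square root, combined with $n\sigma^{2}\lesssim n$, $U\E Z\lesssim 2^{ld/2}\sqrt{n\cdot 2^{ld}}=2^{ld}\sqrt{n}$, and the standing assumption $2^{ld}\le n$ (which forces $2^{ld}\sqrt n\le n\,2^{ld/2}$ and $n\le n\,2^{ld/2}$), absorbs both contributions into a single term of order $\sqrt{n\,2^{ld/2}\,x}$, and the standard device $\sqrt{2x\cdot 2U\E Z}\le \tfrac12\E Z+2Ux$ can be used to convert the $\E Z$ factor under the square root into the extra $\tfrac12\E Z$ that produces the factor $3/2$ in the stated bound.

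The main obstacle is not the Talagrand inequality itself but (i) verifying the envelope bound $\|h\|_\infty\le C\,2^{ld/2}$ on the full wavelet unit ball of $V_l$ with a constant depending only on the scaling functions $\psi_{0,0}^{0},\psi_{0,0}^{1}$ (this is where the compact-support hypothesis is essential), and (ii) bookkeeping the constants so that the random fluctuation term collapses into the clean form $\sqrt{C_1 n\,2^{ld/2}\,x}+C_2\,2^{ld/2}\,x$ rather than a two-scale mixed bound. These are both standard manipulations and essentially reproduce the argument of Giné--Nickl for linear wavelet density estimators, with $L(\bW_i)$ playing the role of an additional uniformly bounded multiplicative weight.
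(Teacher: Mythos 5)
Your proposal is correct and reproduces essentially the argument the paper gestures at: the paper does not give an explicit proof but cites Equation (27) of Gin\'e and Nickl (2011) and Theorem 5.1.13 of Gin\'e and Nickl (2015), whose proofs, as in your proposal, realize the $L_2$-norm as a supremum of the empirical process over the unit ball of $V_l$, use the compact-support Bernstein bound $\|h\|_\infty\lesssim 2^{ld/2}\|h\|_2$ for the envelope and the uniform variance bound, and then invoke Bousquet's version of Talagrand's inequality. The only cosmetic point worth tightening in your write-up is the final absorption step: either bound $U\,\E Z\lesssim n\,2^{ld/2}$ directly using $2^{ld}\le n$ and note that $\E Z\le\tfrac{3}{2}\E Z$ trivially, or apply the single AM--GM step $2\sqrt{xU\,\E Z}\le\tfrac12\E Z+2Ux$; presenting both as if they were needed simultaneously obscures an otherwise clean argument.
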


\section{Proofs of U-Statistics Deviation Results} 
%\subsection{U-statistics} 
{The following tail bound for second order degenerate U-statistics \citep{gine2015mathematical} is due to \cite{gine2000exponential} with constants by \cite{houdre2003exponential} and is crucial for our calculations.}
\begin{lemma}\label{lemma_ustat_tail}
	Let $U_n$ be a degenerate U-statistic of order 2 with kernel $R$ based on an i.i.d. sample $W_1,\ldots,W_n$. Then there exists a constant $C$ independent of $n$, such that 
	\begin{align*}
	P[ | \sum_{i \neq j } R(W_1, W_2) | \geq C( \Lambda_1 \sqrt{u} + \Lambda_2  u + \Lambda_3 u^{3/2} + \Lambda_4 u^2 )] \leq 6 \exp(- u), 
	\end{align*}
	where, we have,
	\begin{align*}
	\Lambda_1^2 &= \frac{n(n-1)}{2} E[ R^2(W_1 , W_2)], \nonumber\\
	\Lambda_2 &= n \sup \{E[R(W_1, W_2) \zeta(W_1) \xi(W_2)] : E[\zeta^2(W_1)]\leq 1, E[\xi^2(W_1)] \leq 1 \}, \nonumber\\
	\Lambda_3 &= \| n E[R^2 (W_1 , \cdot) \| _{\infty} ^{\frac{1}{2}}, \nonumber \\ 
	\Lambda_4 &= \| R \| _{\infty}. \nonumber
	\end{align*}
\end{lemma}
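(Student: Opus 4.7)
The plan is to follow the classical decoupling-symmetrization-chaos pipeline for second-order canonical U-statistics. Since this is the Giné-Latala-Zinn moment bound with numerical constants supplied by Houdré-Reynaud-Bouret, my proposal is not to invent a new argument but to assemble three standard ingredients in the correct order: decoupling, a sharp moment bound for Rademacher chaos of order 2, and a Markov optimization. Degeneracy of $R$ (namely $\E[R(W_1,W_2)\mid W_1]=0$ a.s., which I would first ensure by a Hoeffding projection if the kernel is not already canonical) is used throughout.

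First I would invoke the de la Peña-Montgomery-Smith decoupling inequality, which bounds the tail of $\sum_{i\neq j}R(W_i,W_j)$ by an absolute constant times the tail of the decoupled version $\sum_{i,j}R(W_i,W_j')$ for an independent copy $\{W_j'\}$. Introducing independent Rademacher variables $\varepsilon_i,\varepsilon_j'$ on each index further reduces the problem, up to another absolute constant, to the symmetrized double sum $\sum_{i,j}\varepsilon_i\varepsilon_j'R(W_i,W_j')$. Conditionally on the data, this is a second-order Rademacher chaos with coefficient matrix $M=(R(W_i,W_j'))_{ij}$.

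Next I would apply Talagrand's moment inequality for second-order Rademacher chaos: for every $p\geq 2$,
\[
\|X\|_p \;\lesssim\; \|M\|_{HS}\,\sqrt{p} \;+\; \|M\|_{op}\,p \;+\; \max_i\Bigl(\sum_j m_{ij}^2\Bigr)^{1/2} p^{3/2} \;+\; \|M\|_\infty\,p^2 .
\]
Taking expectations over the data identifies the four random norms with the four parameters in the statement: $\|M\|_{HS}$ produces $\Lambda_1$ (its square is exactly $\tfrac{n(n-1)}{2}\E[R^2(W_1,W_2)]$); the operator norm $\|M\|_{op}$ converts, by its $L^2$-duality representation, into $\Lambda_2 = n\sup\{\E[R(W_1,W_2)\zeta(W_1)\xi(W_2)]:\|\zeta\|_2,\|\xi\|_2\leq 1\}$; the row-wise $\ell^2$ norm produces $\Lambda_3 = \|n\,\E[R^2(W_1,\cdot)]\|_\infty^{1/2}$; and the entrywise sup norm produces $\Lambda_4=\|R\|_\infty$. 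A Markov step with $p=u$ applied to $\|X\|_p^p$ then converts this four-term moment bound into the claimed mixed exponential tail, with the constant $6$ absorbing the decoupling, symmetrization, and Markov factors.

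The main obstacle is the order-2 Rademacher chaos moment bound in its sharp four-term form: extracting coefficients matching the scales $\sqrt{p},p,p^{3/2},p^2$ requires either Talagrand's generic chaining machinery or the entropy/modified-logarithmic-Sobolev method of Boucheron-Bousquet-Lugosi-Massart, as refined in Houdré-Reynaud-Bouret. The remaining pieces, decoupling and Markov, are routine and only influence absolute constants; all of the delicate work is in isolating the operator-norm term $\Lambda_2$ and the mixed term $\Lambda_3$ with the right power of $p$.
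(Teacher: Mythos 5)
The paper does not prove Lemma \ref{lemma_ustat_tail}: it quotes it as a known result, citing \cite{gine2000exponential} for the moment inequality and \cite{houdre2003exponential} for the explicit constants, and immediately moves on to apply it. Your sketch does recover the correct high-level outline of the Gin\'e--Latala--Zinn argument: decouple via de la Pe\~na--Montgomery-Smith, symmetrize with independent Rademachers, apply a sharp moment bound for order-two Rademacher chaos conditionally on the data, then optimize with $p \asymp u$ and Chebyshev. So the scaffolding is right, and it is genuinely the same route as the cited sources.

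However, the step you describe as ``taking expectations over the data identifies the four random norms with the four parameters'' is exactly where all the real work in the reference lies, and your sketch glosses over it. Conditionally on the sample, the chaos bound is stated in terms of the \emph{random} norms of the coefficient matrix $M=(R(W_i,W_j'))_{i,j}$: the Hilbert--Schmidt norm, the operator norm, the maximal row $\ell^2$ norm, and the entrywise maximum. Integrating naively over the data would give $E\|M\|_{op}$, not $\Lambda_2$. But $\Lambda_2 = n\sup\{E[R(W_1,W_2)\zeta(W_1)\xi(W_2)] : \|\zeta\|_2,\|\xi\|_2\le 1\}$ is a \emph{deterministic} weak-variance parameter and is generically much smaller than $E\|M\|_{op}$. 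The core of the Gin\'e--Latala--Zinn proof is precisely the comparison argument (a recursive/bootstrap moment estimate) that trades $E\|M\|_{op}^p$ for $\Lambda_2^p$ plus contributions that get absorbed into the $\Lambda_1\sqrt{p}$, $\Lambda_3 p^{3/2}$, and $\Lambda_4 p^2$ scales; an analogous issue arises for the random row $\ell^2$ norm versus $\Lambda_3$. Without this de-randomization step, your argument proves a tail bound with $E\|M\|_{op}$ in place of $\Lambda_2$, which is weaker and typically useless for the kernel-structured $M$ arising here. Since the paper only cites the result, that is the pragmatic move; if you want a self-contained proof, you must make the passage from random matrix norms to the deterministic $\Lambda$'s explicit rather than treating it as routine.
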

We use this lemma to establish Lemma \ref{lemma_ustat_tail_use}.
\begin{proof}
	Let us analyze $R(\bW_1,\bW_2)=L\left(\bW_1\right) K_{V_j}\left(\bX_1,\bX_2\right)L\left(\bW_2\right)$ first. {{The proof for $R(\bW_1, \bW_2)= L\left(\bW_1\right) K_{W_j}\left(\bX_1,\bX_2\right)L\left(\bW_2\right)$ is analogous}}. By Hoeffding's decomposition one has
	\be
	\ &\frac{1}{n(n-1)}\sum_{i_1 \neq i_2}R\left(\bW_{i_1},\bW_{i_2}\right)-\E\left(R\left(\bW_1,\bW_2\right)\right)
	\\&=\frac{2}{n}\sum_{i_1=1}^n \left[\E_{\bW_{i_1}}R\left(\bW_{i_1},\bW_{i_2}\right)-\E R\left(\bW_{i_1},\bW_{i_2}\right)\right]\\
	&+\frac{1}{n(n-1)}\sum_{i_1 \neq i_2}\left[R\left(\bW_{i_1},\bW_{i_2}\right)-\E_{\bW_{i_1}}R\left(\bW_{i_1},\bW_{i_2}\right)-\E_{\bW_{i_2}}R\left(\bW_{i_1},\bW_{i_2}\right)+\E R\left(
	\bW_{i_1},\bW_{i_2}\right)\right]\\
	&:=T_1+T_2
	\ee
	\subsubsection{Analysis of $T_1$}
	Noting that $T_1=\frac{2}{n}\sum_{i_1=1}^n H(\bW_{i_1})$ where $H(\bW_{i_1})=\E\left(R\left(\bW_{i_1},\bW_{i_2}|\bW_{i_1}\right)\right)-\E R\left(\bW_{i_1},\bW_{i_2}\right)$ we control $T_1$ by standard Hoeffding's Inequality. First note that, 
	\be
	|H(\bW_{i_1})|&=|\sum_{k \in \Z_j}\sum_{v \in \{0,1\}^d}\left[L\left(\bW_{i_1}\right)\psi_{jk}^v\left(\bX_{i_1}\right)\E\left(\psi_{jk}^v\left(\bX_{i_2}\right)L\left(\bW_{i_2}\right)\right)-\left(\E\left(\psi_{jk}^v\left(\bX_{i_2}\right)L\left(\bW_{i_2}\right)\right)\right)^2\right]|\\
	& \leq \sum_{k \in \Z_j}\sum_{v \in \{0,1\}^d}|L\left(\bW_{i_1}\right)\psi_{jk}^v\left(\bX_{i_1}\right)\E\left(\psi_{jk}^v\left(\bX_{i_2}\right)L\left(\bW_{i_2}\right)\right)|+\sum_{k \in \Z_j}\sum_{v \in \{0,1\}^d}\left(\E\left(\psi_{jk}^v\left(\bX_{i_2}\right)L\left(\bW_{i_2}\right)\right)\right)^2
	\ee
	First, by standard compactness argument for the wavelet bases,
	\be
	|\E\left(\psijkv\left(\bX\right)L(\bW)\right)|&\leq \int|\E\left(L(\bW)|\bX=\bx\right)\Big(2^{\frac{jd}{2}}\prod\limits_{l=1}^d\psi_{00}^{v_l}(2^j x_l-k_l )\Big)||g(\bx)|d\bx \\
	&\leq \constant 2^{-\frac{jd}{2}}. \label{eqn:innerproductbound}
	\ee

	Therefore,
	\be
	\sum_{k \in \Z_j}\sum_{v \in \{0,1\}^d}\left(\E\left(\psi_{jk}^v\left(\bX_{i_2}\right)L\left(\bW_{i_2}\right)\right)\right)^2 & \leq \constant \label{eqn:expected_kernel}
	\ee
	Also, using the fact that for each fixed $\bx \in [0,1]^d$, the number indices $k \in \Z_j$ such that $\bx$ belongs to support of at least one of $\psi_{jk}^v$ is bounded by a constant depending only on $\psi^0_{00}$ and $\psi_{00}^1$. Therefore combining \eqref{eqn:innerproductbound} and \eqref{eqn:expected_kernel},
	\be
	\sum_{k \in \Z_j}\sum_{v \in \{0,1\}^d}|L\left(\bW_{i_1}\right)\psi_{jk}^v\left(\bX_{i_1}\right)\E\left(\psi_{jk}^v\left(\bX_{i_2}\right)L\left(\bW_{i_2}\right)\right)|&  \leq \constant 2^{-\frac{jd}{2}}2^{\frac{jd}{2}}=\constant. \\ \label{eqn:linear_term_uniform_bound}
	\ee
	Therefore, by \eqref{eqn:linear_term_uniform_bound} and Hoeffding's Inequality,
	\be
	\P\left(|T_1|\geq t\right)\leq 2e^{-\constant nt^2}. \label{ustat_linear_term_exponential_bound}
	\ee
	\subsubsection{Analysis of $T_2$}
	Since $T_2$ is a degenerate U-statistics, it's analysis is based on Lemma \ref{lemma_ustat_tail}. In particular,
	\be
	T_2&= \frac{1}{n(n-1)}\sum_{i_1 \neq i_2}R^{*}\left(\bW_{i_1},\bW_{i_2}\right)
	\ee
	where 
	\be 
	R^{*}\left(\bW_{i_1},\bW_{i_2}\right)&=\sum_{k \in \Z_j}\sum_{v \in \{0,1\}^d}\left\{\begin{array}{c}\left(L(\bW_{i_1})\psi_{jk}^v\left(\bX_{i_1}\right)-\E\left(\psi_{jk}^v\left(\bX_{i_1}\right)\E\left(L(\bW_{i_1})|\bX_{i_1}\right)\right)\right)\\ \times \left(L(\bW_{i_2})\psi_{jk}^v\left(\bX_{i_2}\right)-\E\left(\psi_{jk}^v\left(\bX_{i_2}\right)\E\left(L(\bW_{i_2})|\bX_{i_2}\right)\right)\right)
	\end{array}\right\}
	\ee
	Letting $\Lambda_i, \ i=1,\ldots,4$ being the relevant quantities as in Lemma \ref{lemma_ustat_tail}, we have the following lemma.
	\begin{lemma}\label{lemma_lamda_estimates}
		There exists a constant $C=\constant$ such that
		$$\Lambda_1^2 \leq C \frac{n(n-1)}{2}2^{jd},\ \Lambda_2 \leq C n,\ \Lambda_3^2 \leq C n 2^{jd},\ \Lambda_4 \leq C 2^{\frac{jd}{2}}.$$
	\end{lemma}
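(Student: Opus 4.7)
The plan is to verify each of the four bounds separately by exploiting the Hoeffding-degenerate form
\[
R^{*}(\bW_{1},\bW_{2}) = \sum_{k\in\mathcal{Z}_j,\,v} \tilde{L}_{k,v}(\bW_{1})\,\tilde{L}_{k,v}(\bW_{2}), \qquad \tilde{L}_{k,v}(\bW) := L(\bW)\psi_{jk}^v(\bX) - \E\!\left[L(\bW)\psi_{jk}^v(\bX)\right],
\]
together with four standard properties of compactly supported wavelet bases that are already used elsewhere in the paper: the on-diagonal bound $K_{V_j}(\bx,\bx)\leq C 2^{jd}$, the reproducing identity $\int K_{V_j}^2(\bx_1,\bx_2)\,d\bx_1 = K_{V_j}(\bx_2,\bx_2)$, the Bessel inequality $\sum_{k,v}\langle\psi_{jk}^v,h\rangle^2 \leq \|h\|_2^2$ at a fixed resolution, and the $L^\infty$-boundedness of $\Pi_{V_j}$ on bounded functions.

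For $\Lambda_1^2$, I would exploit the $L^2$-orthogonality of the Hoeffding decomposition to get $\E R^{*2}\leq \Var R\leq \E R^2 \leq B^4\,\E[K_{V_j}^2(\bX_1,\bX_2)]$. Using $g\leq B_U$ and the reproducing identity, the last expectation reduces to $B_U^2\int K_{V_j}(\bx,\bx)\,d\bx$, which is bounded by $C\,2^{jd}$ since $\dim V_j\asymp 2^{jd}$. For $\Lambda_3^2$, I would repeat the same argument conditionally on $\bW_1$: $\E[R^{*2}(\bW_1,\bW_2)\mid \bW_1]\leq \Var(R(\bW_1,\bW_2)\mid\bW_1) \leq B^4 B_U\,K_{V_j}(\bX_1,\bX_1)\leq C\,2^{jd}$ almost surely, giving $\Lambda_3^2\leq C n\,2^{jd}$. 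For $\Lambda_4$, I would apply the triangle inequality $\|R^{*}\|_\infty \leq \|R\|_\infty + 2\|\E_1 R\|_\infty + |\E R|$; the last three terms are $O(1)$ by the $L^\infty$-boundedness of $\Pi_{V_j}$ applied to $m(\bx)g(\bx)$ with $m(\bx)=\E[L(\bW)\mid \bX=\bx]$, while the main piece $\|R\|_\infty\leq B^2\|K_{V_j}\|_\infty$ is controlled by the scaling $K_{V_j}(\bx_1,\bx_2)=2^{jd}K_{V_0}(2^j\bx_1,2^j\bx_2)$ and the compact support of the base wavelets.

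The most delicate of the four is $\Lambda_2$, which must be of order $n$ \emph{uniformly in} $j$. Here the diagonal representation of $R^*$ gives
\[
\E\!\left[R^{*}(\bW_1,\bW_2)\,\zeta(\bW_1)\,\xi(\bW_2)\right] = \sum_{k,v}\E[\tilde{L}_{k,v}(\bW)\,\zeta(\bW)]\,\E[\tilde{L}_{k,v}(\bW)\,\xi(\bW)].
\]
I would apply Cauchy--Schwarz in the index $(k,v)$ and then Bessel's inequality to the auxiliary function $h_\zeta(\bx):=\E[L(\bW)\zeta(\bW)\mid\bX=\bx]\,g(\bx)$, giving $\sum_{k,v}\E[\tilde{L}_{k,v}(\bW)\zeta(\bW)]^2\leq \|h_\zeta\|_2^2 \leq B^2 B_U\,\E\zeta^2\leq B^2 B_U$, and symmetrically for $\xi$; this yields $\Lambda_2\leq C n$.

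I expect this $\Lambda_2$ estimate to be the main technical obstacle: without using the degeneracy of $R^*$ (which kills a potentially $O(2^{jd/2})$ cross-term coming from the un-centered kernel) and Bessel's inequality (which collapses the $(k,v)$ sum into an $L^2$-norm of a bounded auxiliary function), any naive approach picks up an extra factor growing in $j$ and destroys the $j$-free rate needed to recover the $a_2$ term of Lemma \ref{lemma_ustat_tail_use}.
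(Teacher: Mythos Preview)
Your overall plan is sound, and your treatments of $\Lambda_1$ and $\Lambda_4$ are essentially the paper's (note incidentally that the stated bound $\Lambda_4\leq C\,2^{jd/2}$ is a typo in the lemma; both your scaling argument and the paper's compact-support count actually yield $\Lambda_4\leq C\,2^{jd}$, which is what feeds into $a_3$ downstream). Your $\Lambda_2$ argument via the separable representation $R^{*}=\sum_{k,v}\tilde L_{k,v}\otimes\tilde L_{k,v}$, Cauchy--Schwarz in the index $(k,v)$, and Bessel's inequality is a genuinely different route from the paper, which instead expands $R^{*}=R-\E_{1}R-\E_{2}R+\E R$ and controls the four bilinear forms $\E[R\,\zeta\xi]$, $\E[(\E_{1}R)\,\zeta\xi]$, etc., separately through the projection identity $\int h_{\zeta}\,\Pi_{V_j}(h_{\xi})\leq\|h_{\zeta}\|_{2}\|h_{\xi}\|_{2}$. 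Both approaches ultimately rest on the $L^{2}$-contractivity of $\Pi_{V_j}$; yours is slightly more transparent and extends immediately to any kernel with a rank-one-sum structure.

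There is, however, a real (if easily repaired) gap in your $\Lambda_3$ step. You write ``repeat the same argument conditionally on $\bW_1$: $\E[R^{*2}\mid\bW_1]\leq\Var(R\mid\bW_1)$,'' but the Hoeffding orthogonality invoked for $\Lambda_1$ is an \emph{unconditional} $L^{2}$ statement and does not persist after conditioning: given $\bW_1=\bw_1$, the two centered pieces $R(\bw_1,\cdot)-\E_{2}[R(\bw_1,\cdot)]$ and $\E[R\mid\bW_{2}]-\E R$ are in general correlated as functions of $\bW_{2}$, so the claimed inequality can fail. The fix---which is exactly what the paper does---is the elementary $(a-b)^{2}\leq 2a^{2}+2b^{2}$ applied to those two pieces. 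The first piece satisfies $\E[(R-\E[R\mid\bW_1])^{2}\mid\bW_1]\leq B^{4}B_U\,K_{V_j}(\bX_1,\bX_1)\leq C\,2^{jd}$ as you already argued, while the second is $\Var\!\big(L(\bW_2)\,\Pi_{V_j}(mg)(\bX_2)\big)\leq B^{2}B_U\|\Pi_{V_j}(mg)\|_{2}^{2}\leq C$, an $O(1)$ term that is harmless. With this adjustment your proof is complete.
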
	
	\begin{proof}
		First we control $\Lambda_1$. To this end, note that by simple calculations, using bounds on $L,g$, and orthonormality of $\psijkv$'s we have,
		\be
		\  \Lambda_1^2 &=\frac{n(n-1)}{2}\E \left(\left\{R^{*}\left(\bW_1,\bW_2\right)\right\}^2\right)\leq 3n(n-1)\E\left(R^2\left(\bW_1,\bW_2\right)\right)\\
		&=3n(n-1)\E \left(L^2\left(\bW_1\right)K_{V_j}^2\left(\bX_1,\bX_2\right)L^2\left(\bW_2\right)\right)\\&\leq 3n(n-1)B^4 \int \int \Big[\sumk \sumv \psijkv\left(\bx_1\right)\psijkv\left(\bx_2\right)\Big]^2 g(\bx_1)g(\bx_2)d\bx_1 d\bx_2\\
		& \leq 3n(n-1)B^4 B_U^2 \int \int \Big[\sumk \sumv \psijkv\left(\bx_1\right)\psijkv\left(\bx_2\right)\Big]^2 d\bx_1 d\bx_2\\&= 3n(n-1)B^4 B_U^2\sumk \sumv\int  \left(\psijkv\left(\bx_1\right)\right)^2d\bx_2\int  \left(\psijkv\left(\bx_2\right)\right)^2d\bx_2\\
		& \leq \constant n(n-1)2^{jd}.
		\ee
		Next we control
		$\Lambda_2=n\sup\left\{\E \left(R^*\left(\bW_1,\bW_2\right)\zeta\left(\bW_1\right)\xi\left(\bW_2\right)\right): \E \left(\zeta^2(\bW_1)\right)\leq 1, \E \left(\xi^2(\bW_2)\right)\leq 1\right\}.$
		
		To this end, we first control
		\be 
		\ & |\E\left(L(\bW_1)K_{V_j}\left(\bX_1,\bX_2\right)L(\bW_2)\zeta(\bW_1)\xi(\bW_2)\right)|\\
		&=|\int \int \E(L(\bW_1)\zeta(\bW_1)|\bX_1=\bx_1)K_{V_j}\left(\bx_1,\bx_2\right)\E(L(\bW_2)\xi(\bW_2)|\bX_2=\bx_2)g(\bx_2)g(\bx_2)d\bx_1 d\bx_2|\\
		&=|\int \E(L(\bW)\zeta(\bW)|\bX=\bx) \Pi\left(\E(L(\bW)\xi(\bW)|\bX=\bx)g(\bx)|V_j\right)g(\bx)d\bx|\\
		& \leq \left(\int \E^2(L(\bW)\zeta(\bW)|\bX=\bx)g^2(\bx)d\bx\right)^{\frac{1}{2}}\left(\int \Pi^2\left(\E(L(\bW)\xi(\bW)|\bX=\bx)g(\bx)|V_j\right)d\bx\right)^{\frac{1}{2}} \\
		& \leq \left(\int \E(L^2(\bW)\zeta^2(\bW)|\bX=\bx)g^2(\bx)d\bx\right)^{\frac{1}{2}}\left(\int \E(L^2(\bW)\xi^2(\bW)|\bX=\bx)g^2(\bx)d\bx \right)^{\frac{1}{2}}\\
		& \leq B^2 B_U \sqrt{\E(\zeta^2(\bW_1))\E (\xi^2(\bW_2))}\leq B^2 B_U
		\ee
		Above we have used Cauchy-Schwartz Inequality, Jensen's Inequality, and the fact that projections contract norm. Also,
		\be
		\ & |\E \left(\E \left(L(\bW_1)K_{V_j}\left(\bX_1,\bX_2\right)L(\bW_2)|\bW_1\right)\zeta(\bW_1)\xi(\bW_2)\right)|\\
		& = |\E \left[L(\bW_1)\Pi\left(\E \left(L(\bW_1)g(\bX_1)|\bX_1\right)|V_j\right)\zeta(\bW_1)\xi(\bW_2)\right]|\\
		& = |\E \left[L(\bW_1)\Pi\left(\E \left(L(\bW_1)g(\bX_1)|\bX_1\right)|V_j\right)\zeta(\bW_1)\right]||\E(\xi(\bW_2))|\\
		& \leq |\int \Pi(\E (L(\bW)\zeta(\bW)|\bX=\bx)g(\bx)|V_j)\Pi(\E (L(\bW)|\bX=\bx)g(\bx)|V_j)d\bx| \leq B^2 B_U,
		\ee
		where the last step once again uses contraction property of projection, Jensen's Inequality, and bounds on $L$ and $g$. Finally, by Cauchy-Schwartz Inequality and \eqref{eqn:expected_kernel},
		\be
		\E \left[\E \left(L(\bW_1)K_{V_j}\left(\bX_1,\bX_2\right)L(\bW_2)\right)\zeta(\bW_1)\xi(\bW_2)\right]& \leq \sumk \sumv \E^2 \left(L(\bW)\psijkv(\bX)\right)\leq \constant.
		\ee
		This completes the proof of $\Lambda_2 \leq \constant n$.
		Turning to $\Lambda_3=n \|\E\left[\left(R^*(\bW_1,\cdot)\right)^2\right]\|^{\frac{1}{2}}_{\infty}$ we have that
		\be 
		\left(R^*(\bW_1,\bw_2)\right)^2 & \leq 2 \left[R(\bW_1,\bw_2)-\E (R(\bW_1,\bW_2)|\bW_1)\right]^2+2\left[\E (R(\bW_1,\bW_2)|\bW_2=\bw_2)-\E \left(R(\bW_1,\bW_2)\right)\right]^2
		\ee
		Now,
		\be 
		\ & \E \left[R(\bW_1,\bw_2)-\E (R(\bW_1,\bW_2)|\bW_1)\right]^2 \\& \leq 2 \E \left(L^2(\bW_1)K^2_{V_j}\left(\bX_1,\bx_2\right)L^2(\bw_2)\right)
		+2\E \Big(\sumk\sumv L(\bW_1)\psijkv(\bX_1)\E \left(\psijkv(\bX_2)L(\bW_2)\right)\Big)^2\\
		& \leq 2B^4 B_U^2 \sumk\sumv \left(\psijkv(\bx_2)\right)^2+2\E (H^2(\bW_2))\leq \constant 2^{jd}.
		\ee
		where the last inequality follows from arguments along the line of \eqref{eqn:linear_term_uniform_bound}. Also, using inequalities \eqref{eqn:expected_kernel} and \eqref{eqn:linear_term_uniform_bound}
		\be
		\ & \left[\E (R(\bW_1,\bW_2)|\bW_2=\bw_2)-\E \left(R(\bW_1,\bW_2)\right)\right]^2\\&=\Big[\sumk \sumv \E \left(L(\bW_1)\psijkv(\bX_1)\right)\left(\E\left(L(\bW_1)\psijkv(\bX_1)\right)-\psijkv(\bx_2)L(\bw_2)\right)\Big]^2\leq \constant.
		\ee
		This completes the proof of controlling $\Lambda_3$. Finally, using compactness of the wavelet basis,
		\be
		\|R(\cdot,\cdot)\|_{\infty}& \leq B^2 \sup_{\bx_1,\bx_2}\sumk \sumv |\psijkv(\bx_1)||\psijkv(\bx_2)|\leq \constant 2^{jd}
		\ee
		Combining this with arguments similar to those leading to \eqref{eqn:linear_term_uniform_bound}, we have $\Lambda_4\leq \constant 2^{jd}$.
	\end{proof}
	Therefore, using Lemma \ref{lemma_ustat_tail} and Lemma \ref{lemma_lamda_estimates} we have 
	\be 
	\P\Big(|T_2|\geq \frac{\constant}{n-1}\Big(\sqrt{2^{jd}t}+t+\sqrt{\frac{2^{jd}}{n}}t^{\frac{3}{2}}+\frac{2^{jd}}{n}t^2\Big)\Big)& \leq 6 e^{-t}.
	\ee
	Finally using $2t^{\frac{3}{2}}\leq t+t^2$ we have,
	\be
	\Pf\left[|T_2|> a_1\sqrt{t}+a_2t+a_3 t^2\right]& \leq 6 e^{-t} \label{ustat_quadratic_term_bound}
	\ee
	where $a_1= \frac{\constant}{n-1}2^{\frac{jd}{2}}$, $a_2=\frac{\constant}{n-1}\left(\sqrt{\frac{2^{jd}}{n}}+1\right)$, $a_3=\frac{\constant}{n-1}\left(\sqrt{\frac{2^{jd}}{n}}+\frac{2^{jd}}{n}\right)$. Now if $h(t)$ is such that $a_1\sqrt{h(t)}+a_2 h(t)+a_3h^2(t)\leq t$, then one has by \eqref{ustat_quadratic_term_bound},
	\be
	\mathbb{P}\left[|T_2|\geq t\right]& \leq \mathbb{P}\left[|T_2|\geq a_1\sqrt{h(t)}+a_2 h(t)+a_3h^2(t)\right]\leq 6 e^{-6h(t)}.
	\ee
	Indeed, there exists such an $h(t)$ such that $h(t)=b_1 t^2 \wedge b_2 t \wedge b_3 \sqrt{t}$ where $b_1=\frac{\constant}{a_1^2}$, $b_2=\frac{\constant}{a_2}$, and $b_3=\frac{\constant}{\sqrt{a_3}}$. Therefore, there exists $C=\constant$ such that 
	\be 
	\mathbb{P}\left[|T_2|\geq t\right]\leq e^{-\frac{C t^2}{a_1^2}}+e^{-\frac{C t}{a_2}}+e^{-\frac{C \sqrt{t}}{\sqrt{a_3}}}. \label{ustat_quadratic_term_bound_use}
	\ee
	
	\subsubsection{Combining Bounds on $T_1$ and $T_2$} Applying union bound along with \ref{ustat_linear_term_exponential_bound} and \ref{ustat_quadratic_term_bound_use} completes the proof of Lemma \ref{lemma_ustat_tail_use}.
\end{proof}

\section{Remaining Technical Details for Adaptive Estimation}\label{section_appendix_adaptive_estimation}
%\subsection{Concentration of Empirical Wavelet Ceofficients}
%	\begin{lemma}\label{lemma_empirical_wavelet_coefficients}
%	Fix $j\in \mathbb{N}$, $k \in \mathcal{Z}_l$, $v \in \{0,1\}^d$, and for $(j^{'},l^{'})\in [\jmin,\jmax]\times [\lmin,\lmax]$ let 
%	$$\hat{\alpha}^{\lprime}_{j,k,v}:=\langle \ghat_{\lprime},\psi^{(v)}_{j,k} \rangle $$
%	$$\hat{\beta}^{\jprime}_{j,k,v}:=\langle \fhat_{\jprime},\psi^{(v)}_{j,k} \rangle $$
%	be the empirical wavelet coefficients of $\ghat_{\lprime}$ and $\fhat_{\jprime}$ respectively. Then 
	
%	$$\hat{\alpha}^{\lprime}_{j,k,v}=0\quad,\quad \hat{\beta}^{\jprime}_{j,k,v}=0$$
	
%	whenever $\max\{\jprime,\lprime\}< j$. Also, when $\min\{\jprime,\lprime\}\geq j$, for any $\delta\geq 1$ there exists a $C$ large enough (depending on $\max\{\|\psi_{0,0}^{0}\|_{\infty},\|\psi_{0,0}^{1}\|_{\infty}\}$, $M$, $B_U$, and $\gmax$) such that
%	\be
%	\P\left(\left|\hat{\alpha}^{\lprime}_{j,k,v}-\alpha_{j,k,v}\right|>C\delta\sqrt{\frac{jd}{n}}\right)&\leq 2^{-jd\delta}
%	\ee 
%	\be
%	\P\left(\left|\hat{\beta}^{\jprime}_{j,k,v}-\beta_{j,k,v}\right|>C\delta\max\left\{\sqrt{\frac{jd}{n}},n^{-\frac{\gamma}{2\gamma+d}}\right\}\right)&\leq 2^{-jd\delta}
%	\ee 
%	where $\alpha_{j,k,v}=\langle g,\psi^{(v)}_{j,k} \rangle$, $\beta_{j,k,v}=\langle f,\psi^{(v)}_{j,k} \rangle$ are the population wavelet coefficients of $g$ and $f$ respectively.
%	\end{lemma}
%\begin{proof}
%The concentration for wavelet coefficients of $\ghat_j$ can be obtained following Equation (17) in \cite{donoho1996density}.
%\end{proof}
\subsection{Proof of Lemma \ref{lemma_adaptive_gtilde}}
\label{sec:proof_lemma_adaptive_gtilde}
To analyze the estimator $\gtilde$, we begin with standard bias variance analysis for the candidate estimators $\ghat_l$.

Note that for any $\bx \in [0,1]^d$, using standard facts about compactly supported wavelet basis having regularity larger than $\gamma_{\max}$ \citep{hardle1998wavelets}, one has for a constant $C_1$ depending only on the wavelet basis used,
\be
\|\E_P\left(\ghat_l\right)-g\|_2^2&=\|\Pi\left(g|V_l\right)-g\|_2^2\leq C_1^2 M'^2 2^{-2ld\frac{\g}{d}}. \label{eqn:qbias_lepski_density}
\ee
Above we have used the fact that 
\be
\sup\limits_{h \in \besov^{\g}(M)}\|h-\Pi(h|V_l)\|_{2}\leq C_1 M' 2^{-l\gamma}. \label{eqn:holder_approx}
\ee
Also by Rosenthal's Inequality \citep{petrov1995limit}, there exists a constant $C(q)$ for $q\geq 2$ such that
\be
\ & \E_P\left(|\ghat_l(\bx)-\E_P\left(\ghat_l(\bx)\right)|^q\right)\\
&\leq \frac{C(q)}{n^q}\Big[\sum_{i=n+1}^{2n}\E_P\left(|K_{V_l}(\bx_i,\bx)|^q\right)+\Big(\sum_{i=n+1}^{2n}\E_P\Big(|K_{V_l}(\bx_i,\bx)|^2\Big)\Big)^{q/2}\Big]\\
& \leq \frac{C_2^q/2}{n^q}\times \Big[n\left(2^{ld}\right)^{q-1}+n^{q/2}\left(2^{ld}\right)^{q/2}\Big], 
\ee
where the last inequality follows using standard facts about compactly supported wavelet basis having regularity larger than $\gamma_{\max}$ \citep{hardle1998wavelets} with a constant $C_2$ that depends only on $q$ and the wavelet basis used. Therefore, for $q \geq 2$, by the choice of $l \in \mathcal{T}_2$, we have that for all $\bx \in [0,1]^d$, 
\be
\E_P\left(|\ghat_l(\bx)-\E_P\left(\ghat_l(\bx)\right)|^q\right)&\leq C_2^q\Big(\frac{2^{ld}}{n}\Big)^{q/2}.\label{eqn:qvariance_lepski_density}
\ee
Therefore, we have the following bias-variance  decomposition.
\be 
\ & \E_P\left(\|\ghat_l-g\||_2^2\right)=\int\E_P\left(|\ghat_l(\bx)-g(\bx)|^2\right)d\bx\\
&= \Big[\int\E_P\left(|\ghat_l(\bx)-\E_P\left(\ghat_l(\bx)\right)|^2\right)d\bx+\int\E_P\left(|\E_P\left(\ghat_l(\bx)\right)-g(\bx)|^2\right)d\bx\Big]\\
& \leq C_1^2 M'^2 2^{-2l\gamma}+C_2^2\left(\frac{2^{ld}}{n}\right)\label{eqn:qthmomentlepski_g}
\ee

Let $\lstar=\min\left\{l:C_1 M' 2^{-l\gamma}\leq C_2\sqrt{\frac{2^{ld}}{n}} \right\}$. This implies that
\be
\left\|\E_P\left(\ghat_{\lstar}\right)-g\right\|_2^2\leq C_1^2 M'^2 2^{-2\lstar  \gamma}\leq C_2^2\Big(\sqrt{\frac{2^{\lstar d}}{n}}\Big)^2\leq 2^dC_2^2  \Big(\frac{C_1}{C_2}M'\Big)^{\frac{2d}{2\gamma+d}}n^{-\frac{2\gamma}{2\gamma+d}}
\ee

Therefore, by definition of $\lhat$ and $\lstar$,
\be
\ &\E_P\left(\|\gtilde-g\|_2^2\I\left(\lhat \leq \lstar\right)\right)
=\E_{P,2}\left(\|\gtilde-g\|_2^2\I\left(\lhat \leq \lstar\right)\right)\\
&\leq  2\E_{P,2}\left(\|\gtilde-\ghat_{\lstar}\|_2^2\I\left(\lhat \leq \lstar\right)\right)+2 \E_{P,2}\left(\|\ghat_{\lstar}-g\|_2^2\right)\\
%& \leq 2\left[C^*\left(\frac{2^{\lstar d}}{n}\right)+2\times 2\left(\frac{2^{\lstar d}}{n}\right)\right]\\
&\leq 2^{d+1}\left((C^*)^2+2\right)C_2^2  \Big(\frac{C_1}{C_2}M'\Big)^{\frac{2d}{2\gamma+d}}n^{-\frac{2\gamma}{2\gamma+d}}. \label{eqn:lessthanlstar}
\ee

Using Cauchy-Schwarz inequality, we have, 
\be
\E_P\left(\|\gtilde-g\|_2^2\I\left(\lhat > \lstar\right)\right)& \leq \sum\limits_{l=\lstar}^{\jmax}\sqrt{\E_{P,2} \left(\|\ghat_l-g\|_2^{4}\right)}\sqrt{\mathbb{P}_{P,2}\left(\lhat=l\right)}. \label{eqn:wrongchoicelepski_g}
\ee
Now, by \eqref{eqn:qbias_lepski_density}, \eqref{eqn:qvariance_lepski_density}, choice of $l \in \mathcal{T}_2$, and Jensen's Inequality
\be 
\E_{P,2} \left(\|\ghat_l-g\|_2^{4}\right)& = \E_{P,2}\left(\int |\ghat_l(\bx)-g(\bx)|^2 d\bx\right)^2
 \leq \E_{P,2}\int |\ghat_l(\bx)-g(\bx)|^{4} d\bx\\
& \leq C_1^{4}M'^{4}2^{-4l\gamma}+C_2^{4}\left(\frac{2^{ld}}{n}\right)^{2}
 \leq C_1^{4}M'^{4}+C_2^{4}.
\ee

%We first control $\E_{P,2} \left(\|\gtilde_l-g\|_q^{2q}\right)$ uniformly over data generating schemes. 
%\be
%\E_{P,2} \left(\|\gtilde_l-g\|_q^{2q}\right) & \leq 2^{2q-1} \left[\begin{array}{c}\int \E_{P,2} \left(\gtilde_l(\bx)-\E_{P,2} \left(\gtilde_l(\bx)\right)\right)^{2q} d\bx\\+\int\left(\E_{P,2} \left(\gtilde_l(\bx)\right)-g(\bx) \right)^{2q} d\bx\end{array}\right]
%\ee
%Now note that by standard arguments for compactly supported wavelet bases one has $|\E \left(\gtilde_l(\bx)\right)|=|\Pi\left(g|V_l\right)(\bx)|\leq C(M,B_U, \gmax)$ for all $\bx \in [0,1]^d$. Also, by Rosenthal's Inequality followed by Jensen's Inequality,
%\be 
%\ &\E_{P,2} \left(\gtilde_l(\bx)-\E_{P,2} \left(\gtilde_l(\bx)\right)\right)^{2q}  \\&\leq C \frac{1}{n^8}\int  \left[\begin{array}{c}\sum_{i=1}^n\E_{P,2} \left(K_{V_l}\left(\bX_i,\bx\right)\right)^{2q}\\+\left\{\sum_{i=1}^n \E_{P,2} \left(K_{V_l}\left(\bX_i,\bx\right)\right)^2 \right\}^q\end{array}\right]\\
%& \leq  C(M,B_U, \gmax) \left[\left(\frac{2^{ld}}{n}\right)^{2q-1}+\left(\frac{2^{ld}}{n}\right)^q\right]\leq  C(M,B_U, \gmax) \label{eqn:fourthmomentlepski_g}
%\ee
%where for controlling higher order moments of projection kernel we have used standard properties of compactly supported wavelet bases, and finally in the last inequality of the above display, we have used our choice of $2^{\lmax d}$. 
Next, note that for $l>\lstar$,
\be
\ & \mathbb{P}_{P,2}\left(\lhat=l\right)  \leq \sum_{l>\lstar}\mathbb{P}_{P,2}\Big(\|\ghat_l-\ghat_{\lstar}\|_2> C^*\sqrt{\frac{2^{ld}}{n}}\Big)\\
&\leq \sum_{l>\lstar}\left\{\begin{array}{c}\mathbb{P}_{P,2}\left(\|\ghat_{\lstar}-\E_{P,2}\left(\ghat_{\lstar}\right)\|_2> \frac{C^*}{2}\sqrt{\frac{2^{ld}}{n}}-\|\E_{P,2}\left(\ghat_{\lstar}\right)-\E_{P,2}\left(\ghat_{l}\right)\|_2\right)\\+\mathbb{P}_{P,2}\left(\|\ghat_l-\E_{P,2} \left(\ghat_{l}\right)\|_2> \frac{C^*}{2}\sqrt{\frac{2^{ld}}{n}}\right)\end{array}\right\}\\
& \leq \sum_{l>\lstar}\left\{ \begin{array}{c}\mathbb{P}_{P,2}\left(\|\ghat_{\lstar}-\E_{P,2}\left(\ghat_{\lstar}\right)\|_2> \frac{C^*}{2}\sqrt{\frac{2^{ld}}{n}}-\|\Pi\left(g|V_{\lstar}\right)-\Pi\left(g|V_{l}\right)\|_2\right)\\+\mathbb{P}\left(\|\ghat_l-\E_{P,2} \left(\ghat_{l}\right)\|_2> \frac{C^*}{2}\sqrt{\frac{2^{ld}}{n}}\right)\end{array}\right\}\\
& \leq \sum_{l>\lstar}\left\{ \begin{array}{c}\mathbb{P}_{P,2}\left(\|\ghat_{\lstar}-\E_{P,2}\left(\ghat_{\lstar}\right)\|_2> \frac{C^*}{2}\sqrt{\frac{2^{ld}}{n}}-2C_2\sqrt{\frac{2^{l^*d}}{n}}\right)\\+\mathbb{P}\left(\|\ghat_l-\E_{P,2} \left(\ghat_{l}\right)\|_2> \frac{C^*}{2}\sqrt{\frac{2^{ld}}{n}}\right)\end{array}\right\}\\
& \leq \sum_{l>\lstar}\left\{ \begin{array}{c}\mathbb{P}_{P,2}\left(\|\ghat_{\lstar}-\E_{P,2}\left(\ghat_{\lstar}\right)\|_2> (\frac{C^*}{2}-2C_2)\sqrt{\frac{2^{ld}}{n}}\right)\\+\mathbb{P}\left(\|\ghat_l-\E_{P,2} \left(\ghat_{l}\right)\|_2> \frac{C^*}{2}\sqrt{\frac{2^{ld}}{n}}\right)\end{array}\right\}
\leq \sum_{l>l^*}2e^{-C2^{ld/2}}, \label{eqn:wrongchoiceproblepski_g}
\ee
for a $C>0$ (depending on $B_U$ and the wavelet basis choice) if $C^*$ is chosen large enough (depending on $M'$ and $B_U$) such that $C^*>2C_2$. In the fourth and fifth of the above series of inequalities, we have used \eqref{eqn:holder_approx} and the definition of $\lstar$ respectively.
The last line follows by  an argument similar to results in Section 3.1 of \cite{gine2011rates}. Finally combining equations \eqref{eqn:lessthanlstar}, \eqref{eqn:wrongchoicelepski_g} and \eqref{eqn:wrongchoiceproblepski_g}, we have the existence of an estimator $\gtilde$ depending on $B_U$, $\gmin$, and $\gmax$, such that for every $(\beta,\gamma) \in [\betamin,\betamax]\times [\gmin,\gmax]$,
$$\sup\limits_{P \in \mathcal{P}(\beta,\gamma)} \E_P\|\gtilde-g\|_2^2 \leq  Cn^{-\frac{2\gamma}{2\gamma+d}},$$ 
with a large enough positive constant $C$ depending on $M,B_U,\gmin$. %Closely following the just completed argument also yields a proof of the fact that for large enough positive constant $C$ depending $M$, $B_U$ and $\gmax$ such that
%	 $$\sup\limits_{P \in \mathcal{P}(\beta,\gamma)}\P_P\left( \gtilde\notin H(\g,C)\right)\leq \sum_{l>l^*}2e^{-2^{ld}/C}.$$
%To see this it is enough to identify that $\lhat\leq \lstar$ with the desired high probability and that the wavelet coefficients of $\gtilde_l$ are uniformly bounded by some $C(M,B_U,\gmax)$.
\subsection{Proof of Lemma \ref{lemma:density_truncation}}
\label{proof_density_truncation}
We will utilize the equivalent definition of Besov space in terms of moduli of smoothness. We define the forward difference operator $\Delta_h(f)(x) = f(x+h) - f(x)$ and the operator $\Delta_h^r = \Delta_h ( \Delta_h^{r-1})$ for $r\geq 2$, where $\Delta_h^1 = \Delta$. Next, for $t >0$ and $r$ a natural number greater than $\beta$, we define the modulus of smoothness 
$\omega_r(f, t) = \sup_{|h| \leq t} \| \Delta_h^r(f) \|_2$. 
Finally, we define the Besov semi-norm $|f|_{\besovb}= \sup_{t >0} \omega_r(f,t)/ t^\beta$.  Finally, we define
\be
\besovb(M) = \{ f \in L^2 : \|f \|_{\besovb}= \|f \|_2 + |f|_{\besovb} \leq M \}. \label{eq:besov_equiv}
\ee
It is a standard fact \citep{hardle1998wavelets} that \eqref{eq:besov_equiv} is an equivalent definition of a Besov space. Further, the supremum in the definition of $|f|_{\besovb}$ may be restricted to $0<t<1$. Throughout this proof, we work with $\besovb(M)$ defined by \eqref{eq:besov_equiv} without loss of generality. 
We first consider the case when $0 < \beta <1$. In this case, it is easy to see that $\| \phi(f) \|_2 < C(\phi)$, for some universal constant $C(\phi)$ depending on $\phi$ and independent of $f$. Next, we control the term $|\phi(f) |_{\besovb}$.  Using Mean Value Theorem, we have, 
\begin{align}
\Delta_h(\phi(f))(x) = \phi(f(x+h)) - \phi(f(x)) = \phi'(\xi) \Delta_h(f)(x), \nonumber 
\end{align}
for some $\xi \in [\min\{f(x), f(x+h)\}, \max\{f(x),f(x+h)\}]$. This naturally implies 
$\omega_1(\phi(f), t) \leq \|\phi \|_{\infty} \omega_1(f,t)$, which gives us the desired claim in this case.

Next, we consider the case when $\beta >1$. We note that for any $r\geq 1$, we have,
$\Delta_h^r(f)(x) = \sum_{k=0}^{r} {r \choose k} (-1)^{r-k} f(x+ kh)$. Setting $r= \lceil \beta \rceil$, we have, by Taylor expansion for $\phi$, 
\begin{align}
\Delta_h^r(\phi(f))(x) &= \sum_{k=0}^{r} {r \choose k } (-1)^{r-k} \phi(f(x+kh))  \nonumber \\
&= \sum_{k=0}^{r} {r \choose k} (-1)^{r-k} \Big[\phi'(f(x)) \Delta_{kh}(f)(x) + \frac{\phi''(\xi(x))}{2} (\Delta_{kh}(f)(x))^2\Big] \nonumber \\
&= \phi'(f(x)) \Delta_h^r(\phi(f))(x) + \Sigma(x,h). \nonumber 
\end{align}
Thus we have, $\| \Delta_h^r(\phi(f)) \|_2 \leq \|\phi' \|_{\infty} \|\Delta_h^r(\phi(f))\|_2 + \| \Sigma(\cdot, h)\|_2$. To control $\|\Sigma\|_2$, we use the fact that $\besovb (M) \subset B_{\infty,\infty}^{\beta- 1/2}(C(M,\beta))$, where $\subset$ stands for the usual embedding operation (results of similar flavor can be found in \cite{simon1990sobolev, triebel2006theory} ). This naturally implies that 
$ (\Delta_{kh}(f)(x))^2 \lesssim (kh)^{2\beta-1}$. Thus,we have, 
\begin{align}
\sup_{0<t<1} \frac{\sup_{|h| \leq t} \| \Sigma(\cdot, h)\|_2}{t^\beta} \leq C(M, \beta) t^{\beta -1}. \nonumber 
\end{align}
This completes the proof. 

\subsection{Proof of Lemma \ref{lemma_ghat_prob_bound}}
Indeed, $\ghat=\psi(\gtilde)$, where $\psi(x)$ is $C^{\infty}$ function which is identically equal to $x$ on $[B_L,B_U]$ and has universally  bounded first derivative. Therefore, it is enough to prove Lemma \ref{lemma_ghat_prob_bound} for $\gtilde$ instead of $\ghat$ and thereby invoking a simple first order Taylor series argument along with the fact that $\psi(g)\equiv g$ owing to the bounds on $g$. The proof of the lemma is therefore very similar to the proof of adaptivity of $\ghat$ (by dividing into cases where the chosen $\lhat$ is larger and smaller than $\lstar $ respectively and thereafter invoking Lemma \ref{lemma_linear_projection_tail_bound}) and therefore we simply state the main idea and omit the details.  The crux of the argument for proving Lemma \ref{lemma_ghat_prob_bound}  relies on the fact that by Lemma \ref{lemma_linear_projection_tail_bound}, any $\ghat_l$ for $l \in \mathcal{T}_2$ suitably concentrates around $g$ in a radius of the order of $\sqrt{\frac{2^{ld}}{n}}$, and Lepski's method chooses an index $\lhat \leq \lstar$ with high probability. Thereafter one uses the fact that $\gmin >\bmax$, and consequently $2^{ld}\ll 2^{jd}$ for any $(j,l)\in \mathcal{T}_1\times\mathcal{T}_2 $. 

\subsection{Proof of Lemma \ref{lemma:regression_lowerbound}}
\label{sec:regression_lowerbound}
	The proof will follow the usual approach of lower bounding the estimation error by a related ``testing" problem \citep{tsybakov2008introduction}. We will equip our parameter space with the distance function $d((f,g) , (f' , g')) = \sqrt{ \| f - f' \|_2^2 + \| g - g' \|_2^2 }$.
	
	We will use $M$ distributions in our derivation of the lower bound --- $M$ will be chosen appropriately later. 
	The distributions $\mathcal{C}=\{ (f_i , g_i) : 1\leq i \leq M \}$ are chosen as follows: we set $g_i =1$ for all $i$, that is,
	we set the design density to be uniform. Next, we set $j_0 = \lceil  \frac{ d}{2 \beta + d} \log_2 n \rceil$. Let 
	\begin{align}
		f_i(x) = \frac{1}{2} + \varepsilon 2^{- j_0 ( 1/2+\beta/d) } \sum_{ k \in \mathcal{Z}_{j_0}} \sum_{ v \in \{ 0, 1\}^d - \{0\} } \alpha_{i, k}^v \psi_{j_0, k}^v (x), \nonumber  
	\end{align}
	where each $\alpha_{i,k}^v \in \{0,1\}$. The constant $\varepsilon >0$ is chosen sufficiently small such that $0\leq f_i \leq 1$ for all $x \in [0,1]^d$.  Thus we have, for $(f,g) , (f', g') \in \mathcal{C}$, 
	\begin{align*}
		d((f,g), (f', g'))^2 = \| f - f' \|_2^2  = \varepsilon^2 \frac{1}{n} \sum_{ v \in \{0,1 \}^d - \{0\} } \rho ( \alpha_{i \cdot}^v , \alpha_{i' \cdot}^v ), 
	\end{align*}
	where $\alpha_{i \cdot}^v = (\alpha_{i,k}^v)$ and $\rho(\cdot, \cdot)$ is the Hamming distance between two vectors on the hypercube. For each $v \in \{0,1\}^d - \{0\}$, we apply the Varshamov-Gilbert Lemma ({Lemma 2.9 of \cite{tsybakov2008introduction}}) to select $(\alpha_{i, \cdot}^v)$ with mutual separation at least  $\frac{1}{8} n^{\frac{d}{2\beta + d}}$. The Varshamov-Gilbert Lemma guarantees the existence of such a subset with size at least $2^{\frac{1}{8}n^{\frac{d}{2 \beta +d }}}$. Thus we have, with $M= 2^{\frac{2^d -2}{8}n^{\frac{d}{2 \beta +d }}}$, for any $(f,g), (f',g') \in \mathcal{C}$, 
	\begin{align*}
		d((f,g), (f', g'))^2 \geq \frac{(2^d-2)\varepsilon^2}{8} n^{-\frac{2\beta}{2\beta + d}}. 
	\end{align*}
	We denote the joint distribution of $\{ \mathbf{x}_l, y_l : 1 \leq l \leq n\}$ under the parameters $(f_i, g_i)$ by $\mathcal{P}_i$.  Thus we have, $\chi^2 (\mathcal{P}_i, \mathcal{P}_0) = [1+ \chi^2((f_i, g_i), (f_0, g_0))]^n - 1$.
	
	Finally, we note that 
		$1+ \chi^2((f_i,g_i), (f_0, g_0)) = \E_0\Big[ \Big( \frac{f_i(\mathbf{x}_1)^{y_1} (1- f_i(\mathbf{x}_1))^{(1-y_1)}}{1/2}\Big)^2 \Big] 
		= 4 \E_0[ f_i^2(\mathbf{x}_1) + (1- f(\mathbf{x}_i))^2 ]$, 
	where $\E_0[\cdot]$ represents the expectation with respect to $(f_0, g_0)$. Setting $f_i = 1/2 + \psi_i$, we have,
	\begin{align*}
		1+ \chi^2((f_i,g_i), (f_0, g_0))  = 1 + 4 \E_0[\psi_i(\mathbf{x}_1)^2] \leq 1 + 4 (2^d-2)\varepsilon^2 n^{-\frac{2\beta}{2\beta +d }}. 
	\end{align*}
	Thus
$\chi^2 (\mathcal{P}_i, \mathcal{P}_0) \leq \exp(4 (2^d-2) \varepsilon^2 n^{\frac{d}{2\beta + d}}) \leq \delta M$,
	for some $0<\delta < 1/8$ if $\varepsilon>0$ is chosen sufficiently small. 
	This allows us to complete the proof by an application of Theorem 2.7 in \cite{tsybakov2008introduction}.

\end{document}